\colorlet{darkblue}{blue!55!black}
\colorlet{darkcyan}{cyan!50!black}
\colorlet{darkgreen}{green!60!black}
\def\eqref#1{\textcolor{darkblue}{(\ref{#1})}}
\newcounter{intro}
\newtheorem{introthm}[intro]{Theorem}
\theoremstyle{plain}
\newtheorem{theorem}{Theorem}[section]
\newtheorem{lemma}[theorem]{Lemma}
\newtheorem{corollary}[theorem]{Corollary}
\newtheorem{proposition}[theorem]{Proposition}
\theoremstyle{definition}
\newtheorem{construction}[theorem]{Construction}
\newtheorem{definition}[theorem]{Definition}
\newtheorem{example}[theorem]{Example}
\newtheorem{question}[theorem]{Question}
\newtheorem{remark}[theorem]{Remark}
\newtheorem*{ack}{Acknowledgements}
\numberwithin{equation}{section}
\numberwithin{theorem}{section}
\title[Descent, generation, coherent algebras]{Descent and generation for \\ noncommutative coherent algebras over schemes}
\author[T.~De Deyn]{Timothy De Deyn}
\address{T.~De Deyn,
School of Mathematics and Statistics,
University of Glasgow, 
Glasgow G12 8QQ,
United Kingdom}
\email{timothy.dedeyn@glasgow.ac.uk}
\author[P.~Lank]{Pat Lank}
\address{P.~Lank,
Dipartimento di Matematica “F. Enriques”, Universit\`{a} degli Studi di Milano, Via Cesare
Saldini 50, 20133 Milano, Italy}
\email{plankmathematics@gmail.com}
\author[K. ~Manali Rahul]{Kabeer Manali Rahul}
\address{K. ~Manali Rahul,
Center for Mathematics and its Applications, 
Mathematical Science Institute, Building 145, 
The Australian National University, 
Canberra, ACT 2601, Australia}
\email{kabeer.manalirahul@anu.edu.au}
\date{\today}
\keywords{derived categories, descent, (big) Rouquier dimension, noncommutative algebraic geometry, Azumaya algebras, Grothendieck topology}
\subjclass[2020]{14A22 (primary), 14A30, 14F08, 13D09} 
\begin{document}
    
\begin{abstract}
    Our work shows forms of descent, in the fppf, h and \'{e}tale topologies, for strong generation of the bounded derived category of a noncommutative coherent algebra over a scheme.
    Even for (commutative) schemes this yields new perspectives. 
    As a consequence we exhibit new examples where these bounded derived categories admit strong generators. 
    We achieve our main results by leveraging the action of the scheme on the coherent algebra, allowing us to lift statements into the noncommutative setting.
    In particular, this leads to interesting applications regarding generation for Azumaya algebras.
\end{abstract}

\maketitle
\tableofcontents

\section{Introduction}
\label{sec:intro}

Triangulated categories (and their enhancements) are ubiquitous in numerous branches of modern mathematics.
One of the first natural things to study is how these categories can be generated, under the naturally allowed operations, from a finite amount of data.
For example there are interesting algebro-geometric consequences to these types of `generation questions'.
These include, amongst others, closedness of the singular locus of a scheme \cite{Iyengar/Takahashi:2019, Dey/Lank:2024a}, characterizations of various singularities arising in birational geometry \cite{Lank/Venkatesh:2024,Lank/McDonald/Venkatesh:2025}, and a characterization of regularity for `nice enough' scheme \cite{Bondal/VandenBergh:2003,Neeman:2021b}.
Additionally, strong generation allows for representability statements of cohomological functors \cite{Bondal/VandenBergh:2003, Rouquier:2008,Neeman:2018b}.

In this work we investigate the \emph{strong} generation of triangulated categories arising naturally in (noncommutative) algebraic geometry.
There are a various approaches to `noncommutative algebraic geometry', and we refrain from spelling out any reasonable survey of the literature due to its breadth. 
However, the overarching theme is to use noncommutative structures to extract information from geometric objects or, the other way round, studying noncommutative objects using geometric means.
This area of mathematics has enjoyed a wide range of connections to other fields, including, for example, mathematical physics, symplectic geometry and representation theory. 

The approach adopted in the current work is the study of `mildly' noncommutative schemes: these are (usual, commutative) schemes equipped with noncommutative structure sheaves. 
That is, a \textit{(mild) noncommutative scheme} consists of a pair $(X,\mathcal{A})$ where $X$ is a scheme and $\mathcal{A}$ is a quasi-coherent algebra over $X$. 
We refer to \Cref{sec:nc_scheme} for some background.
Of particular interest, when $\mathcal{A}$ is coherent, is the associated \textit{bounded derived category} $D^b_{\operatorname{coh}}(\mathcal{A})$, i.e.\ the derived category of complexes with bounded and coherent cohomology.
We study its strong generation and how this behaves with respect to various topologies.

Before delving into our results, let us briefly recall some terminology. 
Consider a collection of objects $\mathcal{S}$ in a triangulated category $\mathcal{T}$. The smallest triangulated subcategory of $\mathcal{T}$ containing $\mathcal{S}$ and  closed under direct summands is denoted $\langle \mathcal{S} \rangle$ and is called the \textit{thick closure} of $\mathcal{S}$. Any object in $\langle \mathcal{S} \rangle$ can be \textit{finitely built} from $\mathcal{S}$ using only a finite number of shifts, cones and direct summands.
When we want to restrict the number of cones used, we write $\langle \mathcal{S} \rangle_{n+1}$ for the subcategory of objects in $\mathcal{T}$ which can be finitely built from $\mathcal{S}$ using finite coproducs, direct summands, shifts, and \textit{at most} $n$ cones. An object $G$ in $\mathcal{T}$ is called a \textit{strong generator} if $\langle G \rangle_{n+1} = \mathcal{T}$ for some $n\geq 0$, and the smallest such $n$ is called the \textit{Rouquier dimension} of $\mathcal{T}$.
See \Cref{sec:generation} for more details.

\subsection{Descent for Grothendieck topologies}
\label{sec:intro_descent}
Roughly descent describes how to obtain global statements by `glueing' local statements. 
The primordial form of descent in algebraic geometry goes by lifting properties from an (affine) open cover, i.e.\ the Zariski topology. 
In fact, this is how a lot of properties of schemes are defined.

When it comes to generation this has been very fruitful for finding compact generators by reducing Zariski locally \cite{Neeman:1996,Bondal/VandenBergh:2003,Rouquier:2008} and more recently fppf locally \cite{Toen:2012,Hall/Rydh:2017}.
Our first result below shows Zariski descent for the existence of strong generators in bounded derived categories and is a noncommutative generalization of \cite[Theorem D]{Lank:2024}.
It yields new examples of strong generators existing in the noncommutative setting (see e.g.\ \Cref{ex:els_global}).

\begin{introthm}[see \Cref{thm:nc_local_to_global_affine_cover}]\label{introthm:nc_local_to_global_affine_cover}
    Let $X$ be a separated Noetherian scheme and $\mathcal{A}$ a coherent $\mathcal{O}_X$-algebra. 
    The following are equivalent
    \begin{enumerate}
        \item $D^b_{\operatorname{coh}}(\mathcal{A})$ admits a strong generator,
        \item $D^b_{\operatorname{coh}}(\mathcal{A}|_U)$ admits strong generator for each affine open $U\subseteq X$.
    \end{enumerate}
\end{introthm}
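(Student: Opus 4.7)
My plan is to address the two directions separately, with $(2)\Rightarrow(1)$ being the substantive one. For $(1)\Rightarrow(2)$, if $\langle G\rangle_{d+1}=D^b_{\operatorname{coh}}(\mathcal{A})$ for some $G$, I would show that $G|_U$ strongly generates $D^b_{\operatorname{coh}}(\mathcal{A}|_U)$ with the same cone count $d$. The only nontrivial ingredient is a noncommutative extension lemma: every object of $D^b_{\operatorname{coh}}(\mathcal{A}|_U)$ is the restriction of some object of $D^b_{\operatorname{coh}}(\mathcal{A})$. This should follow from the classical Nagata-type extension for coherent $\mathcal{O}_X$-modules applied cohomology-sheaf-wise, taking care that the $\mathcal{A}$-action is retained; here one uses that $j_*$ for the open immersion $j\colon U\hookrightarrow X$ preserves the $\mathcal{A}$-action and that coherent $\mathcal{A}$-modules are closed under the relevant finite-type subobject operations, followed by a truncation argument to reassemble a bounded complex.

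For $(2)\Rightarrow(1)$, I would induct on the minimal size $n$ of a finite affine open cover of $X$ (finite by quasi-compactness of Noetherian schemes). The base $n=1$ is immediate. For the inductive step, write $X=W\cup V$ with $W$ one of the affines in an optimal cover and $V$ the union of the others; separatedness of $X$ forces each intersection $W\cap U_i$ to be affine, so $W\cap V$ admits an affine cover of size $n-1$, and the hypothesis of (2) descends to both $V$ and $W\cap V$. The inductive hypothesis then furnishes strong generators for $D^b_{\operatorname{coh}}(\mathcal{A}|_V)$ and $D^b_{\operatorname{coh}}(\mathcal{A}|_{W\cap V})$, so the proof reduces to a two-piece gluing claim.

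The gluing claim is the crux: strong generation on $W$, $V$, and $W\cap V$ implies it on $X=W\cup V$. Choose strong generators $G_W,G_V$, lift them to coherent $\mathcal{A}$-modules $\widetilde{G}_W,\widetilde{G}_V$ on $X$ via the extension lemma above, and take $\widetilde{G}_W\oplus\widetilde{G}_V$ as the candidate. For any $F\in D^b_{\operatorname{coh}}(\mathcal{A})$, the Mayer--Vietoris triangle
\[
F\longrightarrow Rj_{W*}(F|_W)\oplus Rj_{V*}(F|_V)\longrightarrow Rj_{(W\cap V)*}(F|_{W\cap V})\longrightarrow F[1]
\]
expresses $F$ in terms of data on the three opens, where strong generation is already under control.

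The main obstacle is that these derived pushforwards are typically not coherent, so one needs uniform Rouquier-type truncation bounds to approximate them by coherent objects with a controlled cone count. This is precisely where the abstract's theme of \emph{leveraging the $\mathcal{O}_X$-action on $\mathcal{A}$} should enter: by forgetting the $\mathcal{A}$-structure one can import the necessary truncation estimates from the commutative setting of \cite{Lank:2024}, and then transport them back to the $\mathcal{A}$-linear category via extension of scalars along $\mathcal{O}_X\to\mathcal{A}$. Assembling these estimates into a single uniform cone bound for $\widetilde{G}_W\oplus\widetilde{G}_V$ is the most technical step.
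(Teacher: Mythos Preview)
Your overall architecture matches the paper's: the induction on the size of an affine cover, the Verdier-localization/extension lemma for $(1)\Rightarrow(2)$, and Mayer--Vietoris for the two-piece gluing in $(2)\Rightarrow(1)$ are exactly how \cite[Theorem~D]{Lank:2024} proceeds, and the paper follows that template verbatim.

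The genuine gap is your proposed mechanism for the coherence obstacle. ``Forget the $\mathcal{A}$-structure, import commutative truncation estimates, then transport back via extension of scalars along $\mathcal{O}_X\to\mathcal{A}$'' does not work: applying $\pi^*=(-)\otimes_{\mathcal{O}_X}\mathcal{A}$ to the underlying $\mathcal{O}_X$-module $\pi_*M$ of an $\mathcal{A}$-module $M$ produces $\pi^*\pi_*M$, and the counit $\pi^*\pi_*M\to M$ is not split in general, so a cone bound on $\pi_*M$ cannot be lifted to one on $M$. The correct device---and the one the paper actually uses---is the \emph{tensor action} $D_{\operatorname{qc}}(X)\times D_{\operatorname{qc}}(\mathcal{A})\to D_{\operatorname{qc}}(\mathcal{A})$ combined with the projection formula for the open immersion (\Cref{prop:strict_form_nc_projection_formula}): this identifies $\mathbb{R}\breve{j}_*\breve{j}^*E'\cong \mathbb{R}j_*\mathcal{O}_U\otimes^{\mathbb{L}}_{\mathcal{O}_X}E'$ \emph{as $\mathcal{A}$-modules}, so Neeman's commutative bound $\mathbb{R}j_*\mathcal{O}_U\in\overline{\langle G\rangle}_n$ (for $G$ a classical generator of $\operatorname{Perf}(X)$) acts directly on the $\mathcal{A}$-side to give $\mathbb{R}\breve{j}_*E\in\overline{\langle G\otimes^{\mathbb{L}}E'\rangle}_n$ with $G\otimes^{\mathbb{L}}E'\in D^b_{\operatorname{coh}}(\mathcal{A})$ and $n$ independent of $E$. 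This is \Cref{prop:nc_coherent_boundedness}, the key noncommutative substitution. You also gloss over the final passage from a bound in $\overline{\langle\,\cdot\,\rangle}_N\subseteq D_{\operatorname{qc}}(\mathcal{A})$ back to one in $\langle\,\cdot\,\rangle_N\subseteq D^b_{\operatorname{coh}}(\mathcal{A})$; the paper invokes \cite[Corollary~4.4]{DeDeyn/Lank/ManaliRahul:2024} for this step.
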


Life being life, it is often more convenient, and natural, to allow for arbitrary direct sums when constructing generators.
This was a key insight in \cite{Aoki:2021, Neeman:2021b}. 
Recall that an object $G$ in a triangulated category $\mathcal{T}$ admitting small coproducts is said to be a \textit{strong $\oplus$-generator} if any objects in $\mathcal{T}$ can be obtained from $G$ using arbitrary small coproducts, direct summands, shifts, and \textit{at most} $n$ cones, see also \Cref{sec:generation}. 
The following shows that one can check the existence of strong $\oplus$-generators locally in various topologies (see \Cref{sec:h-top} for a brief recollection of the h topology for Noetherian schemes, and \cite[\href{https://stacks.math.columbia.edu/tag/021L}{Tag 021L}]{stacks-project} for the fppf topology).

\begin{introthm}[see \Cref{thm:nc_local_to_global_h_cover_big}]\label{introthm:nc_local_to_global_h_cover_big}
    Let $X$ be a separated Noetherian scheme and $\mathcal{A}$ a coherent $\mathcal{O}_X$-algebra. The following are equivalent
    \begin{enumerate}
        \item $D_{\operatorname{qc}}(\mathcal{A})$ admits a strong $\oplus$-generator with bounded and coherent cohomology,
        \item\label{item2} there exists an fppf covering $\{ f_i \colon X_i \to X \}_{i}$ with, for each $i$, $D_{\operatorname{qc}}(f_i^\ast \mathcal{A})$ admitting a strong $\oplus$-generator with bounded and coherent cohomology.
    \end{enumerate}
    Additionally, if $\mathcal{A}$ is flat over $X$, then the above are also equivalent to
    \begin{enumerate}[resume]
        \item\label the same as \eqref{item:hcover2} but for $\{ f_i \colon X_i \to X \}_{i}$ a (finite) h covering.
    \end{enumerate}
\end{introthm}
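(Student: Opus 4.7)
The forward implications (1)$\Rightarrow$(2) and (1)$\Rightarrow$(3) are the routine direction. If $G$ is a strong $\oplus$-generator of $D_{\operatorname{qc}}(\mathcal{A})$ with bounded coherent cohomology and $f\colon Y\to X$ is any flat morphism appearing in the cover, then $f^{\ast}G$ has bounded coherent cohomology (flat pullback is exact and preserves coherence), and its closure under small coproducts, summands, shifts and $n$ cones contains the whole essential image of $f^{\ast}$. Since $f^{\ast}\mathcal{A}$ lies in that image and localisingly generates $D_{\operatorname{qc}}(f^{\ast}\mathcal{A})$, one concludes that $f^{\ast}G$ (or $f^{\ast}G\oplus f^{\ast}\mathcal{A}$, to be safe) is a strong $\oplus$-generator of $D_{\operatorname{qc}}(f^{\ast}\mathcal{A})$.

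The substance is the converse. For (2)$\Rightarrow$(1), I would first use that $X$ is Noetherian, hence quasi-compact, to reduce to a single faithfully flat morphism $f\colon Y\to X$ of finite presentation by replacing the cover with a finite disjoint union of its members; let $G_Y$ be the resulting strong $\oplus$-generator on $Y$. The plan is to descend along the \v{C}ech/Amitsur cosimplicial object
\[
F\longrightarrow Rf_{(0)\ast}f_{(0)}^{\ast}F\longrightarrow Rf_{(1)\ast}f_{(1)}^{\ast}F\longrightarrow Rf_{(2)\ast}f_{(2)}^{\ast}F\longrightarrow\cdots,
\]
where $f_{(k)}\colon Y^{\times_X(k+1)}\to X$, whose totalisation recovers $F$ by faithfully flat descent. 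Because $f$ is of finite Tor dimension and its iterated self-products remain fppf of bounded relative dimension over the Noetherian $X$, only boundedly many terms should matter up to a uniform number of cones, so the candidate strong $\oplus$-generator of $D_{\operatorname{qc}}(\mathcal{A})$ is assembled from a finite truncation of this complex applied to $G_Y$ (together with $\mathcal{A}$ itself).

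The main obstacle is the bounded-coherent-cohomology constraint: $Rf_{\ast}$ does \emph{not} preserve coherence when $f$ is not proper, so $Rf_{\ast}G_Y$ need not lie in $D^b_{\operatorname{coh}}(\mathcal{A})$. The cleanest way around this, consistent with the paper's philosophy of leveraging the $\mathcal{O}_X$-action, is to invoke the noncommutative version of the Aoki--Neeman equivalence (cf.\ \cite{Aoki:2021, Neeman:2021b}) between the existence of a strong $\oplus$-generator of $D_{\operatorname{qc}}(\mathcal{A})$ with bounded coherent cohomology and the existence of a classical strong generator of $D^b_{\operatorname{coh}}(\mathcal{A})$. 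This recasts the problem entirely inside $D^b_{\operatorname{coh}}(\mathcal{A})$, where the \v{C}ech construction is replaced by genuine fppf descent for the bounded coherent category, dovetailing naturally with \Cref{introthm:nc_local_to_global_affine_cover} on the Zariski side.

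For (3), i.e.\ the h-topology version under the additional flatness hypothesis on $\mathcal{A}$, I would use the standard fact that any h covering of a Noetherian scheme admits a refinement factoring as a proper surjective morphism followed by an fppf covering; the fppf half is then handled by (2). For the proper surjective half, $Rf_{\ast}$ genuinely does preserve boundedness and coherence of cohomology, so $Rf_{\ast}G_Y$ (augmented by $\mathcal{A}$) is directly a sensible candidate and no Aoki--Neeman-style detour is needed. The flatness of $\mathcal{A}$ is what guarantees that $f^{\ast}\mathcal{A}$ is computed by ordinary pullback and that flat base change holds noncommutatively, so the pushforward calculation does not require derived structure on $\mathcal{A}$; the remaining task is a uniform bookkeeping for the number of cones in terms of the cohomological amplitude of $Rf_{\ast}$ on bounded coherent objects along the fixed proper map.
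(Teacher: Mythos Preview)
Your identification of the coherence obstruction is exactly right, but your proposed fix is a genuine gap. The ``Aoki--Neeman equivalence'' you invoke---that a strong generator of $D^b_{\operatorname{coh}}(\mathcal{A})$ yields a strong $\oplus$-generator of $D_{\operatorname{qc}}(\mathcal{A})$ with bounded coherent cohomology---is \emph{not} available: the paper explicitly raises this as an open question in its final section (see \Cref{question:nc_setup}), and even provides a non-Noetherian counterexample. The direction you need (small $\Rightarrow$ big) is precisely the unknown one, so your argument becomes circular. Moreover, your plan to ``recast the problem entirely inside $D^b_{\operatorname{coh}}(\mathcal{A})$'' would also require fppf descent for strong generators of $D^b_{\operatorname{coh}}$, which the paper does not prove; \Cref{introthm:nc_local_to_global_affine_cover} gives only Zariski descent.

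The paper's actual route stays entirely in the big category and resolves the coherence issue differently. It uses descendability of $\mathbb{R}f_\ast\mathcal{O}_Y$ for an h cover $f$ (Bhatt--Scholze) together with the projection formula for the $\mathcal{O}_X$-action to obtain $D_{\operatorname{qc}}(\mathcal{A})=\overline{\langle \mathbb{R}\breve{f}_\ast G\rangle}_{N}$ for the given generator $G$ upstairs---this is morally your \v{C}ech truncation, made precise via nilpotence of the fibre of $\mathcal{O}_X\to\mathbb{R}f_\ast\mathcal{O}_Y$. The crucial step you are missing is \emph{coherent boundedness of the pushforward}: for any finite-type morphism $f$ of separated Noetherian schemes and any $E\in D^b_{\operatorname{coh}}(\mathcal{A}_Y)$, one has $\mathbb{R}\breve{f}_\ast E\in\overline{\langle F\rangle}_n$ for some $F\in D^b_{\operatorname{coh}}(\mathcal{A})$. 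This is proved by factoring $f$ via Nagata compactification as an open immersion followed by a proper map, and handling the open immersion using Neeman's approximability result that $\mathbb{R}j_\ast\mathcal{O}_U\in\overline{\langle G\rangle}_n$ for a perfect generator $G$. Finally, note that the paper does not split the h case into proper-then-fppf as you suggest; it treats fppf and h uniformly through descendability (the flatness hypothesis on $\mathcal{A}$ or $f$ is needed only for the projection formula), and the forward implications are handled trivially by taking the identity as a cover rather than by pulling back a generator.
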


Generally speaking, within the current literature, constructing strong $\oplus$-generators for the `big' category that are bounded and coherent, or perfect, is `easier' than constructing strong generators for the respective `small' categories (the former always gives the latter).
Yet, in general one definitely cannot expect the converse---that strong generators for the small category yield strong $\oplus$-generators for the big category; we elaborate a bit on their relation in \Cref{sec:big_vs_small_elephant} at the end.
Regardless, this is the reason for considering results concerning the descent of strong $\oplus$-generators. 

Let us quickly mention the two key technical ingredients for the proof of \Cref{introthm:nc_local_to_global_h_cover_big}:
\begin{itemize}
    \item For any h cover $f\colon Y\to X$, the derived pushforward of the structure sheaf $\mathbb{R}f_\ast \mathcal{O}_Y$ is a `descendable object', in the sense \cite[Definition 3.18]{Mathew:2016}, in $D_{\operatorname{qc}}(X)$ (\cite[Definition 3.16]{Balmer:2016} calls these objects `nil-faithful'). 
    \item For any noncommutative scheme $(X,\mathcal{A})$ the natural tensor action
     \[
       (-\otimes^{\mathbb{L}}_{\mathcal{O}_X}-) \colon D(X)\times D(\mathcal{A})\to D(\mathcal{A})
    \]
    gives a `projection formula' for `strict' morphism (under some flatness assumptions), see \Cref{sec:lifting}.
\end{itemize}
Exploiting these two facts yields the result.
This is the same approach taken in \cite{Aoki:2021}, our main observation is that one can leverage the action of the central scheme $X$ on the noncommutative scheme $(X,\mathcal{A})$ to extend this result to the noncommutative setting. 

Furthermore, for the \'{e}tale topology (see \cite[\href{https://stacks.math.columbia.edu/tag/0214}{Tag 0214}]{stacks-project} for definitions) we obtain a stronger statement---the existence of bounded coherent strong $\oplus$-generators is a local question.
\begin{introthm}[see \Cref{thm:etale_cover_strong_oplus_bounded_coherent_exist}]\label{introthm:nc_local_to_global_etale_big}
    Let $X$ be a separated Noetherian scheme, $\mathcal{A}$ a coherent $\mathcal{O}_X$-algebra and fix an \'{e}tale covering $\{ f_i \colon X_i \to X \}_i$.
    The following are equivalent
    \begin{enumerate}
        \item $D_{\operatorname{qc}}(\mathcal{A})$ admits a strong $\oplus$-generator with bounded and coherent cohomology,
        \item $D_{\operatorname{qc}}(f_i^\ast\mathcal{A})$ admits a strong $\oplus$-generator with bounded and coherent cohomology for each $i$.
    \end{enumerate}
\end{introthm}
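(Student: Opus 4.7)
The direction $(2) \Rightarrow (1)$ is immediate from \Cref{introthm:nc_local_to_global_h_cover_big}, since every étale covering is in particular an fppf covering. For the converse $(1) \Rightarrow (2)$, the plan is to prove that \emph{étale pullback preserves bounded coherent strong $\oplus$-generators}: given such a generator $G$ of $D_{\operatorname{qc}}(\mathcal{A})$, one shows that $f_i^\ast G$ is one for $D_{\operatorname{qc}}(f_i^\ast\mathcal{A})$. That $f_i^\ast G$ has bounded coherent cohomology is automatic, since $f_i$ is flat of finite type; the content lies in the uniform cone bound.

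I would first reduce to the case $X_i$ affine by passing to an affine-open Zariski refinement of $X_i$ and applying \Cref{introthm:nc_local_to_global_h_cover_big} (a Zariski cover being in particular fppf). With $X_i$ affine, the morphism $f_i$ is separated, so the diagonal $\Delta_i\colon X_i \to X_i \times_X X_i$---already open by the unramifiedness of $f_i$---becomes a closed immersion as well, yielding a clopen decomposition
\[
X_i \times_X X_i \;=\; \Delta_i(X_i) \,\sqcup\, W_i.
\]
For any $F \in D_{\operatorname{qc}}(f_i^\ast\mathcal{A})$, flat base change coming from the projection-formula package of \Cref{sec:lifting} identifies $f_i^\ast \mathbb{R} f_{i\ast} F$ with $\mathbb{R}\pi_{2\ast}\pi_1^\ast F$, and the above splitting decomposes this as $F \oplus \mathbb{R}(\pi_2|_{W_i})_\ast(\pi_1^\ast F|_{W_i})$, the first summand arising from $\pi_1 \circ \Delta_i = \pi_2 \circ \Delta_i = \mathrm{id}$. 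Hence $F$ is a direct summand of $f_i^\ast \mathbb{R} f_{i\ast} F$, and since $\mathbb{R} f_{i\ast} F$ lies in $D_{\operatorname{qc}}(\mathcal{A})$ and is therefore built from $G$ within the prescribed cone bound, applying the shift-, cone-, coproduct- and summand-preserving functor $f_i^\ast$ transfers the same bound to $F$.

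The main obstacle I anticipate is verifying that the flat base change and the clopen splitting are compatible with the noncommutative structure---that is, that $F$ is a retract of $f_i^\ast \mathbb{R} f_{i\ast} F$ inside $D_{\operatorname{qc}}(f_i^\ast\mathcal{A})$ rather than only inside $D_{\operatorname{qc}}(X_i)$. This should come out of the projection-formula machinery of \Cref{sec:lifting} applied to both projections of $X_i \times_X X_i$, combined with the fact that the two pullbacks of the algebra, $\pi_1^\ast f_i^\ast\mathcal{A}$ and $\pi_2^\ast f_i^\ast\mathcal{A}$, agree and respect the clopen decomposition.
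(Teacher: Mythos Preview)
Your proposal is correct and follows essentially the same route as the paper. For $(2)\Rightarrow(1)$ both you and the paper invoke the fppf case (\Cref{introthm:nc_local_to_global_h_cover_big}). For $(1)\Rightarrow(2)$ the paper packages the argument through the ``diagonal dimension of a morphism'' introduced in \Cref{sec:big_rouq_and_diagonal_dim}: one shows $\dim_\Delta(f_i,\operatorname{Perf})=1$ for separated \'etale $f_i$ (precisely because $\Delta_{f_i}$ is clopen, as you note) and then appeals to the general bound of \Cref{prop:pullback_generation_along_flat_separated}. Your direct argument---that the clopen decomposition of $X_i\times_X X_i$ together with nc.\ flat base change exhibits every $F$ as a retract of $\breve{f_i}^\ast\mathbb{R}\breve{f}_{i,\ast}F$---is exactly the specialization of that proposition to the case $n=1$, $G=\mathcal{O}_{X_i}$; the paper even spells out this retract explicitly in the proof of \Cref{cor:rouq_dim_surjective_finite_etale}. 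Your anticipated obstacle (compatibility with the $\mathcal{A}$-module structure) is handled precisely by \Cref{prop:strict_form_nc_projection_formula} and \Cref{prop:nc_base_change}, so there is no gap there. The only difference is cosmetic: the paper's diagonal-dimension framework also covers non-\'etale flat morphisms (e.g.\ projective bundles, smooth morphisms over regular bases), whereas your argument is tailored to the \'etale case and correspondingly shorter.
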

To obtain this stronger result we introduce and briefly study the `diagonal dimension of a morphism' in \Cref{sec:big_rouq_and_diagonal_dim}, as a relative version of the diagonal dimension introduced in \cite{Ballard/Favero:2012}.
Bounds for strong $\oplus$-generation, using the diagonal dimension, are then obtained in \Cref{prop:pullback_generation_along_flat_separated} (and more concretely in \Cref{{cor:separated_flat_diagonal_concrete_cases}}); for this we make use of a `flat base change' for strict morphisms which is shown in \Cref{sec:lifting}.
The essential point is that a (separated) \'{e}tale morphism has diagonal dimension one, which yields the converse direction in \Cref{introthm:nc_local_to_global_etale_big}.

\subsection{Proper descent}
Our next result is a variation for proper (these are exactly those morphisms whose derived pushforwards preserve coherence) surjective morphisms, a special type of h cover; again, under suitable flatness assumptions.
It is a noncommutative generalization of \cite[Theorem C]{Dey/Lank:2024}.

\begin{introthm}[see \Cref{thm:nc_proper_descent}]\label{introthm:nc_proper_descent}
    Let $X$ be a Noetherian scheme and $\mathcal{A}$ be a coherent $\mathcal{O}_X$-algebra. 
    If $f$ is a proper surjective morphism of Noetherian schemes, and either $\mathcal{A}$ is flat over $X$ or $f$ is flat, then there exists an integer $n\geq 0$ such that $D^b_{\operatorname{coh}}(\mathcal{A}) = \langle \mathbb{R}f_\ast D^b_{\operatorname{coh}}(f^\ast \mathcal{A}) \rangle_n$.
\end{introthm}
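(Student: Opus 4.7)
The plan is to mimic the commutative proper descent argument from \cite{Dey/Lank:2024} and lift it to the noncommutative setting through the tensor action of $D(X)$ on $D(\mathcal{A})$, as advertised in the introduction. Two ingredients are in hand. First, since any proper surjective morphism $f\colon Y\to X$ is an h cover, the results recalled in \Cref{sec:h-top} say that $\mathbb{R}f_\ast\mathcal{O}_Y$ is descendable in $D_{\operatorname{qc}}(X)$ in the sense of Mathew, producing a single integer $n\geq 0$ (depending only on $f$) such that
\[
N \in \langle N\otimes^{\mathbb{L}}_{\mathcal{O}_X}\mathbb{R}f_\ast\mathcal{O}_Y\rangle_n
\]
for every $N\in D(X)$. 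Second, the strict projection formula developed in \Cref{sec:lifting} yields, under either flatness hypothesis, an $\mathcal{A}$-linear equivalence $M\otimes^{\mathbb{L}}_{\mathcal{O}_X}\mathbb{R}f_\ast\mathcal{O}_Y \simeq \mathbb{R}f_\ast\mathbb{L}f^\ast M$ for every $M\in D(\mathcal{A})$, where the right-hand side carries its $\mathcal{A}$-module structure through the unit $\mathcal{A}\to\mathbb{R}f_\ast f^\ast\mathcal{A}$.

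First I would leverage the $D(X)$-action on $D(\mathcal{A})$ to upgrade descendability. Because the defining cone filtration of descendability is $D(X)$-linear and the cones, shifts, and summands used can equally well be formed internally in $D(\mathcal{A})$, the identical constant $n$ witnesses $M \in \langle M\otimes^{\mathbb{L}}_{\mathcal{O}_X}\mathbb{R}f_\ast\mathcal{O}_Y\rangle_n$ for every $M\in D(\mathcal{A})$. This is exactly the lifting mechanism advertised in the introduction: descent at the commutative base $X$ is transported automatically to $(X,\mathcal{A})$ through the action, with no increase in the generation constant.

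Next I would apply the projection formula to rewrite the containment as $M\in\langle\mathbb{R}f_\ast\mathbb{L}f^\ast M\rangle_n$ in $D(\mathcal{A})$. Under either flatness hypothesis the strict pullback $\mathbb{L}f^\ast$ takes $D^b_{\operatorname{coh}}(\mathcal{A})$ into $D^b_{\operatorname{coh}}(f^\ast\mathcal{A})$: if $f$ is flat this is immediate, since $\mathbb{L}f^\ast=f^\ast$ is exact and $f^\ast\mathcal{A}$ is coherent; if instead $\mathcal{A}$ is flat over $X$, one computes $\mathbb{L}f^\ast M$ via an $\mathcal{A}$-$K$-flat resolution of $M$, which by flatness of $\mathcal{A}$ over $\mathcal{O}_X$ is automatically $\mathcal{O}_X$-$K$-flat, and the required bounded coherent output is then extracted via the strict base change of \Cref{sec:lifting}. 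In either case we conclude $M\in\langle\mathbb{R}f_\ast D^b_{\operatorname{coh}}(f^\ast\mathcal{A})\rangle_n$, with the same $n$ for every $M$.

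The main obstacle will be the bookkeeping around the strict morphism $(f,f^\sharp)\colon (Y,f^\ast\mathcal{A})\to(X,\mathcal{A})$, especially under the weaker $\mathcal{A}$-flat hypothesis: correctly identifying the $\mathcal{A}$-module structure on each side of the projection formula, computing $\mathbb{L}f^\ast M$ so that boundedness and coherence are preserved, and verifying the base change compatibilities. These are precisely the technical points that \Cref{sec:lifting} is designed to handle, so once that machinery is in place the present theorem becomes a short combination of the descendability of $\mathbb{R}f_\ast\mathcal{O}_Y$ with the projection formula, delivering the uniform bound $n$ from descendability as the constant in the statement.
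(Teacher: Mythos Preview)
Your overall strategy---descendability of $\mathbb{R}f_\ast\mathcal{O}_Y$ transported through the tensor action, followed by the projection formula---is exactly the paper's. The gap is in the third paragraph: when $f$ is not flat (only $\mathcal{A}$ is), the strict pullback $\mathbb{L}\breve{f}^\ast$ does \emph{not} send $D^b_{\operatorname{coh}}(\mathcal{A})$ into $D^b_{\operatorname{coh}}(f^\ast\mathcal{A})$. Knowing that an $\mathcal{A}$-$K$-flat resolution is $\mathcal{O}_X$-$K$-flat only lets you compute $\mathbb{L}\breve{f}^\ast M$ correctly; it gives no lower bound on the cohomology. For a concrete failure take $\mathcal{A}=\mathcal{O}_X$ (trivially flat), $X=\operatorname{Spec}(k[x,y]/(xy))$, $Y$ its normalization (finite, hence proper, and surjective), and $M$ the residue field at the node: then $\mathbb{L}f^\ast M$ has nonzero cohomology in every nonpositive degree. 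The appeal to ``strict base change'' does not help, since \Cref{prop:nc_base_change} requires a flat morphism on the base-change side and says nothing about boundedness of $\mathbb{L}\breve{f}^\ast$.

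The paper circumvents this by never asking $\mathbb{L}\breve{f}^\ast$ to preserve boundedness. It first runs your argument in $D^{-}_{\operatorname{coh}}$, where $\mathbb{L}\breve{f}^\ast$ \emph{does} land (bounded above and coherent are preserved for finite-type morphisms between Noetherian schemes), obtaining $D^{-}_{\operatorname{coh}}(\mathcal{A})=\langle \mathbb{R}\breve{f}_\ast D^{-}_{\operatorname{coh}}(\mathcal{A}_Y)\rangle_n$ from \Cref{prop:nc_aoki} and \Cref{prop:strict_form_nc_projection_formula}. It then invokes a separate truncation step, \Cref{lem:nc_truncate_down_to_bounded}, which says
\[
D^b_{\operatorname{coh}}(\mathcal{A})\cap \langle \mathbb{R}\breve{f}_\ast D^{-}_{\operatorname{coh}}(\mathcal{A}_Y)\rangle_n=\langle \mathbb{R}\breve{f}_\ast D^{b}_{\operatorname{coh}}(\mathcal{A}_Y)\rangle_n,
\]
yielding the bounded statement. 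This lemma (an analogue of \cite[Lemma 3.10]{Dey/Lank:2024}) is the missing ingredient in your sketch; without it the $\mathcal{A}$-flat case does not go through.
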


A corollary of \Cref{introthm:nc_proper_descent} is that we can produce explicit bounds on the Rouquier dimension of $D^b_{\operatorname{coh}}(\mathcal{A})$ in terms of its pullback under a proper morphism with target the underlying scheme, again under the suitable flatness assumptions.
We refrain from spelling this bound out here but refer to \Cref{cor:rouquier_bounds_from_derived_pushforward} below (and see also \Cref{ex:bounds_Dbcoh1,ex:bounds_Dbcoh2} for concrete examples).

\subsection{Stalk-local-to-global}
Another strategy for showing the existence of (strong) generators in bounded derived categories of schemes is through leveraging stalk-local information. 
There are various situations where it is more advantageous to utilize information that is known at the level of stalks as opposed to an affine open cover. 
It is worthwhile to observe the following, which is a global version of \cite[Theorem 3.6]{Letz:2021} and a noncommutative variation of \cite[Theorem 1.7]{BILMP:2023}.

\begin{introthm}[see  \Cref{thm:local_global_stalk_generation}]\label{introth:local_global_stalk_generation}
    Let $X$ be a Noetherian scheme and $\mathcal{A}$ is a coherent $\mathcal{O}_X$-algebra. 
    Fix $E$, $G\in D^b_{\operatorname{coh}}(\mathcal{A})$.
    Suppose $E_x\in\langle G_x \rangle\subseteq D^b_{\operatorname{coh}}(\mathcal{A}_x)$ for all $x\in X$, then there exists a perfect complex $Q$ over $X$ such that $E\in \langle Q \otimes^{\mathbb{L}}_{\mathcal{O}_X} G \rangle\subseteq D^b_{\operatorname{coh}}(\mathcal{A})$. 
    In particular, we may choose $Q\in\operatorname{Perf}(X)$ to be any classical generator.
\end{introthm}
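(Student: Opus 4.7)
The plan is a local-to-global argument: first propagate the stalk-wise building chains to a finite affine open cover by Noetherian compactness, then glue this local data into a global statement using the tensor action of $\operatorname{Perf}(X)$ on $D^b_{\operatorname{coh}}(\mathcal{A})$ via the classical generator $Q$.

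First, the hypothesis $E_x \in \langle G_x \rangle_c$ is realized by a finite diagram of shifts, summands, and $c$ cones, involving only finitely many morphisms. Since $D^b_{\operatorname{coh}}(\mathcal{A}_x) = \varinjlim_{U \ni x} D^b_{\operatorname{coh}}(\mathcal{A}|_U)$ as a filtered colimit over affine opens, this diagram descends to some affine open $U_x \ni x$, giving $E|_{U_x} \in \langle G|_{U_x} \rangle_c$ in $D^b_{\operatorname{coh}}(\mathcal{A}|_{U_x})$. Noetherian quasi-compactness of $X$ then yields a finite affine subcover $\{U_i\}_{i=1}^N$ with a uniform bound $c$.

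Next, observe that $(-) \otimes^{\mathbb{L}}_{\mathcal{O}_X} G$ is a triangulated, summand-preserving functor, so from $\mathcal{O}_X \in \langle Q \rangle \subseteq \operatorname{Perf}(X)$ we obtain $G \in \langle Q \otimes^{\mathbb{L}}_{\mathcal{O}_X} G \rangle$ inside $D^b_{\operatorname{coh}}(\mathcal{A})$, and more generally $P \otimes^{\mathbb{L}}_{\mathcal{O}_X} G \in \langle Q \otimes^{\mathbb{L}}_{\mathcal{O}_X} G \rangle$ for every perfect $P$. We now run Noetherian induction on $\operatorname{supp}(E)$: picking a generic point $\eta$ of an irreducible component of $\operatorname{supp}(E)$ lying in some $V = U_i$, a localization/support triangle decomposes $E$ as an extension of a piece with strictly smaller support (handled by the inductive hypothesis) and a piece reflecting $E|_V \in \langle G|_V \rangle_c$. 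The projection formula of \Cref{sec:lifting} for the $D(X)$-action on $D(\mathcal{A})$ converts the second piece into the tensor action of an auxiliary perfect complex $P$ on $G$, which is then absorbed into $\langle Q \otimes^{\mathbb{L}}_{\mathcal{O}_X} G \rangle$ by the previous sentence.

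The main obstacle is the globalization step: pushforward along open immersions does not preserve either boundedness or coherence, so naive Zariski-\v{C}ech or Mayer--Vietoris arguments fall out of $D^b_{\operatorname{coh}}(\mathcal{A})$. The classical generator $Q$ is precisely the device that bypasses this, since the tensor action $\operatorname{Perf}(X) \otimes^{\mathbb{L}}_{\mathcal{O}_X} G \to D^b_{\operatorname{coh}}(\mathcal{A})$ is coherent-preserving and the thick closure $\langle Q \otimes^{\mathbb{L}}_{\mathcal{O}_X} G \rangle$ contains every $P \otimes^{\mathbb{L}}_{\mathcal{O}_X} G$ with $P$ perfect. Finally, the ``in particular'' clause of the statement follows because any perfect complex produced by the argument lies in $\langle Q \rangle = \operatorname{Perf}(X)$ for an arbitrary classical generator $Q$.
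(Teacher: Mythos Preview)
Your approach is genuinely different from the paper's, and as written it has a real gap in the globalization step.

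The paper does not argue by Noetherian induction on $\operatorname{supp}(E)$ at all. Instead, it passes to the big category $K(\operatorname{Inj}(\mathcal{A}))$, observes that $D^b_{\operatorname{coh}}(\mathcal{A})$ is its subcategory of compacts, and \emph{completes} the $\operatorname{Perf}(X)$-action to a $D_{\operatorname{qc}}(X)$-action $\odot$ on $K(\operatorname{Inj}(\mathcal{A}))$. Stevenson's local-to-global principle for such actions then gives
\[
E \in \operatorname{Loc}^{\odot}(E) = \operatorname{Loc}^{\odot}\{\Gamma_x E : x \in X\} \subseteq \operatorname{Loc}^{\odot}\{\Gamma_x G : x \in X\} = \operatorname{Loc}^{\odot}(G),
\]
where the middle containment uses the stalk hypothesis. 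Since $E$ is compact, membership in the localizing $\odot$-submodule $\operatorname{Loc}^{\odot}(G)$ descends to membership in the smallest thick $\operatorname{Perf}(X)$-submodule containing $G$, which is exactly $\langle Q \otimes^{\mathbb{L}}_{\mathcal{O}_X} G\rangle$. No induction, no triangles to control.

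Your argument, by contrast, hinges on a step that is not justified. You correctly identify the obstacle: the natural localization triangle $\Gamma_{X\setminus V}E \to E \to \mathbb{R}j_\ast j^\ast E$ leaves $D^b_{\operatorname{coh}}(\mathcal{A})$. But your proposed fix---``the projection formula converts the second piece into the tensor action of an auxiliary perfect complex $P$ on $G$''---does not work as stated. The projection formula gives $\mathbb{R}j_\ast(G|_V) \cong \mathbb{R}j_\ast\mathcal{O}_V \otimes^{\mathbb{L}}_{\mathcal{O}_X} G$, and $\mathbb{R}j_\ast\mathcal{O}_V$ is \emph{not} perfect, so this does not land in $\langle Q \otimes G\rangle$. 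Moreover, your Noetherian induction on $\operatorname{supp}(E)$ does not close up: if you lift the building data $E|_V \in \langle G|_V\rangle$ through the Verdier localization to produce some $E' \in \langle G\rangle$ with $E'|_V \cong E|_V$, then $\operatorname{supp}(E')$ is controlled by $\operatorname{supp}(G)$, not by $\operatorname{supp}(E)$; the cone of $E' \to E$ is supported on $X\setminus V$ but need not lie in a proper closed subset of $\operatorname{supp}(E)$, so the induction does not terminate.

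An elementary inductive argument \emph{does} exist in the separated case (the paper remarks on this), but it proceeds by induction over an affine open cover together with Koszul-type support arguments to force all auxiliary objects to have support inside $\operatorname{supp}(E)$---not by the bare Noetherian induction you sketch. You would need, at minimum, to first tensor $G$ by a perfect complex $K$ with $\operatorname{supp}(K) = \operatorname{supp}(E)$ (which stays in $\langle Q \otimes G\rangle$) and then carefully lift through the Verdier quotient while tracking supports.
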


Thus similar stalk-local-to-global arguments exist in our setting.
The proof makes use of the tensor action and relies on the global-to-local principle from \cite{Stevenson:2013}.

\subsection{Applications to Azumaya algebras}
\label{sec:intro_applications_azumaya}

The above allows for some nice applications regarding generation for \textit{Azumaya algebras} over Noetherian schemes, which we very briefly state.
Recall that these are coherent algebras that are \'{e}tale (or even fppf) locally matrix algebras.
Our next result exploits \'etale and proper descent to show that the existence of bounded coherent generators for Azumaya algebras is intimately tied to the existence of those for their center.

\begin{introthm}[see \Cref{thm:azumaya_strong_generator_iff_center}]
    Let $X$ be a separated Noetherian scheme and $\mathcal{A}$ an Azumaya algebra over $X$. 
    Then $D_{\operatorname{qc}}(\mathcal{A})$ admits a strong $\oplus$-generator with bounded coherent cohomology if and only if $D_{\operatorname{qc}}(X)$ has such.
    Moreover, assuming $\mathcal{A}$ can be split by a finite \'{e}tale morphism, we have $\dim D^b_{\operatorname{coh}}(\mathcal{A})=\dim D^b_{\operatorname{coh}}(X)$.
\end{introthm}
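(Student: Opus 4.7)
The plan for the first equivalence is to combine \'etale descent from \Cref{introthm:nc_local_to_global_etale_big} with the defining property of Azumaya algebras. I would pick an \'etale covering $\{f_i\colon X_i \to X\}_i$ that splits $\mathcal{A}$, so that $f_i^\ast \mathcal{A} \cong M_{n_i}(\mathcal{O}_{X_i})$ for integers $n_i \geq 1$. Classical Morita theory then yields equivalences $D_{\operatorname{qc}}(f_i^\ast\mathcal{A}) \simeq D_{\operatorname{qc}}(X_i)$ restricting to bounded and coherent parts, and sending strong $\oplus$-generators to strong $\oplus$-generators. Applying \Cref{introthm:nc_local_to_global_etale_big} once to the pair $(X,\mathcal{A})$ and once to $(X,\mathcal{O}_X)$ with this same cover shows that existence of a strong $\oplus$-generator with bounded coherent cohomology for $D_{\operatorname{qc}}(\mathcal{A})$ is equivalent to the same condition for each $D_{\operatorname{qc}}(X_i)$, which is in turn equivalent to the analogous condition for $D_{\operatorname{qc}}(X)$.

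For the dimension equality, assume $f\colon Y\to X$ is a finite \'etale morphism with $f^\ast\mathcal{A} \cong M_n(\mathcal{O}_Y)$, whence $\dim D^b_{\operatorname{coh}}(f^\ast\mathcal{A}) = \dim D^b_{\operatorname{coh}}(Y)$ via Morita. The main auxiliary claim to isolate is: \emph{for any coherent $\mathcal{O}_X$-algebra $\mathcal{B}$ and any finite \'etale $f\colon Y\to X$, one has $\dim D^b_{\operatorname{coh}}(\mathcal{B}) = \dim D^b_{\operatorname{coh}}(f^\ast\mathcal{B})$.} For $(\leq)$, I would take a strong generator $H$ of $D^b_{\operatorname{coh}}(f^\ast\mathcal{B})$ of dimension $d$. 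For any $F\in D^b_{\operatorname{coh}}(\mathcal{B})$, one has $f^\ast F\in \langle H\rangle_{d+1}$, and since $f$ is finite $\mathbb{R}f_\ast$ is exact and coherence-preserving, so $\mathbb{R}f_\ast f^\ast F \in \langle \mathbb{R}f_\ast H\rangle_{d+1}$. The projection formula (cf.\ \Cref{sec:lifting}, using that $f$ is flat and the morphism is strict) identifies $\mathbb{R}f_\ast f^\ast F$ with $F\otimes^{\mathbb{L}}_{\mathcal{O}_X} f_\ast\mathcal{O}_Y$, and the trace map for a finite \'etale cover splits the central unit $\mathcal{O}_X \hookrightarrow f_\ast\mathcal{O}_Y$, exhibiting $F$ as a $\mathcal{B}$-linear direct summand of $F\otimes^{\mathbb{L}}_{\mathcal{O}_X} f_\ast\mathcal{O}_Y$. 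Hence $F\in\langle \mathbb{R}f_\ast H\rangle_{d+1}$, giving the desired bound. The reverse inequality I would handle symmetrically, using that the counit $f^\ast \mathbb{R}f_\ast \to \operatorname{id}$ admits an $f^\ast\mathcal{B}$-linear section for finite \'etale morphisms (via the separability idempotent / the decomposition $Y\times_X Y \cong Y \sqcup Y'$). Applying this lemma to $\mathcal{B}=\mathcal{A}$ and to $\mathcal{B}=\mathcal{O}_X$ concatenates to $\dim D^b_{\operatorname{coh}}(\mathcal{A}) = \dim D^b_{\operatorname{coh}}(f^\ast\mathcal{A}) = \dim D^b_{\operatorname{coh}}(Y) = \dim D^b_{\operatorname{coh}}(X)$.

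The main obstacle will be verifying that these splittings operate at the level of $\mathcal{B}$-modules rather than just $\mathcal{O}_X$-modules. The saving grace is that the $\mathcal{B}$-action on $F\otimes_{\mathcal{O}_X} f_\ast\mathcal{O}_Y$ is through the first factor, so the map $F \hookrightarrow F\otimes_{\mathcal{O}_X} f_\ast\mathcal{O}_Y$ induced by the central inclusion $\mathcal{O}_X \hookrightarrow f_\ast\mathcal{O}_Y$ is automatically $\mathcal{B}$-linear; dually, the separability idempotent lives in the center of $f_\ast\mathcal{O}_Y \otimes_{\mathcal{O}_X} f_\ast\mathcal{O}_Y$ and therefore commutes with the $f^\ast\mathcal{B}$-structure. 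Everything else amounts to careful bookkeeping of the tensor action, the (flat) base change for strict morphisms, and the adjunction formalism already set up in \Cref{sec:lifting}.
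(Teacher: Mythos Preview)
Your proposal is correct and follows essentially the same route as the paper: for the first equivalence both you and the paper apply \'etale descent (\Cref{thm:etale_cover_strong_oplus_bounded_coherent_exist}) to $(X,\mathcal{A})$ and to $(X,\mathcal{O}_X)$ against a common splitting cover and invoke Morita; for the dimension equality both isolate the auxiliary claim that finite \'etale covers preserve Rouquier dimension (this is \Cref{cor:rouq_dim_surjective_finite_etale} in the paper) and then apply it to $\mathcal{A}$ and to $\mathcal{O}_X$. The only minor deviation is that you split $\mathcal{O}_X \to f_\ast\mathcal{O}_Y$ directly via the trace, whereas the paper obtains this splitting through a more general argument valid for finite faithfully flat morphisms (\Cref{lem:counit_split_faithfully_flat_of_finite_type}); your counit splitting via the diagonal decomposition is exactly the paper's \Cref{ex:diagdim_etale}.
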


This immediately yields a this yields the version of Aoki \cite[Main Theorem]{Aoki:2021} for Azumaya algebras, see \Cref{cor:nc_aoki}.
In addition, using a `d\'{e}vissage along closed integral subschemes' argument we show, in \Cref{prop:lifting_strong_gen_from_centre_for_azumaya}, that, for any Azumaya algebra $\mathcal{A}$ over a scheme $X$, every classical generator $G\in D^b_{\operatorname{coh}}(X)$ yields a classical generator $G\otimes_{\mathcal{O}_X} \mathcal{A}\in D^b_{\operatorname{coh}}(\mathcal{A})$.
Finally, for our last result, \Cref{prop:countable_rouq_dim_derived_splinter}, we extend a result of Olander \cite[Theorem 4]{Olander:2023} to Azumaya algebras over mildly singular schemes (derived splinters).

\subsection{A comment of philosophical nature}
\label{sec:PhD}

To end, let us say a word concerning our `mild noncommutative spaces'.
Even though we state our results for noncommutative schemes, everything we do also, of course, is true for (commutative) schemes. 
Even in the commutative setup some of the results give new perspectives, e.g.\ \'{e}tale descent for strong generation.
Our main reason for our set-up is that it is a natural framework in which one can apply geometric techniques such as descent.
There is no reason to restrict oneself to only work with `commutative spaces'---thinking of noncommutative schemes as geometric objects is a valid thing to do.

\subsection{Notation}
\label{sec:intro_notation}

Let $X$ be a scheme and $\mathcal{A}$ be a quasi-coherent $\mathcal{O}_X$-algebra.
Denote by
\begin{enumerate}
    \item $D(\mathcal{A}):=D(\operatorname{Mod}(\mathcal{A}))$ the derived category of (right) $\mathcal{A}$-modules,
    \item $D_{\operatorname{qc}}(\mathcal{A})$ the full subcategory of $D(\mathcal{A})$ consisting of complexes with quasi-coherent cohomology,
    \item $D_{\operatorname{coh}}^b(\mathcal{A})$ the full subcategory of $D_{\operatorname{qc}}(\mathcal{A})$ consisting of complexes with bounded and coherent cohomology,
    \item $\operatorname{Perf}(\mathcal{A})$ the full subcategory of $D^b_{\operatorname{coh}}(\mathcal{A})$ consisting of perfect complexes.
\end{enumerate}
Note that the special case of $\mathcal{A}=\mathcal{O}_X$ resorts us back to the familiar constructions for schemes; we write $D(X)$ instead of $D(\mathcal{O}_X)$ etc. If $X$ is affine, then we might at times abuse notation and write $D(A):=D_{\operatorname{qc}}(\mathcal{A})$ where $A:=H^0(X,\mathcal{A})$ are the global sections; similar conventions occur for the other categories.
Moreover, when $A$ is coherent we write $\operatorname{mod}$(:=the finitely presented modules) instead of $\operatorname{coh}$.

\begin{ack}
    Timothy De Deyn was supported under the ERC Consolidator Grant 101001227 (MMiMMa). Pat Lank was supported by the National
    Science Foundation under Grant No. DMS-2302263. Both Pat Lank and Kabeer Manali Rahul were supported under the ERC Advanced Grant 101095900-TriCatApp. Additionally, Kabeer Manali Rahul was a recipient of an Australian Government Research Training Program Scholarship, and thanks Universit\`{a} degli Studi di Milano for their hospitality during his stay there.
    Moreover, the authors thank Greg Stevenson and Jan {\v{S}}\v{t}ov\'{i}{\v{c}}ek for some helpful discussions regarding \Cref{sec:big_vs_small_elephant}.
\end{ack}

\section{Preliminaries}
\label{sec:preliminaries}

This section recollects some known concepts pertaining to generation in triangulated categories, provides some background for quasi-coherent algebras over schemes and gives some definitions regarding the h topology.

\subsection{Generation}
\label{sec:generation}

We recall some terminology and notation introduced in \cite{Bondal/VandenBergh:2003,Rouquier:2008}.
Let $\mathcal{T}$ be a triangulated category.

For a (not-necessarily triangulated) full subcategory $\mathsf{S}\subseteq\mathsf{T}$ let\footnote{It is important to note that our notation for $\operatorname{add}$ (and $\operatorname{Add}$ defined further along) differs from those in the literature. For instance, we closely follow \cite{Bondal/VandenBergh:2003}, but we also allow $\operatorname{add}$ and $\operatorname{Add}$ to be closed under direct summands, which is again slightly different from \cite{Neeman:2021b}. The primary `difference' between our definitions, \cite{Bondal/VandenBergh:2003} and \cite{Neeman:2021b} is whether or not one allows $\operatorname{add}$ and $\operatorname{Add}$ to be closed under direct summands and/or shifts; see also \cite[Remark 1.2]{Neeman:2021b}.} $\operatorname{add}\mathcal{S}$ denote the smallest strictly full subcategory of $\mathcal{T}$ containing $\mathcal{S}$ which is closed under shifts, finite coproducts and direct summands; inductively define 
\begin{displaymath}
    \langle\mathcal{S}\rangle_n :=
    \begin{cases}
        \operatorname{add}\varnothing & \text{if }n=0, \\
        \operatorname{add}\mathcal{S} & \text{if }n=1, \\
        \operatorname{add}\{ \operatorname{cone}\phi \mid \phi \in \operatorname{Hom}(\langle \mathcal{S} \rangle_{n-1}, \langle \mathcal{S} \rangle_1) \} & \text{if }n>1.
    \end{cases}
\end{displaymath}
Moreover, let $\langle\mathcal{S}\rangle:=\cup_{n\geq 0} \langle\mathcal{S}\rangle_n$, this is the smallest \textbf{thick}, i.e.\ closed under direct summands, triangulated subcategory containing $\mathcal{S}$.

If $\mathcal{T}$ admits small coproducts, define $\operatorname{Add}\mathcal{S}$ to be the smallest strictly full subcategory of $\mathcal{T}$ containing $\mathcal{S}$ which is closed under shifts, small coproducts and direct summands; again, inductively define 
\begin{displaymath}
    \overline{\langle\mathcal{S}\rangle}_n :=
    \begin{cases}
        \operatorname{Add}\varnothing & \text{if }n=0, \\
        \operatorname{Add}\mathcal{S} & \text{if }n=1, \\
        \operatorname{Add}\{ \operatorname{cone}\phi \mid \phi \in \operatorname{Hom}(\overline{\langle \mathcal{S} \rangle}_{n-1}, \overline{\langle \mathcal{S} \rangle}_1) \} & \text{if }n>1.
    \end{cases}
\end{displaymath}
Whenever $\mathcal{S}=\{G\}$ consist of a single object, we leave out the brackets simply writing $\langle G\rangle$ etc.

An object $G\in \mathcal{T}$ is called a \textbf{classical generator} if $\mathcal{T}=\langle G \rangle$ and a \textbf{strong generator} if $\mathcal{T} = \langle G \rangle_{n+1} $ for some $n\geq 0$.
Moreover, the smallest such $n$ (amongst all possible objects $G$) is called the \textbf{Rouquier dimension} of $\mathcal{T}$ and is denoted $\dim \mathcal{T}$.
If, in addition, $\mathcal{T}$ has small coproducts then $G$  is called  a \textbf{strong $\oplus$-generator} if $\mathcal{T} = \overline{\langle G \rangle}_{n+1}$ for some $n\geq 0$.

Lastly, unless otherwise specified, we adopt the following convention: for a triangulated functor $F\colon \mathcal{T} \to \mathcal{T}^\prime$, $F(\mathcal{T})$ denotes the essential image, i.e.\ the smallest strictly full subcategory containing $\{ F(T)\in\mathcal{T}' \mid T\in\mathcal{T} \}$.

\subsection{Noncommutative schemes}
\label{sec:nc_scheme}

The primary source of reference for what follows below is \cite[$\S 3$]{DeDeyn/Lank/ManaliRahul:2024}. We refer to loc.\ cit.\ for more information as we only briefly recall a few things relevant for this work.

\begin{definition}\label{def:nc_scheme}
    \hfill
    \begin{enumerate}
        \item A \textbf{(mild) noncommutative scheme} is a pair $(X,\mathcal A)$ where $X$ is a scheme and $\mathcal A$ is a quasi-coherent $\mathcal{O}_X$-algebra.
        \item A \textbf{morphism of noncommutative schemes} $(f, \varphi)\colon(Y, \mathcal B)\to (X,\mathcal A)$ consists of a morphism of schemes $f\colon Y\to X$ and a morphism $\varphi\colon f^{-1}\mathcal A\to\mathcal B$ of sheaves of $f^{-1}\mathcal O_{X}$-algebras. 
        \item A noncommutative scheme $(X,\mathcal A)$ is called 
        \begin{enumerate}
            \item \textbf{affine} if $X=\operatorname{Spec}(R)$ for some commutative ring $R$,
            \item \textbf{coherent} if $\mathcal A$ is coherent as an $\mathcal{O}_X$-module,
            \item \textbf{integral} if its underlying scheme $X$ is integral,
            \item \textbf{Noetherian} if its underlying scheme $X$ is Noetherian.
        \end{enumerate}
        In addition, we will use `topological' terminology (i.e quasi-compact, separated, etc.) for a nc.\ scheme $(X,\mathcal{A})$ if the underlying scheme $X$ enjoys such properties.
    \end{enumerate}
\end{definition}

In this work we will be mainly focused on coherent Noetherian nc.\ schemes, and so they deserve a special (shorter) and more appropriate name.
Recall that a \textbf{Noether algebra} is a Noetherian (potentially noncommutative) ring which is finite (as module) over its center.
The coherent Noetherian affine nc.\ schemes are exactly the Noether algebras.
For this reason we adopt the following terminology.

\begin{definition}
    A noncommutative scheme $(X,\mathcal{A})$ is called a \textbf{Noether noncommutative scheme} if it is coherent and Noetherian.
\end{definition}

\begin{example}
    \hfill
    \begin{enumerate}
        \item Any scheme $X$ yields a nc.\ scheme $(X,\mathcal{O}_X)$ which we will simply keep writing as just $X$.
        \item For any nc.\ scheme $(X,\mathcal{A})$ the morphism $\mathcal{O}_X\to \mathcal{A}$ induces a morphism of noncommutative schemes $\pi\colon(X,\mathcal{A})\to X$ which we refer to as the \textbf{structure morphism}.
        \item Let $X$ be a scheme. If $\mathcal{E}$ is a locally free $\mathcal{O}_X$-module of finite rank, then $\operatorname{\mathcal{E}\! \mathit{nd}}(\mathcal{E})$ (the sheaf Hom) is a finite locally free (hence flat) $\mathcal{O}_X$-algebra. 
        \item An \textbf{Azumaya algebra} on a Noetherian scheme $X$ is a coherent sheaf of algebras $\mathcal{A}$ over $X$ that is \'{e}tale (or even fppf) locally split in the sense that it is a matrix algebra; see e.g.\ \cite{Milne:1980} for more characterisations.
        More generally, we say an Azumaya algebra \textbf{splits} when it is as in the previous item (i.e.\ it represents the trivial class in the Brauer group).
        We call any such pair $(X,\mathcal{A})$ an \textbf{Azumaya scheme} (so in particular we assume $X$ is Noetherian). 
        The special case where $X$ is a variety occurs in \cite[Appendix D]{Kuznetsov:2006}, where their associated derived categories are studied.
    \end{enumerate}
\end{example}

For a nc.\ scheme $(X,\mathcal A)$ we let $\operatorname{Mod}(X,\mathcal{A})$, $\operatorname{Qcoh}(X,\mathcal{A})$ and $\operatorname{coh}(X,\mathcal{A})$ denote the usual categories of respectively (right) $\mathcal{A}$-modules, quasi-coherent $\mathcal{A}$-modules and coherent $\mathcal{A}$-modules\footnote{Observe that quasi-coherence as $\mathcal{A}$-module can be checked at the level of the underlying $\mathcal{O}_X$-modules because $\mathcal A$ is a quasi-coherent $\mathcal{O}_X$-algebra. The same is true for coherence, granted that $\mathcal{A}$ is coherent.}.
Of course, we also consider their appropriate derived categories.
We define $D(X, \mathcal A):= D(\operatorname{Mod}(X,\mathcal{A}))$ the derived category of $\mathcal A$-modules, $D_{\operatorname{qc}}(X, \mathcal{A})$ the strictly full subcategory consisting of objects with quasi-coherent cohomology and $ D^b_{\operatorname{coh}}(X, \mathcal{A})$ is the strictly full subcategory consisting of objects with bounded and coherent cohomology\footnote{
Note that the natural functor from $D(\operatorname{Qcoh}(\mathcal A))$ to $D_{\operatorname{qc}}(\mathcal A)$ is an equivalence when $(X,\mathcal A)$ is quasi-compact and separated. Additionally, the natural functor from $D^-(\operatorname{coh}(\mathcal A))$ to $D^-_{\operatorname{coh}}(\operatorname{Qcoh}(\mathcal A))$ is an equivalence when $(X,\mathcal A)$ is coherent and Noetherian. See \cite[Proposition 1.3]{Alonso/Jeremias/Lipman:1997} and \cite[\href{https://stacks.math.columbia.edu/tag/0FDA}{Tag 0FDA}]{stacks-project} for proofs of the commutative variants, which extend to our setting.
}.    
When clear from context we leave the $X$ out of the previous notations, simply writing $\operatorname{Mod}(\mathcal{A})$, etc.
Moreover, if $X$ is affine, then we might at times abuse notation and write $D(A):=D_{\operatorname{qc}}(\mathcal{A})$ where $A:=H^0(X,\mathcal{A})$ are the global sections.
In addition, let us note that a morphism of nc.\ schemes $f \colon (Y,\mathcal B) \to (X, \mathcal A)$ induces the usual pullback-pushforward adjunction between the appropriate categories (under suitable hypotheses).
Derived functors can be defined and computed per usual, see \cite{Spaltenstein:1988}. 
A few references are \cite[Appendix D]{Kuznetsov:2006}, \cite[Proposition 3.12]{Burban/Drozd/Gavran:2017}, or \cite[$\S 3$]{DeDeyn/Lank/ManaliRahul:2024}.

\begin{construction}\label{con:breve}
    Let $(X,\mathcal{A})$ be a nc.\ scheme and let $f\colon Y \to X$ be a morphism of schemes. 
    As $f^\ast \mathcal{A}$ is a quasi-coherent $\mathcal{O}_Y$-algebra, we obtain a nc.\ scheme $(Y,f^\ast \mathcal{A})$ and an induced strict\footnote{A morphism $f\colon (Y,\mathcal{B}) \to (X, \mathcal{A})$ is called \textbf{strict} when $f^\ast\mathcal{A}=\mathcal{B}$.} morphism $(Y,f^\ast \mathcal{A})\to (X, \mathcal{A})$  by taking $\varphi\colon f^{-1}\mathcal A\to f^\ast\mathcal{A}=f^{-1}\mathcal A\otimes_{f^{-1}\mathcal O_X}\mathcal O_Y$ to be the obvious map.
    When confusion can occur we denote this induced morphism by $\Breve{f}$.
    However, as this morphism is compatible with the structure morphisms, we often abuse notation and simply denote the resulting morphism of nc.\ schemes again by $f$.
    Moreover, it is often notationally convenient to denote $f^\ast\mathcal{A}$ by $\mathcal{A}_Y$  when no confusion can occur (i.e.\ there is only one map $Y\to X$ in play).
    In this way one obtains a functor $\operatorname{Sch}/X \to \operatorname{nc.Sch}/(X,\mathcal{A}),~Y\mapsto  (Y,\mathcal{A}_Y)$.
\end{construction}

For examples of nc.\ schemes whose derived categories have generators in the sense of \Cref{sec:generation} we refer to \cite[Example 3.18]{DeDeyn/Lank/ManaliRahul:2024}. For future reference, let us observe the following variation of \cite[Lemma 3.9]{DeDeyn/Lank/ManaliRahul:2024} with exactly the same proof.

\begin{lemma}\label{lem:nc_bounded_above}
    Let $f \colon (Y,\mathcal B) \to (X, \mathcal A)$ be a proper morphism of Noether nc.\ schemes. 
    The derived pushforward restricts to an exact functor $\mathbb{R}f_\ast \colon D^-_{\operatorname{coh}}(\mathcal{B}) \to D^-_{\operatorname{coh}}( \mathcal{A})$.
\end{lemma}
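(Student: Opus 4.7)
The plan is to reduce the statement to the classical coherence theorem for proper pushforward by exploiting the structure morphisms of the nc.\ schemes. To this end, consider the commutative square of nc.\ scheme morphisms
\[
\begin{tikzcd}
(Y,\mathcal{B}) \ar[r,"f"] \ar[d,"\pi_Y"'] & (X,\mathcal{A}) \ar[d,"\pi_X"] \\
Y \ar[r,"f"'] & X,
\end{tikzcd}
\]
where $\pi_X$ and $\pi_Y$ are the structure morphisms and (by abuse) the underlying morphism of schemes is denoted again by $f$. Since both $\pi_X$ and $\pi_Y$ have identity underlying maps of schemes, their (underived) pushforwards are the exact `forget-the-module-structure-down-to-$\mathcal{O}$' functors, and functoriality of pushforward along composition yields a natural isomorphism $(\pi_X)_\ast\circ \mathbb{R}f_\ast\cong \mathbb{R}f_\ast\circ (\pi_Y)_\ast$ of functors from $D(\mathcal{B})$ to $D(X)$.

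The crucial property is that, because $\mathcal{A}$ and $\mathcal{B}$ are coherent as $\mathcal{O}$-modules, these forgetful functors \emph{detect} bounded-above coherent cohomology: a complex $N$ of $\mathcal{A}$-modules lies in $D^-_{\operatorname{coh}}(\mathcal{A})$ if and only if $(\pi_X)_\ast N \in D^-_{\operatorname{coh}}(X)$, and similarly for $\mathcal{B}$ over $Y$. Both directions are straightforward: one uses that coherence of a module can be checked on the underlying $\mathcal{O}$-module, and conversely that an $\mathcal{O}_X$-coherent quasi-coherent $\mathcal{A}$-module is automatically $\mathcal{A}$-coherent since $\mathcal{A}$ itself is coherent over $\mathcal{O}_X$.

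Given $M\in D^-_{\operatorname{coh}}(\mathcal{B})$, the detection property places $(\pi_Y)_\ast M$ in $D^-_{\operatorname{coh}}(Y)$, and then the classical coherence theorem for proper morphisms of Noetherian schemes places $\mathbb{R}f_\ast(\pi_Y)_\ast M$ in $D^-_{\operatorname{coh}}(X)$; the bounded-above assumption is preserved since $\mathbb{R}f_\ast$ has finite cohomological dimension on quasi-coherent sheaves in this setting. The commutation isomorphism then identifies this with $(\pi_X)_\ast\mathbb{R}f_\ast M$, and the converse of the detection property finally forces $\mathbb{R}f_\ast M\in D^-_{\operatorname{coh}}(\mathcal{A})$, giving the desired restriction; its triangulatedness is automatic from that of $\mathbb{R}f_\ast$ on the ambient category. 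The only real technicality, rather than a genuine obstacle, is verifying the commutation $(\pi_X)_\ast\circ \mathbb{R}f_\ast\cong \mathbb{R}f_\ast\circ (\pi_Y)_\ast$ at the derived level; this is formal given exactness of the structure pushforwards together with the existence of K-injective resolutions in these module categories \cite{Spaltenstein:1988}.
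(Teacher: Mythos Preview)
Your proof is correct and follows exactly the expected strategy: reduce to the classical coherence theorem for proper morphisms of Noetherian schemes via the structure morphisms, using that the forgetful pushforwards $(\pi_X)_\ast$, $(\pi_Y)_\ast$ are exact and detect coherence (the latter being precisely the footnoted remark in the paper that coherence can be checked on the underlying $\mathcal{O}$-module when $\mathcal{A}$ is coherent). The paper itself does not spell out a proof but defers to \cite[Lemma 3.9]{DeDeyn/Lank/ManaliRahul:2024} ``with exactly the same proof''; your argument is the standard one and is what that reference does (there for $D^b_{\operatorname{coh}}$, here for $D^-_{\operatorname{coh}}$).
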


\subsection{h topology}
\label{sec:h-top}

We dabble with the h topology, introduced in \cite{Voevodsky:1996}, in this work.
Therefore, we briefly recall the main definitions so everyone is on the same page. 
For non-Noetherian schemes definitions diverge a bit, however, we will only deal with Noetherian schemes, so we restrict to those for ease.

\begin{definition}[{see \cite[\href{https://stacks.math.columbia.edu/tag/040H}{Tag 040H}]{stacks-project}}]
    Let  $f\colon X\to Y$ be a morphism of schemes.
    \begin{enumerate}
        \item We say $f$ is \textbf{submersive} if the underlying continuous map of topological spaces is submersive.
        This means that the continuous map is surjective and $U\subseteq Y$ is an open or closed subset if and only if $f^{-1}(U)\subseteq X$ is so.
        \item We say $f$ is \textbf{universally submersive} if for every morphism of schemes $Y^\prime \to Y$ the base change $Y^\prime\times_Y X \to Y$ is submersive.
    \end{enumerate}
\end{definition}

\begin{definition}\label{def:h_topology}
        Let $X$ be a Noetherian scheme. A finite family of finite type morphisms
        $\{f_i \colon X_i \to X\}_{i\in I}$ is called a \textbf{(finite) h  covering} if $\amalg_{i \in I} X_ i \to X$ is universally submersive. When the family consists of a single morphism $\{f:Y\to X\}$ we call $f$ an \textbf{h  cover}.
\end{definition}

This is consistent with \cite[Definition 2.7]{Bhatt/Scholze:2017} taking into account  \cite[\href{https://stacks.math.columbia.edu/tag/0ETS}{Tag 0ETS}]{stacks-project} and \cite[\href{https://stacks.math.columbia.edu/tag/0ETT}{Tag 0ETT}]{stacks-project}.

\section{Lifting along strict morphisms}
\label{sec:lifting}

In this section we `lift' some statements from (commutative) algebraic geometry to our nc.\ setting.
The easiest way to achieve this is to work with strict morphisms of nc.\ schemes over some fixed nc.\ base scheme as in \Cref{con:breve}.
In particular, we show analogues of the `projection formula' and `flat base change' in our set-up for strict morphisms.
These allow us to lift generation statements from the commutative world to the noncommutative world in \Cref{sec:descent_conditions}.

A crucial ingredient for obtaining a projection formula is to have a suitable `tensor' to formulate the formula with (nc.\ schemes generally no longer have a monoial structure). 
To this end, we consider the action of the underlying central scheme on the nc.\ scheme, i.e.\ for any nc.\ scheme $(X,\mathcal{A})$ we consider the following action
\begin{equation}\label{eq:tensor-action}
   (-\otimes^{\mathbb{L}}_{\mathcal{O}_X}-) \colon D(X)\times D(\mathcal{A})\to D(\mathcal{A})
\end{equation}
and its suitable restrictions. 
We refer to \cite{Stevenson:2013} for the basics of tensor triangulated categories acting on other triangulated categories. 

\begin{proposition}\label{prop:strict_form_nc_projection_formula}
    Let $(X,\mathcal{A})$ be a nc.\ scheme and let $f\colon Y\to X$ be a quasi-compact and quasi-separated morphism of schemes. 
    Assume either $\mathcal{A}$ is flat over $X$ or $f$ is a flat morphism. 
    There is a canonical natural isomorphism
    \begin{equation}\label{eq: ncag proj formula}
        \mathbb{R}f_\ast E \otimes_{\mathcal{O}_X}^{\mathbb{L}} F \xrightarrow{\sim} \mathbb{R}\breve{f}_\ast (E \otimes_{\mathcal{O}_Y}^{\mathbb{L}} \mathbb{L}\breve{f}^\ast F)
   \quad \text{ (in $D_{\operatorname{qc}}(\mathcal{A})$)}     
    \end{equation}
    for all $E$ in $D_{\operatorname{qc}}(X)$ and $F$ in $D_{\operatorname{qc}}(\mathcal{A})$.
\end{proposition}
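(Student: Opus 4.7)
The plan is to reduce \eqref{eq: ncag proj formula} to the classical projection formula for the underlying morphism $f\colon Y\to X$ of schemes, by passing through the exact and conservative forgetful functor $\operatorname{Forget}\colon\operatorname{Mod}(\mathcal A)\to\operatorname{Mod}(\mathcal O_X)$. The flatness hypothesis is exactly what is needed for $\operatorname{Forget}$ to be compatible with the nc.\ pullback.

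First I would construct the canonical morphism by the standard recipe. The input is that $\mathbb L\breve f^\ast$ is strong monoidal for the tensor action \eqref{eq:tensor-action}, that is
\[
    \mathbb L\breve f^\ast\bigl(G\otimes^{\mathbb L}_{\mathcal O_X}F\bigr)\cong\mathbb L f^\ast G\otimes^{\mathbb L}_{\mathcal O_Y}\mathbb L\breve f^\ast F,
\]
which can be verified directly by choosing a K-flat $\mathcal O_X$-resolution $P\to G$ and an $\mathcal A$-flat resolution $Q\to F$: both sides then canonically compute $f^\ast(P\otimes_{\mathcal O_X}Q)$. Specialising to $G=\mathbb R f_\ast E$, composing with the counit $\mathbb L f^\ast\mathbb R f_\ast E\to E$ (tensored with $\mathbb L\breve f^\ast F$), and transferring across the adjunction $\mathbb L\breve f^\ast\dashv\mathbb R\breve f_\ast$ yields the desired natural transformation \eqref{eq: ncag proj formula}.

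To see this transformation is an isomorphism I would apply $\operatorname{Forget}$ and check three compatibilities. That $\operatorname{Forget}$ commutes with $\mathbb R\breve f_\ast$ is immediate since $\breve f_\ast$ is just the sheaf-theoretic pushforward of the underlying module (and flasque/injective resolutions over $\mathcal A$ are $f_\ast$-acyclic on underlying sheaves). Compatibility with $-\otimes^{\mathbb L}_{\mathcal O_X}-$ is built into the definition \eqref{eq:tensor-action} of the action, which is computed via flat resolutions on the $\mathcal O_X$-side. The last and most delicate compatibility, $\operatorname{Forget}\circ\mathbb L\breve f^\ast\simeq\mathbb L f^\ast\circ\operatorname{Forget}$, is precisely where the flatness assumption enters: if $\mathcal A$ is flat over $\mathcal O_X$, then any $\mathcal A$-flat resolution $Q\to F$ is simultaneously $\mathcal O_X$-flat, and both derived functors are computed by $f^\ast Q$; if instead $f$ is flat, both $\mathbb L\breve f^\ast$ and $\mathbb L f^\ast$ are underived and literally coincide on underlying sheaves.

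Having established these compatibilities, the image of \eqref{eq: ncag proj formula} under $\operatorname{Forget}$ is exactly the classical projection formula morphism for the qcqs map $f\colon Y\to X$ applied to $E$ and the underlying complex $\operatorname{Forget}(F)\in D_{\operatorname{qc}}(X)$, which is an isomorphism by \cite[\href{https://stacks.math.columbia.edu/tag/08EU}{Tag 08EU}]{stacks-project}. Since $\operatorname{Forget}$ is conservative, \eqref{eq: ncag proj formula} is itself an isomorphism. I expect the main technical obstacle to be the last compatibility $\operatorname{Forget}\circ\mathbb L\breve f^\ast\simeq\mathbb L f^\ast\circ\operatorname{Forget}$; this is the only nontrivial use of hypotheses, and explains why the two alternative flatness assumptions appear.
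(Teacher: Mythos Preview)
Your proposal is correct and follows essentially the same approach as the paper: construct the map via monoidal compatibility of $\mathbb{L}\breve f^\ast$ and adjunction, then check it is an isomorphism by applying the conservative forgetful functor (what the paper calls $\pi_{X,\ast}$) and reducing to the classical projection formula, with the flatness hypothesis used precisely to ensure $\operatorname{Forget}\circ\mathbb{L}\breve f^\ast\simeq\mathbb{L}f^\ast\circ\operatorname{Forget}$. The only cosmetic difference is that the paper phrases the forgetful functor as pushforward along the structure morphism $\pi\colon(X,\mathcal{A})\to X$, and suggests verifying the key compatibility by reducing to $F=\mathcal{A}$ rather than arguing directly with resolutions as you do.
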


\begin{proof}
    That the morphism in \eqref{eq: ncag proj formula} actually exists follows from the pullbacks being compatible with the tensor actions. 
    We have
    \begin{equation}\label{eq:'monoidal'}
    \mathbb{L}\breve{f}^\ast(\mathbb{R} f_\ast E\otimes_{\mathcal{O}_X}^{\mathbb{L}} F)= \mathbb{L}{f}^\ast\mathbb{R} f_\ast E\otimes_{\mathcal{O}_Y}^{\mathbb{L}} \mathbb{L}\breve{f}^\ast F .
    \end{equation}
    Thus, one obtains the morphism, as usual, by adjunction from 
    \[
        \mathbb{L}\breve{f}^\ast(\mathbb{R} f_\ast E\otimes_{\mathcal{O}_X}^{\mathbb{L}} F) = \mathbb{L}{f}^\ast\mathbb{R} f_\ast E\otimes_{\mathcal{O}_Y}^{\mathbb{L}} \mathbb{L}\breve{f}^\ast F  
        \xrightarrow{\text{ counit }}  E\otimes_{\mathcal{O}_Y}^{\mathbb{L}} \mathbb{L}\breve{f}^\ast F .
    \]

    To show it is an isomorphism, it suffices to show this after applying the structure morphism $\pi_{X,\ast}$ as this reflects (quasi-)isomorphisms.
    So, it suffices to show
    \[
        \mathbb{R}f_\ast E \otimes_{\mathcal{O}_X}^{\mathbb{L}} \pi_{X,\ast}F = \mathbb{R}{f}_\ast (E \otimes_{\mathcal{O}_Y}^{\mathbb{L}} \pi_{Y,\ast}\mathbb{L}\breve{f}^\ast F)\quad\text{(in $D_{\operatorname{qc}}(X)$).}
    \]
    However, note that $\pi_{Y,\ast}\mathbb{L}\breve{f}^\ast F=\mathbb{L}{f}^\ast\pi_{X,\ast}F$ by either of the flatness assumptions (e.g.\ reduce to checking this for $F=\mathcal{A})$, hence we reduce to the usual projection formula, see e.g.\ \cite[\href{https://stacks.math.columbia.edu/tag/08EU}{Tag 08EU}]{stacks-project}. 
\end{proof}

A variant, with proof along the same lines, is the following.

\begin{proposition}\label{prop:tensor_nc_projection_formula}
    Let $(X,\mathcal{A})$ be a nc.\ scheme and let $f\colon Y\to X$ be a quasi-compact quasi-separated morphism of schemes. There is a canonical natural isomorphism
    \[
        F \otimes^{\mathbb{L}}_{\mathcal{O}_X} \mathbb{R}\breve{f}_\ast E \xrightarrow{\sim} \mathbb{R}\breve{f}_\ast (\mathbb{L} f^\ast F \otimes^{\mathbb{L}}_{\mathcal{O}_Y} E)
   \quad \text{ (in $D_{\operatorname{qc}}(\mathcal{A})$)}     
    \]
    for all $F$ in $D_{\operatorname{qc}}(X)$ and $E$ in $D_{\operatorname{qc}}(\mathcal{A}_Y)$.
\end{proposition}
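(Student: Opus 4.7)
The plan is to mimic the proof of Proposition~\ref{prop:strict_form_nc_projection_formula}: first construct the candidate morphism via adjunction from the compatibility of $\mathbb{L}\breve{f}^\ast$ with the tensor action, and then verify it is an isomorphism by applying the conservative functor $\pi_{X,\ast}$ to reduce to the ordinary projection formula for schemes. A pleasant feature of the present variant is that the $\mathcal{A}$-module $E$ already lives upstairs, so at no point does one need to derived-pull-back an $\mathcal{A}$-module; this is why the flatness hypothesis present in Proposition~\ref{prop:strict_form_nc_projection_formula} disappears here.

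For the construction of the map, the analogue of \eqref{eq:'monoidal'} gives
\[
    \mathbb{L}\breve{f}^\ast\bigl(F\otimes^{\mathbb{L}}_{\mathcal{O}_X}\mathbb{R}\breve{f}_\ast E\bigr)\cong \mathbb{L}f^\ast F\otimes^{\mathbb{L}}_{\mathcal{O}_Y}\mathbb{L}\breve{f}^\ast\mathbb{R}\breve{f}_\ast E,
\]
and composing with $\mathrm{id}\otimes\varepsilon$, where $\varepsilon\colon\mathbb{L}\breve{f}^\ast\mathbb{R}\breve{f}_\ast E\to E$ is the counit, and then passing across the $(\mathbb{L}\breve{f}^\ast,\mathbb{R}\breve{f}_\ast)$-adjunction, produces the desired natural map in $D_{\operatorname{qc}}(\mathcal{A})$.

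To check that it is an isomorphism I would apply $\pi_{X,\ast}$, which reflects (quasi-)isomorphisms. Two compatibilities, both essentially formal at the level of underlying $\mathcal{O}$-modules, reduce matters to the commutative case: since $\pi_X\circ\breve{f}=f\circ\pi_Y$, one has $\pi_{X,\ast}\mathbb{R}\breve{f}_\ast\cong\mathbb{R}f_\ast\pi_{Y,\ast}$ by functoriality of pushforward; and the action~\eqref{eq:tensor-action} is, by construction, defined via the underlying $\mathcal{O}_X$-module structure, so $\pi_{X,\ast}\bigl(F\otimes^{\mathbb{L}}_{\mathcal{O}_X}M\bigr)\cong F\otimes^{\mathbb{L}}_{\mathcal{O}_X}\pi_{X,\ast}M$ for any $M\in D_{\operatorname{qc}}(\mathcal{A})$. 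After these substitutions, the claim becomes exactly the standard projection formula for the quasi-compact quasi-separated morphism $f\colon Y\to X$ applied to $F$ and $\pi_{Y,\ast}E$, which holds without any flatness assumption by \cite[\href{https://stacks.math.columbia.edu/tag/08EU}{Tag 08EU}]{stacks-project}.

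The only thing that really deserves care is the second compatibility above, namely that the tensor action~\eqref{eq:tensor-action} is `transparent' to the forgetful pushforward along the structure morphism at the derived level. Provided one models the action via K-flat resolutions of objects in $D(X)$, this is automatic because those resolutions remain K-flat after forgetting the $\mathcal{A}$-action; this is the only subtlety, and it is already implicit in the proof of Proposition~\ref{prop:strict_form_nc_projection_formula}. Once this is in hand, the rest is bookkeeping and the proof is genuinely `along the same lines'.
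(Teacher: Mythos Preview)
Your proposal is correct and is precisely what the paper intends: it gives no detailed argument for this proposition, merely noting that the proof is ``along the same lines'' as \Cref{prop:strict_form_nc_projection_formula}, and you have faithfully carried that out---construct the map by adjunction from the monoidal compatibility of $\mathbb{L}\breve{f}^\ast$, then reduce to the classical projection formula by applying the conservative $\pi_{X,\ast}$. Your explanation of why no flatness hypothesis is needed here (the $\mathcal{A}$-module already lives upstairs, so the step $\pi_{Y,\ast}\mathbb{L}\breve{f}^\ast F=\mathbb{L}f^\ast\pi_{X,\ast}F$ from the earlier proof never arises) is a helpful addition that the paper leaves implicit.
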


To end the section we note the following version of nc.\ flat base change.

\begin{proposition}\label{prop:nc_base_change}
    Let $(X,\mathcal{A})$ be a nc.\ scheme, $g\colon X^\prime \to X$ be a flat morphism of schemes, and $f\colon Y \to X$ be a quasi-compact quasi-separated morphism of schemes. Consider the base change diagram
    \begin{displaymath}
        \begin{tikzcd}[ampersand replacement=\&]
            {Y^\prime} \& Y \\
            {X^\prime} \& X\rlap{ .}
            \arrow["{g^\prime}", from=1-1, to=1-2]
            \arrow["{f^\prime}"', from=1-1, to=2-1]
            \arrow["\lrcorner"{anchor=center, pos=0.125}, draw=none, from=1-1, to=2-2]
            \arrow["f", from=1-2, to=2-2]
            \arrow["g"', from=2-1, to=2-2]
        \end{tikzcd}
    \end{displaymath}
    There is a canonical natural isomorphism
    \begin{equation}\label{eq: nc_flat}
        \mathbb{L} \Breve{g}^\ast \mathbb{R} \Breve{f}_\ast E \xrightarrow{\sim} \mathbb{R}\Breve{f^\prime}_\ast \mathbb{L} \Breve{g^\prime}^\ast E \quad \text{ (in $D_{\operatorname{qc}}(\mathcal{A}_{X'})$)} 
    \end{equation}
    for all $E$ in $D_{\operatorname{qc}}(\mathcal{A})$.
\end{proposition}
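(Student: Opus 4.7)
The plan is to follow exactly the template laid out in the proof of \Cref{prop:strict_form_nc_projection_formula}: first construct the comparison map in \eqref{eq: nc_flat} via adjunction, and then verify that it is an isomorphism by pushing down to $D_{\operatorname{qc}}(X')$ along the structure morphism $\pi_{X'}$ of $(X',\mathcal{A}_{X'})$ and reducing to the classical flat base change for (commutative) schemes.

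First, to produce the natural transformation $\mathbb{L}\breve{g}^{\ast}\mathbb{R}\breve{f}_{\ast} \to \mathbb{R}\breve{f'}_{\ast}\mathbb{L}\breve{g'}^{\ast}$, I would note that $\breve{f}\circ\breve{g'} = \breve{g}\circ\breve{f'}$ as strict morphisms of nc.\ schemes. Applying the usual $2$-categorical mate construction (push the unit $E \to \mathbb{R}\breve{g'}_{\ast}\mathbb{L}\breve{g'}^{\ast} E$ along $\mathbb{R}\breve{f}_{\ast}$, identify $\mathbb{R}\breve{f}_{\ast}\mathbb{R}\breve{g'}_{\ast} \simeq \mathbb{R}\breve{g}_{\ast}\mathbb{R}\breve{f'}_{\ast}$, and transpose across the $(\mathbb{L}\breve{g}^{\ast},\mathbb{R}\breve{g}_{\ast})$ adjunction) yields the required map.

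Next, to show this map is an isomorphism, I would apply the structure pushforward $\pi_{X',\ast}\colon D_{\operatorname{qc}}(\mathcal{A}_{X'}) \to D_{\operatorname{qc}}(X')$, which is just the forgetful functor and hence reflects (quasi-)isomorphisms. The reduction to the commutative setting then hinges on four compatibility isomorphisms:
\begin{displaymath}
\pi_{X',\ast}\,\mathbb{L}\breve{g}^{\ast} \simeq \mathbb{L}g^{\ast}\,\pi_{X,\ast}, \qquad \pi_{Y',\ast}\,\mathbb{L}\breve{g'}^{\ast} \simeq \mathbb{L}{g'}^{\ast}\,\pi_{Y,\ast},
\end{displaymath}
\begin{displaymath}
\pi_{X,\ast}\,\mathbb{R}\breve{f}_{\ast} \simeq \mathbb{R}f_{\ast}\,\pi_{Y,\ast}, \qquad \pi_{X',\ast}\,\mathbb{R}\breve{f'}_{\ast} \simeq \mathbb{R}f'_{\ast}\,\pi_{Y',\ast}.
\end{displaymath}
The two pullback compatibilities rely on flatness of $g$ (and hence of its base change $g'$), so that $\mathbb{L}g^{\ast}$ and $\mathbb{L}\breve{g}^{\ast}$ agree with their non-derived counterparts and visibly commute with the forgetful functor on the underlying $\mathcal{O}$-modules. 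The pushforward compatibilities are essentially formal, since $\mathbb{R}f_{\ast}$ and $\mathbb{R}\breve{f}_{\ast}$ are computed on the same underlying complex of abelian sheaves via, e.g., $K$-injective resolutions, with the $\mathcal{A}$-action simply carried along. Once these identifications are in place, $\pi_{X',\ast}$ applied to \eqref{eq: nc_flat} becomes precisely the classical flat base change isomorphism $\mathbb{L}g^{\ast}\mathbb{R}f_{\ast} \xrightarrow{\sim} \mathbb{R}f'_{\ast}\mathbb{L}{g'}^{\ast}$ for qcqs morphisms of schemes, see \cite[\href{https://stacks.math.columbia.edu/tag/08IB}{Tag 08IB}]{stacks-project}, which concludes the argument.

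The main obstacle I anticipate is the bookkeeping required to verify that the natural transformation built by adjunction on the nc.\ side corresponds, under $\pi_{X',\ast}$, to the natural transformation whose pointwise evaluation is the classical flat base change map. Both arise from the same $2$-categorical mate construction applied to the (strictly commutative) Cartesian square, so this ultimately amounts to a unit/counit chase; the flatness of $g$ is exactly what makes this chase go through without needing to replace $g^{\ast}$ by a derived version at any intermediate step.
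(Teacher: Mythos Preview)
Your proposal is correct and follows essentially the same route as the paper: construct the base change morphism via the mate/adjunction formalism, then check it is an isomorphism by applying the conservative forgetful functor $\pi_{X',\ast}$ and invoking the classical flat base change \cite[\href{https://stacks.math.columbia.edu/tag/08IB}{Tag 08IB}]{stacks-project}. The only cosmetic difference is that the paper suggests using $K$-flasque resolutions (which compute pushforward and are stable under flat pullback, allowing one to handle all four compatibilities at once on the level of underived functors), whereas you separate out the four squares and treat the pushforward side with $K$-injectives; both work.
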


\begin{proof}
    That the morphism in \eqref{eq: nc_flat} exists follows by the same argument as in the commutative case, see e.g.\ \cite[\href{https://stacks.math.columbia.edu/tag/08HY}{Tag 08HY}]{stacks-project}. 
    Again, as in \Cref{prop:strict_form_nc_projection_formula}, the proof goes by reducing\footnote{To see that $\pi_\ast$ of \eqref{eq: nc_flat} yields the commutative flat base change map, it is easiest to pick $K$-flasque resolutions by which one can reduce to checking this with non-derived functors.} to the `commutative' flat base change formula of the underlying schemes \cite[\href{https://stacks.math.columbia.edu/tag/08IB}{Tag 08IB}]{stacks-project}.
\end{proof}

\section{Big Rouquier dimension and diagonal dimension}
\label{sec:big_rouq_and_diagonal_dim}

It is convenient to consider relative versions of Rouquier dimension, where one fixes a subcategory where the generating object is forced to live.
The following is the `big' version, where we allow for arbitrary small coproducts.

\begin{definition}\label{def:relative_dimension}
    Let $\mathcal{T}$ be a triangulated category with coproducts and $\mathcal{S}$ be a subcategory. We define the \textbf{$\oplus$-dimension of $\mathcal{T}$ with respect to $\mathcal{S}$}, denoted $\dim_\oplus (\mathcal{T},\mathcal{S})$, to be the smallest integer $n\geq 0$ such that $\mathcal{T} = \overline{\langle G \rangle}_{n+1}$ for some object $G$ of $\mathcal{S}$ and to be $\infty$ when no such $n$ exists. 
\end{definition}

Next, we extend \cite[Definition 2.15]{Ballard/Favero:2012} to a relative setting.
To state it, first recall the following commonly used notation.
For a morphism of schemes\footnote{As $\times_X$ looks relatively `unsightly' we will switch to $S$ for a base scheme instead of $X$ at times.} $s \colon X\to S$ with the natural projections denoted $p_i\colon X\times_S X \to X$  we define $G_1 \boxtimes_S G_2:=\mathbb{L} p^\ast_1 G_1 \otimes^{\mathbb{L}}_{\mathcal{O}_{X\times_S X}} \mathbb{L} p^\ast_2 G_2 \in D_{\operatorname{qc}}(X\times_S X)$ for any two objects $G_1$ and $G_2$ in $D_{\operatorname{qc}}(X)$. 

\begin{definition}\label{def:diagonal_dimension}
    Let $s \colon X\to S$ be a morphism of schemes and $\mathcal{S}$ be a subcategory of $D_{\operatorname{qc}}(X)$. 
    The \textbf{diagonal dimension of $s$ with respect to $\mathcal{S}$}, denoted $\dim_\Delta( s,\mathcal{S})$, is defined to be the smallest integer $n\geq 0$ for which $\mathbb{R}\Delta_\ast \mathcal{O}_X$ belongs to $\langle G_1 \boxtimes_S G_2 \rangle_{n}$ for some $G_1$ and $G_2$ in $\mathcal{S}$.
    It is defined to be $\infty$ when no such $n$ exists.
\end{definition}

\begin{lemma}\label{lem:alt_Delta_dim}
    With notation as in \Cref{def:diagonal_dimension}.
    \begin{enumerate}
        \item If $\mathcal{S}$ is closed under finite coproducts, then $\dim_\Delta (s,\mathcal{S})$ is equal to the smallest integer $n$ (when it exists) for which $\mathbb{R}\Delta_\ast \mathcal{O}_X$ belongs to $\langle G \boxtimes_S G \rangle_{n}$ for $G$ in $\mathcal{S}$.
        \item If $\mathcal{S}^\prime$ is contained in $\mathcal{S}$, then $\dim_\Delta (s,\mathcal{S}) \leq \dim_\Delta (s,\mathcal{S}^\prime)$.
    \end{enumerate}
\end{lemma}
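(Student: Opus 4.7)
The plan is that both statements are essentially book-keeping around the definition; the only genuine content is the compatibility of the external product $\boxtimes_S$ with direct sums. I will treat the two parts separately.

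For (2), I would simply unfold the definition: if the minimum defining $\dim_\Delta(s,\mathcal{S}')$ is realized by some pair $G_1,G_2\in\mathcal{S}'$ with $\mathbb{R}\Delta_\ast\mathcal{O}_X\in\langle G_1\boxtimes_S G_2\rangle_n$, then since $\mathcal{S}'\subseteq\mathcal{S}$ this same pair witnesses $\dim_\Delta(s,\mathcal{S})\leq n$. Taking infima over all valid $n$ on the right gives the inequality. The case where $\dim_\Delta(s,\mathcal{S}')=\infty$ is trivial.

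For (1), let $m$ denote the minimum integer (possibly $\infty$) such that $\mathbb{R}\Delta_\ast\mathcal{O}_X\in\langle G\boxtimes_S G\rangle_m$ for some $G\in\mathcal{S}$. The inequality $\dim_\Delta(s,\mathcal{S})\leq m$ is immediate by taking $G_1=G_2=G$. For the reverse, suppose $\mathbb{R}\Delta_\ast\mathcal{O}_X\in\langle G_1\boxtimes_S G_2\rangle_n$ with $G_1,G_2\in\mathcal{S}$. The key step is to set $G:=G_1\oplus G_2$, which lies in $\mathcal{S}$ by the closure assumption, and observe that
\[
G\boxtimes_S G \;\simeq\; \bigoplus_{i,j\in\{1,2\}} G_i\boxtimes_S G_j
\]
because $\mathbb{L}p_1^\ast$ and $\mathbb{L}p_2^\ast$ preserve finite coproducts and the derived tensor product distributes over them. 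In particular $G_1\boxtimes_S G_2$ is a direct summand of $G\boxtimes_S G$, hence belongs to $\operatorname{add}(G\boxtimes_S G)=\langle G\boxtimes_S G\rangle_1$. Consequently $\langle G_1\boxtimes_S G_2\rangle_n\subseteq \langle G\boxtimes_S G\rangle_n$, and so $\mathbb{R}\Delta_\ast\mathcal{O}_X\in\langle G\boxtimes_S G\rangle_n$, yielding $m\leq n$. Minimizing over $n$ gives $m\leq \dim_\Delta(s,\mathcal{S})$.

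There is no real obstacle here; the only point worth flagging is the distributivity of $\boxtimes_S$ over finite coproducts, which is where the hypothesis that $\mathcal{S}$ is closed under finite coproducts is used in part (1). Everything else is a direct appeal to the definitions of $\dim_\Delta$ and of $\langle-\rangle_n$.
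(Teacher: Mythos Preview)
Your proof is correct and follows essentially the same approach as the paper: for part (1) you set $G:=G_1\oplus G_2$ and use that $G_1\boxtimes_S G_2$ is a summand of $G\boxtimes_S G$ to conclude $\langle G_1\boxtimes_S G_2\rangle_n\subseteq\langle G\boxtimes_S G\rangle_n$, and part (2) is immediate from the definition. You spell out the distributivity of $\boxtimes_S$ over finite coproducts a bit more explicitly than the paper does, but the argument is the same.
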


\begin{proof}
    The second claim follows immediately from the definition, so we check the first claim. 
    We set $m:=\dim_\Delta (s, \mathcal{S})$.
    Clearly $m\leq n$. 
    For the opposite inequality, observe that if $\mathbb{R}\Delta_\ast \mathcal{O}_X$ belongs to $\langle G_1 \boxtimes_S G_2 \rangle_{m+1}$ with $G_1$ and $G_2$ objects of $\mathcal{S}$.
    Then, $G_1\boxtimes_S G_2$ belongs to $\langle G \boxtimes_S G \rangle_1$ where $G:=G_1 \oplus G_2\in \mathcal{S}$. 
    Hence, if $\mathbb{R}\Delta_\ast \mathcal{O}_X$ belongs to $\langle G_1 \boxtimes_S G_2 \rangle_{m}$, then also to $\langle G \boxtimes_S G \rangle_{m}$, showing that $n\leq m$.
\end{proof}

Let us give some examples of bounds on diagonal dimension.

\begin{example}
    Suppose $X$ is a variety over a field $\mathsf{k}$ and let $s \colon X \to \operatorname{Spec}(\mathsf{k})$ denote the natural map. 
    
    If the ground field $\mathsf{k}$ is perfect, then, by \cite[Theorem 4.18]{Elagin/Lunts/Schnurer:2020}, $G\boxtimes_{\mathsf{k}} G$ is a strong generator for $D^b_{\operatorname{coh}}(X\times_k X)$ whenever $G$ is one for $D^b_{\operatorname{coh}}(X)$. In particular, this ensures that $A\boxtimes_{\mathsf{k}} B$ lies in $D^b_{\operatorname{coh}}(X\times_{\mathsf{k}} X)$ for $A$ and $B$ objects in $D^b_{\operatorname{coh}}(X)$.
    In particular, we have $\dim_\Delta(s,D^b_{\operatorname{coh}}(X))=\dim_\Delta(X)$ where the latter is the diagonal dimension as defined in \cite[Definition 2.15]{Ballard/Favero:2012}.     

    When the field $\mathsf{k}$ is not perfect one could/should pick $\mathcal{S}$ as a subcategory of $D^b_{\operatorname{coh}}(X\times_{\mathsf{k}}X)$ in \Cref{def:diagonal_dimension} in order to obtain a relative version of loc.\ cit.\ But, of course, when $X$ is regular one also has $\dim_\Delta(s,D^b_{\operatorname{coh}}(X))=\dim_\Delta(X)$.
\end{example}

\begin{example}\label{ex:diagdim_etale}
    Let $f\colon X \to S$ be a separated \'{e}tale morphism of schemes. Then $\dim_\Delta(f,\operatorname{Perf}(X))=1$ as the diagonal morphism $\Delta\colon X \to X\times_S X$ is both an open and closed immersion. In fact, $\Delta$ is the inclusion of $X$ in $X\times_S X \cong X \sqcup Z$ for some scheme $Z$. Consequently, $\mathbb{R}\Delta_\ast \mathcal{O}_X=\Delta_\ast \mathcal{O}_X$ is a direct summand of $\mathcal{O}_{X\times_S X}$ and so belongs to $\langle \mathcal{O}_X \boxtimes_S \mathcal{O}_X \rangle_{1}$ (note that $\mathcal{O}_X \boxtimes_S \mathcal{O}_X = \mathcal{O}_{X\times_S X}$).
\end{example}

\begin{example}
    Let $S$ be a scheme. Suppose $\mathcal{E}$ is a locally free sheaf of rank $r+1$ over $S$. Denote the associated projective bundle by $X := \mathbb{P}(\mathcal{E})$ with $p \colon X \to S$ the associated morphism. Then $\operatorname{dim}_{\Delta}( p, \operatorname{Perf}(X)) \leq r$, as can be shown by explicitly constructing an exact sequence
    \begin{displaymath}
        0 \to \mathcal{O}_X(-n) \boxtimes \Omega^n(n) \to \cdots \to \mathcal{O}_X(-1) \boxtimes \Omega(1) \to \mathcal{O}_{X \times_S X} \to \mathcal{O}_{\Delta} \to 0,
    \end{displaymath}
    similar to Be\u{\i}linson's famous result from \cite{Beilinson:1978}.
    The construction starts with the exact sequence 
    \begin{displaymath}
         \mathcal{O}_X(-1) \boxtimes \Omega(1) \xrightarrow{\Phi} \mathcal{O}_{X \times_S X} \to \mathcal{O}_{\Delta} \to 0
    \end{displaymath}
    obtained from the the Euler sequence (see \cite[Theorem 4.5.13]{Brandenburg:2014} for the proof of a very general version of such).
    The Koszul complex associated to $\Phi$ then yields the required exact sequence.
\end{example}

\begin{example}
    The following is a slight generalization/reformulation of \cite[\href{https://stacks.math.columbia.edu/tag/0FZ5}{Tag 0FZ5}]
    {stacks-project} and a part of \cite[Lemma 2.16]{Ballard/Favero:2012}.

    Let $X$ be a separated finite type scheme over some Noetherian affine base scheme $S$ and let $s\colon X\to S$ denote the natural morphism. 
    Assume $X$ has the resolution property and $X\times_S X$ is regular of finite Krull dimension. 
    Concrete occurrence of this set-up are when $S$ is regular and $s$ is smooth (see e.g.\ \cite[\href{https://stacks.math.columbia.edu/tag/036D}{Tag 036D}]{stacks-project} and \cite[\href{https://stacks.math.columbia.edu/tag/0F8A}{Tag 0F8A}]{stacks-project}); e.g.\ $X$ is some smooth variety over a field. 
    Then, by \cite[\href{https://stacks.math.columbia.edu/tag/0FZ2}{Tag 0FZ2}]{stacks-project}, as $X$ has the resolution property, there exists a resolution
    \begin{displaymath}
        \cdots \to \mathcal{F}_i \boxtimes_S \mathcal{G}_i \to \cdots \to \mathcal{F}_0 \boxtimes_S \mathcal{G}_0 \to \Delta_\ast \mathcal{O}_X \to 0
    \end{displaymath}
    where the $\mathcal{F}_i$, $\mathcal{G}_i$ are finite locally free sheaves over $X$.
    Furthermore, as $X\times_S X$ is regular of finite Krull dimension, we have $\operatorname{Ext}^j_{X\times_S X} (\mathcal{F},\mathcal{G}) = 0$ for any coherent modules $\mathcal{F}$, $\mathcal{G}$ and all $j > \dim (X\times_S X)=:d$, see e.g.\ \cite[\href{https://stacks.math.columbia.edu/tag/0FZ3}{Tag 0FZ3}]{stacks-project}. 
    So, consider the following (stupid) truncation:
    \begin{displaymath}
        P:= (0\to \mathcal{F}_{d} \boxtimes_S \mathcal{G}_{d} \to \cdots \to \mathcal{F}_0 \boxtimes_R \mathcal{G}_0 \to 0).
    \end{displaymath}
    This complex only has nonzero cohomology in degrees zero (where it is $\Delta_\ast \mathcal{O}_X$) and $-d$.
    It follows from \cite[\href{https://stacks.math.columbia.edu/tag/0FZ4}{Tag 0FZ4}]{stacks-project} that $\Delta_\ast \mathcal{O}_X$ is a direct summand of $P$. 
    Let $Q:= (\oplus_{i=0}^{d} \mathcal{F}_i )\boxtimes_S (\oplus_{i=0}^{d} \mathcal{G}_i )$.
    Then $\Delta_\ast \mathcal{O}_X$ is in $\langle Q \rangle_{d}$, which implies that $\dim_\Delta(s,\operatorname{Perf}(X))\leq d=\dim X\times_S X$.
\end{example}

A morphism having finite diagonal dimension allows one to obtain generators of the source from those of the target. 

\begin{proposition}\label{prop:pullback_generation_along_flat_separated}
    Let $(S,\mathcal{A})$ be a Noether nc.\ scheme and $s\colon X \to S$ be a separated flat morphism of Noetherian schemes. Consider subcategories $\mathcal{S}\subseteq D_{\operatorname{qc}}(X)$ and $\mathcal{T}\subseteq D_{\operatorname{qc}}(\mathcal{A})$ where $\mathcal{S}$ is closed under finite sums. Then the following inequality holds:
    \begin{displaymath}
        \dim_{\oplus}(D_{\operatorname{qc}}(\mathcal{A}_X), \mathcal{S}\otimes^{\mathbb{L}}_{\mathcal{O}_{ X}} \mathbb{L} \breve{s}^\ast \mathcal{T}) \leq \dim_\Delta (s,\mathcal{S}) \cdot (\dim_{\oplus}(D_{\operatorname{qc}}( \mathcal{A}), \mathcal{T}) + 1) -1.
    \end{displaymath}
    In fact, when the right hand side of the inequality above is finite, then $G\otimes^{\mathbb{L}}_{\mathcal{O}_{ X}} \mathbb{L} \breve{s}^\ast H$ is a strong $\oplus$-generator for $D_{\operatorname{qc}}(\mathcal{A}_X)$ giving the sought inequality, where $H$ is as in \Cref{def:relative_dimension} and $G:=G_1\oplus G_2$ is as in \Cref{def:diagonal_dimension}.
\end{proposition}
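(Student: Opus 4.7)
The plan is to execute the standard ``diagonal trick'' in the noncommutative setting. Set $d := \dim_\Delta(s,\mathcal{S})$ and $n := \dim_\oplus(D_{\operatorname{qc}}(\mathcal{A}),\mathcal{T})$; one may assume both are finite. By \Cref{lem:alt_Delta_dim}, since $\mathcal{S}$ is closed under finite coproducts, I pick $G := G_1 \oplus G_2 \in \mathcal{S}$ with $\mathbb{R}\Delta_\ast \mathcal{O}_X \in \langle G \boxtimes_S G \rangle_d$, and pick $H \in \mathcal{T}$ with $D_{\operatorname{qc}}(\mathcal{A}) = \overline{\langle H \rangle}_{n+1}$. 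The goal is to show that $G \otimes_{\mathcal{O}_X}^{\mathbb{L}} \mathbb{L}\breve{s}^\ast H$ is a strong $\oplus$-generator of $D_{\operatorname{qc}}(\mathcal{A}_X)$ with the asserted complexity.

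The first main step is to establish the diagonal identity
\begin{displaymath}
E \cong \mathbb{R}\breve{p}_{1,\ast}\bigl(\mathbb{R}\Delta_\ast \mathcal{O}_X \otimes_{\mathcal{O}_{X\times_S X}}^{\mathbb{L}} \mathbb{L}\breve{p}_2^\ast E\bigr) \quad\text{in } D_{\operatorname{qc}}(\mathcal{A}_X),
\end{displaymath}
obtained by combining the noncommutative projection formula along $\breve{\Delta}$ (\Cref{prop:strict_form_nc_projection_formula}) with the relations $p_1 \Delta = p_2 \Delta = \operatorname{id}_X$. Inserting the decomposition $\mathbb{R}\Delta_\ast\mathcal{O}_X \in \langle G\boxtimes_S G\rangle_d$ then places $E$ in the $d$-th layer of the thick closure of objects of the form $\mathbb{R}\breve{p}_{1,\ast}\bigl(\mathbb{L}p_1^\ast G \otimes^{\mathbb{L}} \mathbb{L}p_2^\ast G \otimes^{\mathbb{L}} \mathbb{L}\breve{p}_2^\ast E\bigr)$. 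Successively applying the pullback--tensor compatibility \eqref{eq:'monoidal'}, the projection formula \Cref{prop:tensor_nc_projection_formula} for $\breve{p}_1$, and flat base change \Cref{prop:nc_base_change} (available because $s$, hence $p_2$, is flat), each such object rewrites as
\begin{displaymath}
G \otimes_{\mathcal{O}_X}^{\mathbb{L}} \mathbb{L}\breve{s}^\ast \mathbb{R}\breve{s}_\ast \bigl(G \otimes_{\mathcal{O}_X}^{\mathbb{L}} E\bigr).
\end{displaymath}

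To conclude, note that $\mathbb{R}\breve{s}_\ast(G \otimes_{\mathcal{O}_X}^{\mathbb{L}} E)$ lies in $D_{\operatorname{qc}}(\mathcal{A}) = \overline{\langle H\rangle}_{n+1}$, and since both $\mathbb{L}\breve{s}^\ast$ and $G \otimes_{\mathcal{O}_X}^{\mathbb{L}}(-)$ are triangulated functors preserving small coproducts and direct summands, the previous expression lies in $\overline{\langle G \otimes_{\mathcal{O}_X}^{\mathbb{L}} \mathbb{L}\breve{s}^\ast H \rangle}_{n+1}$. Combining with the $d$ layers coming from the diagonal decomposition yields $E \in \overline{\langle G \otimes_{\mathcal{O}_X}^{\mathbb{L}} \mathbb{L}\breve{s}^\ast H \rangle}_{d(n+1)}$, which is precisely the claimed bound. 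The step I expect to be most delicate is the diagonal identity: \Cref{prop:strict_form_nc_projection_formula} requires either $\Delta$ flat or $\mathcal{A}_{X\times_S X}$ flat over $X\times_S X$, so some care (or an implicit flatness input on $\mathcal{A}$) may be needed to invoke it for the possibly non-flat diagonal of a merely flat $s$.
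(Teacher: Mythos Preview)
Your proposal is essentially identical to the paper's proof (with the roles of $p_1$ and $p_2$ interchanged, which is immaterial by symmetry of $G\boxtimes_S G$).

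Regarding your concern about the diagonal identity: the paper invokes \Cref{prop:strict_form_nc_projection_formula} at exactly the same spot, and you are right that its stated hypotheses (either $\Delta$ flat or $\mathcal{A}_{X\times_S X}$ flat over $\mathcal{O}_{X\times_S X}$) are not directly available. However, the formula does hold for the particular object $F=\breve{p}_i^\ast E$ without any extra flatness input. The natural map in \eqref{eq: ncag proj formula} exists unconditionally, and to see it is an isomorphism one applies $\pi_\ast$ and reduces to the commutative projection formula for $\Delta$; the only place flatness enters in the proof of \Cref{prop:strict_form_nc_projection_formula} is to identify $\pi_{X,\ast}\mathbb{L}\breve{\Delta}^\ast F$ with $\mathbb{L}\Delta^\ast\pi_{X\times_S X,\ast}F$. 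For $F=\breve{p}_i^\ast E$ both sides are simply the underlying $\mathcal{O}_X$-complex of $E$, because $\breve{p}_i\circ\breve{\Delta}=\operatorname{id}$ (and $p_i$ is flat). So no implicit flatness assumption on $\mathcal{A}$ is needed, and your argument goes through.
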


\begin{proof}
    Clearly we may assume $n:=\dim_\Delta (s,\mathcal{S})$ and $m:=\dim_{\oplus}(D_{\operatorname{qc}}(\mathcal{A}),\mathcal{T})$ are finite.
    Thus, by \Cref{def:diagonal_dimension} and \Cref{lem:alt_Delta_dim}, there exists $G$ in $\mathcal{S}$ with $\Delta_\ast \mathcal{O}_X=\mathbb{R}\Delta_\ast \mathcal{O}_X\in\langle G \boxtimes_S G \rangle_n$, where $\Delta \colon X \to X\times_S X$ denotes the diagonal morphism, and $H$ is an object of $\mathcal{T}$ with $D_{\operatorname{qc}}(\mathcal{A})=\overline{\langle H\rangle}_{m+1}$.
    It suffices to show that 
    \begin{displaymath}
        D_{\operatorname{qc}}(\mathcal{A}_X)=\overline{\langle G\otimes^{\mathbb{L}} \breve{s}^\ast H\rangle}_{n(m+1)}
    \end{displaymath}
    as the inequality follows from this. Consider the pullback square:
    \begin{displaymath}
        \begin{tikzcd}[ampersand replacement=\&]
            {X\times_S X} \& X \\
            X \& S\rlap{ .}
    	\arrow["{p_2}", from=1-1, to=1-2]
    	\arrow["{p_1}"', from=1-1, to=2-1]
    	\arrow["s", from=1-2, to=2-2]
    	\arrow["s"', from=2-1, to=2-2]
     	\arrow["\lrcorner"{anchor=center, pos=0.125}, draw=none, from=1-1, to=2-2]
        \end{tikzcd}
    \end{displaymath}
    Note that all the morphisms appearing are flat. Fix an arbitrary object $E\in D_{\operatorname{qc}} ( \mathcal{A}_X)$.
    As most tensor products will be over $X\times_S X$, we simply write $\otimes$ instead of $\otimes_{\mathcal{O}_{X\times_S X}}$ for the rest of this proof.
    We have the following string of implications:
    \begin{align*}
        \Delta_\ast  \mathcal{O}_X &\in \langle p^\ast_1 G \otimes^{\mathbb{L}}p^\ast_2 G \rangle_n \\
        \implies \Delta_\ast \mathcal{O}_X \otimes^{\mathbb{L}} \Breve{p}_1^\ast E &\in \langle p^\ast_1 G \otimes^{\mathbb{L}} p^\ast_2 G \otimes^{\mathbb{L}} \Breve{p}_1^\ast E\rangle_n \\
        \implies \mathbb{R}\Breve{p}_{2,\ast} ( \Delta_\ast \mathcal{O}_X \otimes^{\mathbb{L}} \Breve{p}_1^\ast E) &\in \langle \mathbb{R}\Breve{p}_{2,\ast} (  p^\ast_1 G \otimes^{\mathbb{L}} p^\ast_2 G \otimes^\mathbb{L} \Breve{p}_1^\ast E)\rangle_n.
    \end{align*}
    Then, using the projection formula (\Cref{prop:strict_form_nc_projection_formula}) we obtain:
    \begin{displaymath}
        \begin{aligned}
            \mathbb{R}\Breve{p}_{2,\ast} ( \Delta_\ast \mathcal{O}_X \otimes^{\mathbb{L}} \Breve{p}_1^\ast E) &\cong \mathbb{R}\Breve{p}_{2,\ast} (\Breve{\Delta}_\ast \mathbb{L} \Breve{\Delta}^\ast \Breve{p}_1^\ast E) \\&\cong \mathbb{R}(\Breve{p_2} \circ \Breve{\Delta})_\ast \mathbb{L} (\Breve{p_1} \circ \Breve{\Delta})^\ast  E  \\&\cong E \qquad ( \operatorname{id}_X = \breve{p_i} \circ \breve{\Delta}).
        \end{aligned}        
    \end{displaymath}
    On the other hand, using \Cref{prop:nc_base_change,prop:tensor_nc_projection_formula}, we obtain another isomorphism:
    \begin{displaymath}
        \begin{aligned}
            \mathbb{R}\Breve{p}_{2,\ast}  ( p^\ast_1 G \otimes^{\mathbb{L}} p^\ast_2 G \otimes^\mathbb{L} \Breve{p}_1^\ast E) &\cong \mathbb{R}\Breve{p}_{2,\ast} ( p^\ast_2 G \otimes^{\mathbb{L}} (p^\ast_1 G \otimes^\mathbb{L} \Breve{p}_1^\ast E))
            \\&\cong G\otimes^\mathbb{L}_{\mathcal{O}_X} \mathbb{R}\Breve{p}_{2,\ast} ( p^\ast_1 G \otimes^\mathbb{L} \Breve{p}_1^\ast E)
            \\&\cong G\otimes^\mathbb{L}_{\mathcal{O}_X} \mathbb{R}\Breve{p}_{2,\ast} \Breve{p}_1^\ast ( G \otimes^\mathbb{L}_{\mathcal{O}_X} E)
            \\&\cong G\otimes^\mathbb{L}_{\mathcal{O}_X} \Breve{s}^\ast \mathbb{R} \Breve{s}_\ast ( G \otimes^\mathbb{L}_{\mathcal{O}_X} E).
        \end{aligned}
    \end{displaymath}
    As $\mathbb{R} \Breve{s}_\ast ( G \otimes^\mathbb{L}_{\mathcal{O}_X} E)\in\overline{\langle H \rangle}_{m+1}$ by assumption, we have $\Breve{s}^\ast \mathbb{R} \Breve{s}_\ast ( G \otimes^\mathbb{L}_{\mathcal{O}_X} E)\in\overline{\langle \breve{s}^\ast H \rangle}_{m+1}$. 
    Consequently, we see that $E$ belongs to $\overline{\langle  G\otimes^{\mathbb{L}} \breve{s}^\ast H \rangle}_{n(m+1)}$ as was to be shown.
\end{proof}

\begin{corollary}\label{cor:separated_flat_diagonal_concrete_cases}
    With notation as in \Cref{prop:pullback_generation_along_flat_separated} and with the shortenings $$\dim_{\oplus}(D_{\operatorname{qc}}(\mathcal{A}_X), D^{-}_{\operatorname{coh}}):=\dim_{\oplus}(D_{\operatorname{qc}}(\mathcal{A}_X), D^{-}_{\operatorname{coh}}(\mathcal{A}_X))\text{, etc.}$$
    The following inequalities hold:
    \begin{align*}
        \dim_{\oplus}(D_{\operatorname{qc}}(\mathcal{A}_X), D^{-}_{\operatorname{coh}}) \leq \dim_\Delta (s,&D^{-}_{\operatorname{coh}}(S)) \cdot (\dim_{\oplus}(D_{\operatorname{qc}}( \mathcal{A}), D^b_{\operatorname{coh}}) + 1) -1,\\
        \dim_{\oplus}(D_{\operatorname{qc}}(\mathcal{A}_X), D^b_{\operatorname{coh}}) \leq \dim_\Delta (s,&D^b_{\operatorname{coh}}(S)) \cdot (\dim_{\oplus}(D_{\operatorname{qc}}(\mathcal{A}), \operatorname{Perf}) + 1) -1,\\
        \dim_{\oplus}(D_{\operatorname{qc}}(\mathcal{A}_X), D^b_{\operatorname{coh}}) \leq \dim_\Delta (s,&\operatorname{Perf}(S)) \cdot (\dim_{\oplus}(D_{\operatorname{qc}}( \mathcal{A}), D^b_{\operatorname{coh}}) + 1) -1,\\
        \dim_{\oplus}(D_{\operatorname{qc}}(\mathcal{A}_X), \operatorname{Perf}) \leq \dim_\Delta(s, &\operatorname{Perf}(S)) \cdot (\dim_{\oplus}(D_{\operatorname{qc}}(\mathcal{A}),\operatorname{Perf}) + 1) -1.
    \end{align*}
\end{corollary}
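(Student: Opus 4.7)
The plan is to apply \Cref{prop:pullback_generation_along_flat_separated} four times, each time specializing the subcategories $\mathcal{S}\subseteq D_{\operatorname{qc}}(X)$ and $\mathcal{T}\subseteq D_{\operatorname{qc}}(\mathcal{A})$ to match the categories in the corresponding inequality. Concretely, for lines one through four I would take $(\mathcal{S},\mathcal{T})$ to be, respectively, $(D^{-}_{\operatorname{coh}}(X), D^b_{\operatorname{coh}}(\mathcal{A}))$, $(D^b_{\operatorname{coh}}(X), \operatorname{Perf}(\mathcal{A}))$, $(\operatorname{Perf}(X), D^b_{\operatorname{coh}}(\mathcal{A}))$, and $(\operatorname{Perf}(X), \operatorname{Perf}(\mathcal{A}))$. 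In each case \Cref{prop:pullback_generation_along_flat_separated} produces a strong $\oplus$-generator of the form $G\otimes_{\mathcal{O}_X}^{\mathbb{L}}\mathbb{L}\breve{s}^\ast H$ with $G\in\mathcal{S}$ and $H\in\mathcal{T}$, together with a bound on the number of cones that coincides with the right-hand side of the corresponding inequality.

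Next I would verify that, in each case, this witness already lies in the subcategory of $D_{\operatorname{qc}}(\mathcal{A}_X)$ appearing on the left-hand side, so it is admissible for the corresponding $\oplus$-dimension. This reduces to elementary bookkeeping once flatness of $s$ is brought in: since $\mathbb{L}\breve{s}^\ast=\breve{s}^\ast$ is exact, it sends $D^b_{\operatorname{coh}}(\mathcal{A})$ into $D^b_{\operatorname{coh}}(\mathcal{A}_X)$ and $\operatorname{Perf}(\mathcal{A})$ into $\operatorname{Perf}(\mathcal{A}_X)$; tensoring over $\mathcal{O}_X$ with a perfect complex preserves bounded coherent cohomology (finite Tor amplitude) and sends perfects to perfects; and for line one, a local resolution of $G\in D^{-}_{\operatorname{coh}}(X)$ by finite locally free sheaves turns $G\otimes_{\mathcal{O}_X}^{\mathbb{L}}\breve{s}^\ast H$ into a bounded-above complex with coherent cohomology.

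I do not anticipate a serious obstacle here, as the real content is packaged into \Cref{prop:pullback_generation_along_flat_separated} and \Cref{lem:alt_Delta_dim}; what remains is only the containment bookkeeping described above. If anything, the subtlest case is the first line, where one has to ensure that the derived tensor with an unbounded-below complex preserves both boundedness above and coherence of cohomology; this follows from the fact that $X$ is Noetherian (so coherent sheaves admit local finite free resolutions) combined with boundedness of $\breve{s}^\ast H$.
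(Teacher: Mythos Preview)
Your approach—applying \Cref{prop:pullback_generation_along_flat_separated} with the four natural choices of $(\mathcal{S},\mathcal{T})$ and then checking that the witness $G\otimes^{\mathbb{L}}_{\mathcal{O}_X}\breve{s}^\ast H$ lands in the target subcategory—is exactly the (implicit) argument the paper has in mind, and your bookkeeping for lines one, three and four is correct (line three is precisely \Cref{lem:tensoring_perfect}, and line one is the restriction of the action noted in the proof of \Cref{lem:bound_above_to_bounded_above_if_resolution_property}).

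There is, however, a gap at line two. You write that ``tensoring over $\mathcal{O}_X$ with a perfect complex preserves bounded coherent cohomology (finite Tor amplitude)'', but in line two neither factor is perfect over $\mathcal{O}_X$: the object $G$ lies only in $D^b_{\operatorname{coh}}(X)$, while $\breve{s}^\ast H$ is perfect over $\mathcal{A}_X$, not over $\mathcal{O}_X$. A perfect $\mathcal{A}_X$-complex is locally a bounded complex of summands of free $\mathcal{A}_X$-modules, and since $\mathcal{A}_X$ is only assumed coherent (not flat) over $\mathcal{O}_X$, such objects need not have finite Tor-amplitude as $\mathcal{O}_X$-complexes. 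Concretely, already for $s=\operatorname{id}$, $R=k[x,y]/(xy)$ and $A=R/(x)$, the object $R/(y)\otimes^{\mathbb{L}}_R A$ has infinitely many nonvanishing cohomology groups, so the containment $D^b_{\operatorname{coh}}(X)\otimes^{\mathbb{L}}_{\mathcal{O}_X}\operatorname{Perf}(\mathcal{A}_X)\subseteq D^b_{\operatorname{coh}}(\mathcal{A}_X)$ fails in general. Your justification for line two therefore does not go through as written; it would require an extra hypothesis such as flatness of $\mathcal{A}$ over $\mathcal{O}_S$ (so that $\mathcal{A}_X$ is flat over $\mathcal{O}_X$ and the finite Tor-amplitude argument becomes valid).
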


\begin{example}\label{ex:nc_diagonal_dimension}
    Let $(S,\mathcal{A})$ be a Noether nc.\ scheme and let
    $f \colon X\to S$ be a separated flat morphism of Noetherian schemes. 
    \begin{enumerate}
        \item  Suppose $\dim_\Delta(f,\operatorname{Perf}(X))=1$, e.g.\ when $f$ is \'{e}tale (\Cref{ex:diagdim_etale}). It follows from \Cref{cor:separated_flat_diagonal_concrete_cases} that $\dim_{\oplus}(D_{\operatorname{qc}}(\mathcal{A}_X), D^b_{\operatorname{coh}}(\mathcal{A}_X)) \leq \dim_{\oplus}(D_{\operatorname{qc}}(\mathcal{A}), D^b_{\operatorname{coh}}(\mathcal{A}))$. 

        \item Suppose $ D_{\operatorname{qc}}(\mathcal{A}) =\overline{\langle P \rangle}_{n+1}$ for some perfect complex $P$ and  the integer $m:=\dim_{\Delta}(f,D^b_{\operatorname{coh}}(X))<\infty$.
        Then, again by \Cref{cor:separated_flat_diagonal_concrete_cases}, $\dim D^b_{\operatorname{coh}}(\mathcal{A}_X)\leq m(n+1) - 1<\infty$. A few examples where these assumptions are satisfied include varieties over any field, as well as more generally separated schemes of finite type  $Y$ over a separated Dedekind scheme $X$ where the irreducible components of $Y$ dominate $X$ (e.g.\ proper integral schemes over $\mathbb{Z}$ whose structure morphisms are surjective).
        \end{enumerate}
\end{example}

\section{Descending generation}
\label{sec:descent_conditions}

We prove our results concerning descent and strong generation.
Let us start with a few preliminary lemmas and propositions.

\begin{lemma}\label{lem:bound_above_to_bounded_above_if_resolution_property}
    Let $(X,\mathcal{A})$ be a Noether nc.\ scheme. If $f\colon Y \to X$ is a proper morphism, then $\mathbb{R}f_\ast \mathcal{O}_Y \otimes^{\mathbb{L}} D^{-}_{\operatorname{coh}}(\mathcal{A})$ is contained in $D^{-}_{\operatorname{coh}}(\mathcal{A})$.
\end{lemma}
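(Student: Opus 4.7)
The plan is to first reduce to a standard coherence fact for derived tensor products. Since $X$ is Noetherian and $f$ is proper, Grothendieck's finiteness theorem applied to $\mathcal{O}_Y$ yields $\mathbb{R}f_\ast \mathcal{O}_Y \in D^b_{\operatorname{coh}}(X)$. It therefore suffices to prove the more general claim: for any $K \in D^b_{\operatorname{coh}}(X)$ and $E \in D^{-}_{\operatorname{coh}}(\mathcal{A})$, the tensor $K \otimes^{\mathbb{L}}_{\mathcal{O}_X} E$ lies in $D^{-}_{\operatorname{coh}}(\mathcal{A})$. As boundedness above and coherence of cohomology are local conditions on $X$, I would pass to an affine open $\operatorname{Spec} R \subseteq X$ where $\mathcal{A}$ corresponds to a module-finite coherent $R$-algebra $A$. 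The $A$-module structure on the tensor product is inherited from the second factor, so all resolutions can be carried out on the $R$-side.

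For boundedness above, resolve $K|_{\operatorname{Spec} R}$ by a complex $P$ of finitely generated free $R$-modules concentrated in cohomological degrees $(-\infty, b]$ with $b = \sup\{i : H^i(K) \neq 0\}$; such a resolution exists by Noetherianness of $R$ combined with boundedness of the cohomology, and is automatically K-flat as a bounded-above complex of flats. Then $K \otimes^{\mathbb{L}}_R E \simeq P \otimes_R E$ sits in cohomological degrees $\leq b + N_0$, where $N_0 = \sup\{i : H^i(E) \neq 0\}$, since after truncating $E$ the total complex is assembled from $P^i \otimes E^j$ with $i \leq b$ and $j \leq N_0$.

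For coherence, I would induct on the cohomological length of $K$ via the truncation triangle $\tau^{\leq b-1}K \to K \to H^b(K)[-b]\to$, reducing to the case where $K$ is a single coherent sheaf concentrated in one degree, i.e.\ $K = M$ for a finitely generated $R$-module $M$. Then a finitely generated free resolution $F_\bullet \to M$ computes $M \otimes^{\mathbb{L}}_R E$, and the hyperhomology spectral sequence
\[
E_2^{p,q} = \operatorname{Tor}^R_{-p}(M, H^q(E)) \Longrightarrow H^{p+q}(M \otimes^{\mathbb{L}}_R E)
\]
has $E_2$-terms that are finitely generated over $R$: both $M$ and $H^q(E)$ are f.g.\ over the Noetherian ring $R$ (the latter because $A$ is module-finite over $R$ and $H^q(E)$ is coherent over $A$). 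For fixed $n$, only finitely many pairs $(p,q)$ with $p+q=n$, $p\leq 0$ and $q\leq N_0$ can contribute, so $H^n(M \otimes^{\mathbb{L}}_R E)$ is a finite iterated extension of f.g.\ $R$-modules, hence f.g.\ over $R$, and therefore f.g.\ over $A$ as $A$ is module-finite over $R$.

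The only real obstacle is the bookkeeping that ensures the $A$-module structure is preserved throughout the argument: one must resolve the $\mathcal{O}_X$-side rather than the $\mathcal{A}$-side, because projective $\mathcal{A}$-modules need not be $\mathcal{O}_X$-flat when $\mathcal{A}$ is not $\mathcal{O}_X$-flat (so the resulting complex would compute the wrong derived tensor). Resolving on the $R$-side sidesteps this, and the $A$-action on $P \otimes_R E$ simply passes through the second factor functorially, preserving both the statement and its proof.
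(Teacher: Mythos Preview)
Your proof is correct and follows essentially the same approach as the paper's own proof. The paper observes that $\mathbb{R}f_\ast \mathcal{O}_Y\in D^-_{\operatorname{coh}}(X)$ by properness and then asserts that the tensor action restricts to $D^-_{\operatorname{coh}}(X)\times D^-_{\operatorname{coh}}(\mathcal{A})\to D^-_{\operatorname{coh}}(\mathcal{A})$ ``as bounded above flat resolutions exist and coherence can be checked affine locally''; your argument is precisely the affine-local verification the paper leaves implicit, carried out via a bounded-above free resolution on the $\mathcal{O}_X$-side and a Tor spectral sequence for coherence.
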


\begin{proof}
    The tensor action \eqref{eq:tensor-action} restricts to a functor 
    \begin{displaymath}
       D^-_{\operatorname{coh}}(X)\times D^-_{\operatorname{coh}}(\mathcal{A})\to D^-_{\operatorname{coh}}(\mathcal{A})
    \end{displaymath}
    as bounded above flat resolutions exist and coherence can be checked affine locally.
    Hence, the result follows as $\mathbb{R}f_\ast \mathcal{O}_Y\in D^-_{\operatorname{coh}}(X)$ by properness of $f$ (see e.g.\ \cite[Corollary 1.4.12 and Theorem 3.2.1]{EGAIII1:1961}). 
\end{proof}

\begin{lemma}\label{lem:tensoring_perfect}
   Let $(X,\mathcal{A})$ be a Noether nc.\ scheme.
   If $P$ is a perfect complex over $X$, then the endofunctor 
   \[
   (P\otimes^{\mathbb{L}}_{\mathcal{O}_X} -) \colon D_{\operatorname{qc}}(\mathcal{A}) \to D_{\operatorname{qc}}(\mathcal{A})
   \]
   restricts to an endofunctor of $D^b_{\operatorname{coh}}(\mathcal{A})$.
\end{lemma}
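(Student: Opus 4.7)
The plan is to localize on $X$ and then exploit that, over an affine base, perfect complexes sit inside the thick closure of the structure sheaf in $D(U)$.

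First, I would reduce the statement to a question on each affine open of $X$. Membership in $D^b_{\operatorname{coh}}(\mathcal{A})$ can be checked on any affine open cover: coherence is Zariski-local, and boundedness of cohomology is detected on any finite affine cover of the (quasi-compact) Noetherian scheme $X$. Moreover, for an open immersion $j\colon U\hookrightarrow X$, flatness of $j$ gives a canonical isomorphism
\[
   j^\ast\!\bigl(P \otimes^{\mathbb{L}}_{\mathcal{O}_X} F\bigr) \;\cong\; (P|_U) \otimes^{\mathbb{L}}_{\mathcal{O}_U} (F|_U)
\]
(compatibility of the tensor action with strict flat base change, cf.\ \Cref{prop:nc_base_change}). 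Hence it suffices to show, for each affine open $U\subseteq X$, that $(P|_U)\otimes^{\mathbb{L}}_{\mathcal{O}_U}(-)$ preserves $D^b_{\operatorname{coh}}(\mathcal{A}|_U)$.

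Next, I fix such an affine open $U$ and consider the strictly full subcategory
\[
   \mathcal{C} \;:=\; \bigl\{\, Q\in D(U) \,\bigm|\, Q \otimes^{\mathbb{L}}_{\mathcal{O}_U} F \in D^b_{\operatorname{coh}}(\mathcal{A}|_U) \text{ for all } F\in D^b_{\operatorname{coh}}(\mathcal{A}|_U)\,\bigr\}.
\]
Because $D^b_{\operatorname{coh}}(\mathcal{A}|_U)$ is a thick triangulated subcategory of $D(\mathcal{A}|_U)$ and $Q\otimes^{\mathbb{L}}_{\mathcal{O}_U}(-)$ is an exact triangulated endofunctor, $\mathcal{C}$ is closed under shifts, cones, and direct summands; i.e.\ $\mathcal{C}$ is thick. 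Trivially $\mathcal{O}_U\in \mathcal{C}$, since tensoring with $\mathcal{O}_U$ is (canonically) the identity.

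Finally, because $U=\operatorname{Spec}(R)$ is affine, every perfect complex on $U$ is quasi-isomorphic to a bounded complex of finitely generated projective $R$-modules, each of which is a direct summand of a finite free module. Consequently $P|_U$ lies in $\langle \mathcal{O}_U\rangle \subseteq D(U)$, and thus $P|_U\in \mathcal{C}$, which finishes the proof. There is no real obstacle here; the only substantive points are the compatibility of the tensor action with open restriction and the concrete description of perfect complexes on an affine scheme as built from $\mathcal{O}_U$ inside $D(U)$.
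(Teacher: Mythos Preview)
Your proof is correct and follows essentially the same approach as the paper: the paper's proof is the one-line ``We can check the claim locally, in which case we can reduce to the case $P=\mathcal{O}_X$,'' and your argument is precisely a careful unpacking of that sentence (localize to affines, then use that $P|_U\in\langle\mathcal{O}_U\rangle$).
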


\begin{proof}
    We can check the claim locally, in which case we can reduce to the case $P=\mathcal{O}_X$.
\end{proof}

The following is an adaptation of \cite[Proposition 4.4]{Aoki:2021}. It shows that one can obtain generation statements for a triangulated category by studying descendable objects in a tensor triangulated category acting on it.

\begin{proposition}\label{prop:nc_aoki}
    Let $(X,\mathcal{A})$ be a Noether nc.\ scheme. 
    Suppose $f\colon Y \to X$ is an h  cover (so $Y$ is Noetherian) and that either $\mathcal{A}$ is flat or $f$ is additionally flat, i.e.\ an fppf  cover, then there exists an integer $n\geq 0$ such that 
    \begin{displaymath}
        D_{\operatorname{qc}}(\mathcal{A}) = \langle \mathbb{R}f_\ast \mathcal{O}_Y \otimes^{\mathbb{L}}_{\mathcal{O}_X} D_{\operatorname{qc}}(\mathcal{A}) \rangle_n.
    \end{displaymath}
    If $f$ is additionally proper, then one also has
    \begin{displaymath}
        D^{-}_{\operatorname{coh}}(\mathcal{A}) = \langle \mathbb{R}f_\ast \mathcal{O}_Y \otimes^{\mathbb{L}}_{\mathcal{O}_X} D^-_{\operatorname{coh}}(\mathcal{A}) \rangle_n.
    \end{displaymath}
\end{proposition}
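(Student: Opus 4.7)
The plan is to invoke the descendability of $T := \mathbb{R}f_\ast \mathcal{O}_Y$ in the tensor triangulated category $D_{\operatorname{qc}}(X)$---recalled in the introduction as the key feature of h covers---and then to transfer this quantitative statement from $D_{\operatorname{qc}}(X)$ to $D_{\operatorname{qc}}(\mathcal{A})$ via the tensor action \eqref{eq:tensor-action}. The flatness hypothesis plays the role of making the nc.\ projection formula \Cref{prop:strict_form_nc_projection_formula} available, so that the action of $T$ on $D(\mathcal{A})$ agrees with the comonad $\mathbb{R}\breve{f}_\ast \mathbb{L}\breve{f}^\ast$.

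First I would let $C$ denote the cofiber of the unit $\mathcal{O}_X \to T$ and extract from descendability an integer $n \geq 0$ with $C^{\otimes n} = 0$ in $D_{\operatorname{qc}}(X)$. Standard Amitsur-tower bookkeeping then exhibits $\mathcal{O}_X$ in $n$ cone-steps from shifts of the tensor powers $T^{\otimes k}$, $1 \leq k \leq n$. Given $E \in D_{\operatorname{qc}}(\mathcal{A})$, applying the exact functor $(-\otimes^{\mathbb{L}}_{\mathcal{O}_X} E)$ to this filtration and using associativity of the action yields a filtration of $E = \mathcal{O}_X \otimes^{\mathbb{L}}_{\mathcal{O}_X} E$ whose subquotients have the form $T \otimes^{\mathbb{L}}_{\mathcal{O}_X} (T^{\otimes(k-1)} \otimes^{\mathbb{L}}_{\mathcal{O}_X} E) \in T \otimes^{\mathbb{L}}_{\mathcal{O}_X} D_{\operatorname{qc}}(\mathcal{A})$, giving the first claim.

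For the second claim, properness of $f$ ensures $T \in D^{-}_{\operatorname{coh}}(X)$, so by iterating \Cref{lem:bound_above_to_bounded_above_if_resolution_property} each intermediate object $T^{\otimes(k-1)} \otimes^{\mathbb{L}}_{\mathcal{O}_X} E$ remains in $D^{-}_{\operatorname{coh}}(\mathcal{A})$ whenever $E$ does, and the same filtration witnesses $E \in \langle T \otimes^{\mathbb{L}}_{\mathcal{O}_X} D^{-}_{\operatorname{coh}}(\mathcal{A}) \rangle_n$. The main obstacle I anticipate is the quantitative bookkeeping---turning the tensor-nilpotence $C^{\otimes n} = 0$ into an explicit $n$-step cone construction of $E$ whose subquotients sit inside $T \otimes^{\mathbb{L}}_{\mathcal{O}_X} D_{\operatorname{qc}}(\mathcal{A})$ rather than merely in the thick tensor ideal generated by $T$. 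This is where the projection formula does real work, letting the Amitsur bookkeeping carried out in $D_{\operatorname{qc}}(X)$ be imported verbatim into $D_{\operatorname{qc}}(\mathcal{A})$.
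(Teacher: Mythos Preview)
Your approach is essentially the paper's: exploit descendability of $T=\mathbb{R}f_\ast\mathcal{O}_Y$ in $D_{\operatorname{qc}}(X)$ and transport the resulting filtration to $D_{\operatorname{qc}}(\mathcal{A})$ via the tensor action. One correction is needed, however: descendability does \emph{not} give $C^{\otimes n}=0$ as an object---it only gives that the comparison map $\phi\colon C\to\mathcal{O}_X$ (taking $C$ to be the fiber, as the paper does) satisfies $\phi^{\otimes n}=0$. The object $C^{\otimes n}$ is typically nonzero; e.g.\ for the fold map $\operatorname{Spec}(k)\sqcup\operatorname{Spec}(k)\to\operatorname{Spec}(k)$ the fiber is $k[-1]$, whose tensor powers never vanish even though $\phi=0$ already. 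The paper handles this by setting $K_j:=\operatorname{cone}(\phi^{\otimes j})$, applying the octahedral axiom to the factorization $C^{\otimes(i+1)}\to C^{\otimes i}\to\mathcal{O}_X$ to obtain triangles $T\otimes^{\mathbb{L}}C^{\otimes i}\to K_{i+1}\to K_i$, and then tensoring with $E$; once $\phi^{\otimes n}=0$ one has $K_n\cong C^{\otimes n}[1]\oplus\mathcal{O}_X$, so $E$ is a summand of $K_n\otimes^{\mathbb{L}} E\in\langle T\otimes^{\mathbb{L}} D_{\operatorname{qc}}(\mathcal{A})\rangle_n$. This is precisely the ``bookkeeping'' you anticipated, and the map/object distinction is what makes it nontrivial.

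A smaller point: the projection formula is not invoked in this proof at all---the argument uses only the action $T\otimes^{\mathbb{L}}_{\mathcal{O}_X}(-)$, never the identification with the comonad $\mathbb{R}\breve{f}_\ast\mathbb{L}\breve{f}^\ast$. That identification (and with it the flatness hypothesis) enters only downstream, in \Cref{thm:nc_local_to_global_h_cover_big} and \Cref{thm:nc_proper_descent}, so your diagnosis of where flatness is doing work is slightly misplaced.
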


\begin{proof}
    Consider the natural morphism $\mathcal{O}_X \to \mathbb{R}f_\ast \mathcal{O}_Y$ and let $C$ denote the object completing the distinguished triangle 
    \[
        C\xrightarrow{\phi} \mathcal{O}_X \to \mathbb{R}f_\ast \mathcal{O}_Y\to C[1].
    \]
    Additionally, let $K_j$ denote the cone of the naturally induced morphism $\phi^{\otimes j} \colon C^{\otimes j }\to \mathcal{O}_X$. 
    As $f$ is an h  cover, $\mathbb{R}f_\ast \mathcal{O}_Y$ is `descendable' in the sense of \cite[Definition 3.18]{Mathew:2016}, see \cite[Proposition 11.25]{Bhatt/Scholze:2017}. (\cite[Definition 3.16]{Balmer:2016} calls these objects nil-faithful.)
    Thus there exists an integer $n\geq 0$ for which $\phi^{\otimes n}=0$ by \cite[Lemma 11.2]{Bhatt/Scholze:2017}. 
   
    Denote $\mathcal{S}:= \{ \mathbb{R}f_\ast \mathcal{O}_Y \otimes^{\mathbb{L}}_{\mathcal{O}_X} E \mid E\in D_{\operatorname{qc}}(\mathcal{A})\}$. 
    We show by induction on $0 \leq i \leq n$  below that $K_i \otimes^{\mathbb{L}}_{\mathcal{O}_X} E\in \langle \mathcal{S}\rangle_i$ for each $E\in D_{\operatorname{qc}}(\mathcal{A})$.
    By our choice of $n$, $K_n$ is isomorphic to $C^{\otimes n}[1]\oplus \mathcal{O}_X$, and so after tensoring on the right with $E$ we see that $E$ is a direct summand of $K_n \otimes^{\mathbb{L}}_{\mathcal{O}_X} E$. 
    Thus $\langle \mathcal{S}\rangle_n = D_{\operatorname{qc}}(\mathcal{A})$.

    It rests to show the induction.
    There is nothing to show for the case $i=0$ ($K_0:=\operatorname{cone}(\operatorname{id})=0)$.
    For the induction step, consider the distinguished triangle obtained from applying the octahedral axiom to the factorization $C^{\otimes i+1}\to C^{\otimes i}\to \mathcal{O}_X$:
    \begin{displaymath}
        \mathbb{R}f_\ast \mathcal{O}_Y \otimes^{\mathbb{L}}_{\mathcal{O}_X} C^{\otimes i} \to K_{i+1}\to K_i \to (\mathbb{R}f_\ast \mathcal{O}_Y\otimes^{\mathbb{L}}_{\mathcal{O}_X} C^{\otimes i}) [1].
    \end{displaymath}
    Tensoring on the right with $E$ then yields the distinguished triangle
    \[
        \mathbb{R}f_\ast \mathcal{O}_Y \otimes^{\mathbb{L}}_{\mathcal{O}_X}  (C^{\otimes i} \otimes^{\mathbb{L}}_{\mathcal{O}_X} E) \to K_{i+1} \otimes^{\mathbb{L}}_{\mathcal{O}_X} E \to K_i \otimes^{\mathbb{L}}_{\mathcal{O}_X} E \to \big( \mathbb{R}f_\ast \mathcal{O}_Y \otimes^{\mathbb{L}}_{\mathcal{O}_X} (C^{\otimes i} \otimes^{\mathbb{L}}_{\mathcal{O}_X} E) \big)[1],
    \]
    from which the induction follows immediately.
    
    The second claim is proven by repeating the argument with $\mathcal{S}= \{ \mathbb{R}f_\ast r\mathcal{O}_Y \otimes^{\mathbb{L}}_{\mathcal{O}_X} E \mid E\in D^-_{\operatorname{coh}}(\mathcal{A}) \}$, and using \Cref{lem:bound_above_to_bounded_above_if_resolution_property}. 
\end{proof} 

The following extends \cite[Theorem 3.5]{Aoki:2021} to nc.\ schemes.

\begin{proposition}\label{prop:nc_coherent_boundedness}
    Let $(X,\mathcal{A})$ be a separated Noether nc.\ scheme and let $f\colon Y \to X$ be a morphism of finite type between separated Noetherian schemes. 
    For any $E\in D^b_{\operatorname{coh}}(\mathcal{A}_Y)$ there exists an integer $n\geq 0$ and an $F\in D^b_{\operatorname{coh}}(\mathcal{A})$ such that $\mathbb{R}\Breve{f}_\ast E\in \overline{\langle F \rangle}_n$, i.e.\ the pushforward is coherently bounded.
\end{proposition}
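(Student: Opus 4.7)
The plan is to adapt the strategy of \cite[Theorem 3.5]{Aoki:2021} to the noncommutative setting; the three main ingredients being Nagata compactification, extensions of coherent $\mathcal{A}$-modules across open immersions, and the structure of local cohomology.

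First, by Nagata compactification (see \cite[\href{https://stacks.math.columbia.edu/tag/0F41}{Tag 0F41}]{stacks-project}), one factors $f$ as $Y\xrightarrow{j}\bar Y\xrightarrow{p}X$ with $j$ a quasi-compact open immersion into a separated Noetherian scheme and $p$ proper; this yields $\mathbb{R}\Breve{f}_\ast\cong \mathbb{R}\Breve{p}_\ast\circ \mathbb{R}\Breve{j}_\ast$. The proper pushforward $\mathbb{R}\Breve{p}_\ast$ restricts to a functor $D^b_{\operatorname{coh}}(\mathcal{A}_{\bar Y})\to D^b_{\operatorname{coh}}(\mathcal{A})$---this follows by comparing, via the forgetful structure morphism $\pi_\ast$, to the analogous statement on the underlying schemes (using that $\mathcal{A}_{\bar Y}$ is coherent over $\mathcal{O}_{\bar Y}$) together with \Cref{lem:nc_bounded_above}---and moreover commutes with small coproducts on $D_{\operatorname{qc}}(\mathcal{A}_{\bar Y})$ since proper morphisms between Noetherian schemes have bounded cohomological dimension. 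Hence the claim propagates through $\mathbb{R}\Breve{p}_\ast$, and it suffices to prove it when $f=j$ is an open immersion.

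So let $j\colon Y\hookrightarrow \bar Y$ be a quasi-compact open immersion. First extend $E$ to a bounded coherent complex $\tilde E\in D^b_{\operatorname{coh}}(\mathcal{A}_{\bar Y})$ with $\Breve{j}^\ast\tilde E\simeq E$. Such extensions exist by standard Noetherian arguments: extend each of the finitely many nonzero cohomology sheaves of $E$ (coherently extend the underlying $\mathcal{O}_Y$-module, and then pass to the $\mathcal{A}_{\bar Y}$-submodule of $\Breve{j}_\ast$ of that sheaf generated by the extension---this stays coherent since $\mathcal{A}_{\bar Y}$ is coherent over $\mathcal{O}_{\bar Y}$) and then assemble the resulting complex via Postnikov towers. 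The counit of the adjunction $(\Breve{j}^\ast,\mathbb{R}\Breve{j}_\ast)$ fits into the distinguished triangle
\[
\mathbb{R}\underline\Gamma_Z(\tilde E)\to \tilde E\to \mathbb{R}\Breve{j}_\ast E\xrightarrow{+1},
\]
with $Z=\bar Y\setminus Y$. It therefore remains to show that $\mathbb{R}\underline\Gamma_Z(\tilde E)$ lies in $\overline{\langle F_1\rangle}_c$ for some $F_1\in D^b_{\operatorname{coh}}(\mathcal{A}_{\bar Y})$ and $c\geq 0$.

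This last step is the main obstacle. Mimicking \cite{Aoki:2021}, one writes $\mathbb{R}\underline\Gamma_Z(\tilde E)\simeq \operatorname{hocolim}_n K_n$ as the homotopy colimit of a sequence of bounded coherent objects $K_n\in D^b_{\operatorname{coh}}(\mathcal{A}_{\bar Y})$ (obtained from $\tilde E$ by cutting along the infinitesimal neighbourhoods $V(\mathcal{I}^n)$ of $Z$, using the tensor action \eqref{eq:tensor-action}), and then exhibits a uniform bound $K_n\in \overline{\langle F_1\rangle}_{c-1}$ for a common $F_1$ independent of $n$. Uniformity is ultimately a consequence of Artin--Rees-type finiteness on the Noetherian $\bar Y$. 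Once this is in place, the standard presentation of a countable homotopy colimit as the cofiber of a map between two countable coproducts places $\mathbb{R}\underline\Gamma_Z(\tilde E)$ in $\overline{\langle F_1\rangle}_c$, completing the proof.
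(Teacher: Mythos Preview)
Your overall structure---Nagata compactification to reduce to an open immersion, then extend $E$ to $\tilde E$---matches the paper. The difference lies in how you handle the open immersion case.

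You attack $\mathbb{R}\Breve{j}_\ast E$ via the local cohomology triangle and then try to bound $\mathbb{R}\underline\Gamma_Z(\tilde E)$ by redoing Aoki's hocolim/Artin--Rees argument in the noncommutative setting. This is the step you flag as the ``main obstacle'', and indeed you leave it as a sketch.

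The paper sidesteps this obstacle entirely. Since $j$ is flat, the projection formula (\Cref{prop:strict_form_nc_projection_formula}) gives
\[
\mathbb{R}\Breve{j}_\ast E \;\cong\; \mathbb{R}\Breve{j}_\ast \mathbb{L}\Breve{j}^\ast E' \;\cong\; \mathbb{R}j_\ast\mathcal{O}_U \otimes^{\mathbb{L}}_{\mathcal{O}_X} E',
\]
where $E'\in D^b_{\operatorname{coh}}(\mathcal{A})$ is an extension of $E$ (obtained via the Verdier localization $D^b_{\operatorname{coh}}(\mathcal{A})\to D^b_{\operatorname{coh}}(\mathcal{A}|_U)$, rather than by hand). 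Now the purely \emph{commutative} result of Neeman \cite[Theorem~6.2]{Neeman:2021b} says $\mathbb{R}j_\ast\mathcal{O}_U\in\overline{\langle G\rangle}_n$ for a classical generator $G$ of $\operatorname{Perf}(X)$. Tensoring with $E'$ gives $\mathbb{R}\Breve{j}_\ast E\in\overline{\langle G\otimes^{\mathbb{L}}_{\mathcal{O}_X}E'\rangle}_n$, and $F:=G\otimes^{\mathbb{L}}_{\mathcal{O}_X}E'\in D^b_{\operatorname{coh}}(\mathcal{A})$ by \Cref{lem:tensoring_perfect}.

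In short: rather than reproving Aoki's bound inside $D_{\operatorname{qc}}(\mathcal{A})$, the paper leverages the tensor action to import the already-established commutative bound on $\mathbb{R}j_\ast\mathcal{O}_U$ and transport it to the noncommutative side in one line. Your approach would ultimately work---the $K_n$ you describe are of the form $C_n\otimes^{\mathbb{L}}\tilde E$ for commutative $C_n$, so the uniform bound again reduces to the commutative statement---but you are effectively re-deriving what you could simply cite.
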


\begin{proof}
    Suppose first $f$ is an open immersion $j\colon U \to X$.
    Let $G$ be a classical generator for $\operatorname{Perf}(X)$. 
    As the restriction $D^b_{\operatorname{coh}}(\mathcal{A}) \to D^b_{\operatorname{coh}}(\mathcal{A}|_U)$ is a Verdier localization, see \cite[Theorem 4.4]{Elagin/Lunts/Schnurer:2020}, we can find an $E^\prime\in D^b_{\operatorname{coh}}(\mathcal{A})$ with $\Breve{j}^\ast E^\prime=E$. 
    Moreover, by \cite[Theorem 6.2]{Neeman:2021b}, there exists an integer $n$ such that $\mathbb{R}j_\ast \mathcal{O}_U\in \overline{\langle G \rangle}_n$ (in $D_{\operatorname{qc}}(X)$).
    Consequently, $\mathbb{R}j_\ast \mathcal{O}_U \otimes^{\mathbb{L}}_{\mathcal{O}_X} E^\prime$ belongs to $\overline{\langle G \otimes^{\mathbb{L}}_{\mathcal{O}_X} E^\prime\rangle}_n$ (in $D_{\operatorname{qc}}(\mathcal{A})$). 
    However, $\mathbb{R}j_\ast \mathcal{O}_U \otimes^{\mathbb{L}}_{\mathcal{O}_X} E^\prime = \mathbb{R}\Breve{j}_\ast \mathbb{L}\Breve{j}^\ast E^\prime$ by \Cref{prop:strict_form_nc_projection_formula} (as open immersions are flat), and so $\mathbb{R}\Breve{j}_\ast E\in\overline{\langle G \otimes^{\mathbb{L}}_{\mathcal{O}_X} E^\prime\rangle}_n$ and $F:=G \otimes^{\mathbb{L}}_{\mathcal{O}_X} E^\prime\in D^b_{\operatorname{coh}}(\mathcal{A})$ by \Cref{lem:tensoring_perfect}.

    The general case is analogous to \cite[Theorem 3.5]{Aoki:2021}, but we spell it out for convenience of the reader. 
    By Nagata’s compactification theorem, any such morphism $f$ can be factored as an open immersion followed by a proper morphism , see \cite[\href{https://stacks.math.columbia.edu/tag/0F41}{Tag 0F41}]{stacks-project}.
    Hence, the result follows from the first paragraph and the fact that the derived nc.\ pushforward along a proper morphism preserves complexes with bounded and coherent cohomology.
\end{proof}

Our first result is a noncommutative generalization of \cite[Theorem D]{Lank:2024}.
 
\begin{theorem}\label{thm:nc_local_to_global_affine_cover}
    Let $(X,\mathcal{A})$ be a separated Noether nc.\ scheme.
    The following are equivalent
    \begin{enumerate}
        \item $D^b_{\operatorname{coh}}(\mathcal{A})$ admits a strong generator,
        \item $D^b_{\operatorname{coh}}(\mathcal{A}|_U)$ admits strong generator for each affine open $U\subseteq X$.
    \end{enumerate}
\end{theorem}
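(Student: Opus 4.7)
The plan is to handle the two implications separately. For $(1) \Rightarrow (2)$ I would rely on the fact that, for any open immersion $j \colon U \hookrightarrow X$, the restriction functor $D^b_{\operatorname{coh}}(\mathcal{A}) \to D^b_{\operatorname{coh}}(\mathcal{A}|_U)$ is a Verdier quotient (as already invoked in the proof of \Cref{prop:nc_coherent_boundedness} via \cite[Theorem 4.4]{Elagin/Lunts/Schnurer:2020}); Verdier quotients send strong generators to strong generators, yielding this direction.

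For $(2) \Rightarrow (1)$ I would argue by induction on the minimal size $n$ of a finite affine open cover of $X$, which exists as $X$ is quasi-compact. The base case $n = 1$ is the hypothesis. For the inductive step, decompose $X = U \cup V$ with $V$ affine and $U$ admitting an affine open cover of size $n - 1$; since $X$ is separated, $U \cap V$ also admits an affine open cover of size at most $n - 1$. Hypothesis $(2)$ transfers to $U$ and $U \cap V$ (every affine open there is an affine open of $X$), so by induction one obtains strong generators $G_U$, $G_V$ and $G_{U \cap V}$ for the three relevant bounded coherent derived categories.

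The core step is to combine these into a strong generator on $X$. Given $E \in D^b_{\operatorname{coh}}(\mathcal{A})$, I would exploit the Mayer--Vietoris triangle
\[
    E \to \mathbb{R}j_{U,\ast} j_U^\ast E \oplus \mathbb{R}j_{V,\ast} j_V^\ast E \to \mathbb{R}j_{U \cap V,\ast} j_{U \cap V}^\ast E \to E[1].
\]
Each restriction $j_?^\ast E$ lies in $\langle G_? \rangle_{m+1}$ for a uniform $m$, and hence its pushforward lies in $\langle \mathbb{R}(j_?)_\ast G_? \rangle_{m+1}$ inside $D_{\operatorname{qc}}(\mathcal{A})$. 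By \Cref{prop:nc_coherent_boundedness} each $\mathbb{R}(j_?)_\ast G_?$ sits in $\overline{\langle F_? \rangle}_p$ for some $F_? \in D^b_{\operatorname{coh}}(\mathcal{A})$, with $p$ independent of $E$. Setting $F := F_U \oplus F_V \oplus F_{U \cap V}$, this produces a uniform $N$ with $E \in \overline{\langle F \rangle}_N$ in $D_{\operatorname{qc}}(\mathcal{A})$.

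The main obstacle will be converting this $\oplus$-generation statement inside $D_{\operatorname{qc}}(\mathcal{A})$ into honest strong generation for $D^b_{\operatorname{coh}}(\mathcal{A})$, since the latter permits only finite coproducts. Following the strategy behind the commutative analogue \cite[Theorem D]{Lank:2024}, I expect an approximation argument (exploiting Noetherianness and boundedness of the cohomology sheaves of $E$) to let one replace the arbitrary coproducts appearing in building $E$ out of $F$ by finite ones, yielding $E \in \langle F \rangle_N$ and exhibiting $F$ as the sought strong generator of $D^b_{\operatorname{coh}}(\mathcal{A})$.
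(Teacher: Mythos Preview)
Your strategy is essentially the same as the paper's (which simply defers to the proof of \cite[Theorem D]{Lank:2024}, substituting in \Cref{prop:strict_form_nc_projection_formula}, \Cref{prop:nc_coherent_boundedness} and \cite[Corollary 4.4]{DeDeyn/Lank/ManaliRahul:2024}): induction over an affine open cover, using coherent boundedness of pushforwards along open immersions to manufacture a candidate $F\in D^b_{\operatorname{coh}}(\mathcal{A})$ that strongly $\oplus$-generates $D_{\operatorname{qc}}(\mathcal{A})$. Your use of the Mayer--Vietoris triangle is a perfectly valid way to run the induction; the paper's explicit mention of the projection formula (\Cref{prop:strict_form_nc_projection_formula}) reflects that \cite{Lank:2024} packages the same information slightly differently, writing $\mathbb{R}\breve{j}_\ast \breve{j}^\ast E \cong \mathbb{R}j_\ast\mathcal{O}_U \otimes^{\mathbb{L}}_{\mathcal{O}_X} E$ directly, but the content is the same.

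The one genuine gap is your final paragraph. You correctly isolate the obstacle---passing from $E\in\overline{\langle F\rangle}_N$ in $D_{\operatorname{qc}}(\mathcal{A})$ to $E\in\langle F\rangle_{N'}$ in $D^b_{\operatorname{coh}}(\mathcal{A})$---but ``I expect an approximation argument'' is not a proof, and the step is not elementary. In particular, there is no reason to expect the same bound $N$ to survive; what one actually needs is the general implication that a strong $\oplus$-generator for $D_{\operatorname{qc}}(\mathcal{A})$ lying in $D^b_{\operatorname{coh}}(\mathcal{A})$ forces $D^b_{\operatorname{coh}}(\mathcal{A})$ to be strongly generated. This is precisely the content of \cite[Corollary 4.4]{DeDeyn/Lank/ManaliRahul:2024}, which rests on the approximability of $D_{\operatorname{qc}}(\mathcal{A})$ established in that paper. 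Without invoking such a result (or reproving it), your argument only establishes \Cref{cor:nc_local_to_global_affine_cover_big}, not the theorem itself; \Cref{sec:big_vs_small_elephant} of the present paper discusses exactly why this passage is subtle and can fail outside the Noetherian setting.
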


\begin{proof}
    This follows verbatim to the proof of \cite[Theorem D]{Lank:2024} when replacing the necessary arguments with \Cref{prop:strict_form_nc_projection_formula}, \Cref{prop:nc_coherent_boundedness} (only needed for open immersions), and \cite[Corollary 4.4]{DeDeyn/Lank/ManaliRahul:2024}. 
\end{proof}

\begin{corollary}\label{cor:nc_local_to_global_affine_cover_big}
    Let $(X,\mathcal{A})$ be a separated Noether nc.\ scheme.
    The following are equivalent:
    \begin{enumerate}
        \item \label{rmk:nc_local_to_global_affine_cover_big_cond_1} $D_{\operatorname{qc}}(\mathcal{A})$ admits a strong $\oplus$-generator with bounded and coherent cohomology,
        \item \label{rmk:nc_local_to_global_affine_cover_big_cond_2} $D_{\operatorname{qc}}(\mathcal{A}|_U)$ admits a strong $\oplus$-generator with bounded and coherent cohomology for each affine open $U\subseteq X$. 
    \end{enumerate}
\end{corollary}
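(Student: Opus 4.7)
The plan is to derive the nontrivial direction (2) $\Rightarrow$ (1) by combining \Cref{prop:nc_aoki} applied to the Zariski cover (viewed as a finite fppf cover) with \Cref{prop:nc_coherent_boundedness} and the nc projection formula \Cref{prop:strict_form_nc_projection_formula}. The converse is a short pullback argument which I would handle first.

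For (1) $\Rightarrow$ (2), separatedness makes any affine open immersion $j\colon U\hookrightarrow X$ quasi-compact, so $\mathbb{L}\breve{j}^\ast\colon D_{\operatorname{qc}}(\mathcal{A})\to D_{\operatorname{qc}}(\mathcal{A}|_U)$ preserves small coproducts, triangles, summands and bounded coherent cohomology, and is essentially surjective since $\mathbb{L}\breve{j}^\ast\mathbb{R}\breve{j}_\ast\simeq \operatorname{id}$. Consequently a bounded coherent strong $\oplus$-generator $G$ of $D_{\operatorname{qc}}(\mathcal{A})$ pulls back to one on $D_{\operatorname{qc}}(\mathcal{A}|_U)$ with the same thickness.

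For (2) $\Rightarrow$ (1), I would fix a finite affine open cover $\{U_i\}_{i=1}^{m}$ of $X$ and consider $f\colon Y:=\sqcup_i U_i \to X$. This $f$ is flat, of finite type, separated, and universally submersive, hence a finite h cover which (by flatness) is an fppf cover; moreover $Y$ is Noetherian. Using $D_{\operatorname{qc}}(\mathcal{A}_Y) \simeq \prod_i D_{\operatorname{qc}}(\mathcal{A}|_{U_i})$, the local generators furnished by (2) assemble into a single $G_Y \in D^b_{\operatorname{coh}}(\mathcal{A}_Y)$ with $D_{\operatorname{qc}}(\mathcal{A}_Y) = \overline{\langle G_Y \rangle}_{N+1}$ for some $N\geq 0$. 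Then \Cref{prop:nc_coherent_boundedness} supplies an $F \in D^b_{\operatorname{coh}}(\mathcal{A})$ and $k\geq 0$ with $\mathbb{R}\breve{f}_\ast G_Y \in \overline{\langle F \rangle}_k$, and the nc projection formula (valid since $f$ is flat and qcqs) gives, for every $E\in D_{\operatorname{qc}}(\mathcal{A})$,
\[
\mathbb{R}f_\ast \mathcal{O}_Y \otimes^{\mathbb{L}}_{\mathcal{O}_X} E \;\cong\; \mathbb{R}\breve{f}_\ast\mathbb{L}\breve{f}^\ast E \;\in\; \overline{\langle \mathbb{R}\breve{f}_\ast G_Y \rangle}_{N+1} \;\subseteq\; \overline{\langle F \rangle}_{k(N+1)}.
\]
Combining with \Cref{prop:nc_aoki}, which yields $r\geq 0$ such that $D_{\operatorname{qc}}(\mathcal{A}) = \langle \mathbb{R}f_\ast \mathcal{O}_Y \otimes^{\mathbb{L}}_{\mathcal{O}_X} D_{\operatorname{qc}}(\mathcal{A}) \rangle_r$, one concludes $D_{\operatorname{qc}}(\mathcal{A}) \subseteq \overline{\langle F \rangle}_{rk(N+1)}$, so $F$ is the desired bounded coherent strong $\oplus$-generator.

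The step I expect to require the most care is the thickness bookkeeping at the end, namely the standard absorption $\langle \overline{\langle F \rangle}_{k(N+1)} \rangle_r \subseteq \overline{\langle F \rangle}_{rk(N+1)}$. This is routine once one observes that $\overline{\langle F \rangle}_{k(N+1)}$ is already closed under shifts, direct summands and small coproducts, so the outer $\langle-\rangle_r$ contributes only cone layers whose count adds multiplicatively, but writing it cleanly deserves attention to ensure no off-by-one creeps in.
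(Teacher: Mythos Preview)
Your argument is correct, but it takes a different route than the paper's. The paper deduces the corollary as a byproduct of the \emph{proof} of \Cref{thm:nc_local_to_global_affine_cover}, which follows \cite[Theorem~D]{Lank:2024}: an induction on the number of opens in a finite affine cover, gluing strong $\oplus$-generators via the projection formula \Cref{prop:strict_form_nc_projection_formula}, \Cref{prop:nc_coherent_boundedness} (only for open immersions), and \cite[Corollary~4.4]{DeDeyn/Lank/ManaliRahul:2024}. You instead invoke \Cref{prop:nc_aoki}, whose key input is the descendability of $\mathbb{R}f_\ast\mathcal{O}_Y$ for h covers from \cite{Bhatt/Scholze:2017}. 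In effect, your proof is the argument for \Cref{thm:nc_local_to_global_h_cover_big} specialised to a Zariski cover; note that in the paper's logical order the latter actually \emph{uses} the present corollary to refine to affine pieces, so you have merged two steps into one. The paper's route is more elementary and self-contained; yours has the virtue of showing that the Zariski case is just an instance of the same descendability principle governing the fppf and h cases.
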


\begin{proof}
    This follows from the proof of \Cref{thm:nc_local_to_global_affine_cover} (and really that of \cite[Theorem D]{Lank:2024}) which proceeds by constructing strong $\oplus$-generators with bounded and coherent cohomology on the `big' category.
\end{proof}

\begin{example}\label{ex:els_global}
    Let $X$ be a separated scheme of finite type over a perfect field and let $\mathcal{A}$ be a coherent $\mathcal{O}_X$-algebra. 
    Observe that the center\footnote{If one wants, one can view this as a scheme by considering $\underline{\operatorname{Spec}}_X(\operatorname{Z}(\mathcal{A}))$ and we can view $\mathcal{A}$ as living over it.} of $\mathcal{A}$ is also of finite type over our perfect base field. It follows that $D^b_{\operatorname{coh}}(\mathcal{A}|_U)$ admits a strong generator for each affine open $U$ of $X$ by \cite[Remark 2.6]{Elagin/Lunts/Schnurer:2020}. 
    Therefore, by \Cref{thm:nc_local_to_global_affine_cover} there exists a strong generator for $D^b_{\operatorname{coh}}(\mathcal{A})$.
\end{example}

For more exotic topologies, the following theorem allows one to at least check strong $\oplus$-generation locally.

\begin{theorem}\label{thm:nc_local_to_global_h_cover_big}
    Let $(X,\mathcal{A})$ be a separated Noether nc.\ scheme. The following are equivalent
    \begin{enumerate}
        \item\label{item:hcover1} $D_{\operatorname{qc}}(\mathcal{A})$ admits a strong $\oplus$-generator with bounded and coherent cohomology,
        \item\label{item:hcover2} there exists an fppf covering $\{ f_i \colon X_i \to X \}_{i}$ with, for each $i$, $D_{\operatorname{qc}}(\mathcal{A}_{X_i})$ admitting a strong $\oplus$-generator with bounded and coherent cohomology.
    \end{enumerate}
    Additionally, if $\mathcal{A}$ is flat over $X$, then the above are also equivalent to
    \begin{enumerate}[resume]
        \item\label{item:hcover3} the same as \eqref{item:hcover2} but for $\{ f_i \colon X_i \to X \}_{i}$ a (finite) h covering.
    \end{enumerate}
\end{theorem}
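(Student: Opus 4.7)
The plan is to establish the trivial implications $(1)\Rightarrow(2)$ and, under the flatness hypothesis, $(1)\Rightarrow(3)$ via the identity cover, and then to obtain the content, namely $(2)\Rightarrow(1)$ and $(3)\Rightarrow(1)$, by combining the descent statement \Cref{prop:nc_aoki}, the projection formula \Cref{prop:strict_form_nc_projection_formula}, and the coherent boundedness of pushforwards \Cref{prop:nc_coherent_boundedness}. Since $\{\operatorname{id}_X \colon X \to X\}$ is an fppf covering (and an h covering whenever $\mathcal{A}$ is flat), the trivial directions are immediate.

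For the substantive direction $(2)\Rightarrow(1)$, I would first invoke quasi-compactness of $X$ (combined with the openness of fppf morphisms) to refine to a finite subcover, and assemble it into a single fppf cover $f \colon Y := \sqcup_{i=1}^r X_i \to X$; note that $Y$ is separated Noetherian. A strong $\oplus$-generator for $D_{\operatorname{qc}}(\mathcal{A}_Y)$ with bounded coherent cohomology is obtained as the direct sum $G_Y := \oplus_i G_i$ of the generators on each $X_i$, so $D_{\operatorname{qc}}(\mathcal{A}_Y) = \overline{\langle G_Y \rangle}_{m+1}$ for some $m$. Now \Cref{prop:nc_aoki} gives an integer $n \geq 0$ with $D_{\operatorname{qc}}(\mathcal{A}) = \langle \mathbb{R}f_\ast \mathcal{O}_Y \otimes^{\mathbb{L}}_{\mathcal{O}_X} D_{\operatorname{qc}}(\mathcal{A}) \rangle_n$, and for each $E \in D_{\operatorname{qc}}(\mathcal{A})$ the projection formula \Cref{prop:strict_form_nc_projection_formula} rewrites $\mathbb{R}f_\ast \mathcal{O}_Y \otimes^{\mathbb{L}}_{\mathcal{O}_X} E \cong \mathbb{R}\breve{f}_\ast \mathbb{L}\breve{f}^\ast E$. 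Since $\mathbb{L}\breve{f}^\ast E$ lies in $\overline{\langle G_Y \rangle}_{m+1}$, applying $\mathbb{R}\breve{f}_\ast$ places $\mathbb{R}\breve{f}_\ast \mathbb{L}\breve{f}^\ast E$ in $\overline{\langle \mathbb{R}\breve{f}_\ast G_Y \rangle}_{m+1}$. Finally \Cref{prop:nc_coherent_boundedness}, applied to $G_Y \in D^b_{\operatorname{coh}}(\mathcal{A}_Y)$ along the finite type morphism $f$, furnishes some $F \in D^b_{\operatorname{coh}}(\mathcal{A})$ and $k \geq 0$ with $\mathbb{R}\breve{f}_\ast G_Y \in \overline{\langle F \rangle}_k$. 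Multiplying out the thickening exponents yields $D_{\operatorname{qc}}(\mathcal{A}) \subseteq \overline{\langle F \rangle}_{n k (m+1)}$, exhibiting $F$ as the required strong $\oplus$-generator with bounded coherent cohomology.

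The argument for $(3)\Rightarrow(1)$ is identical except that now $f$ is only guaranteed to be an h cover, and the flatness of $\mathcal{A}$ (rather than of $f$) activates the flatness hypothesis in both \Cref{prop:nc_aoki} and \Cref{prop:strict_form_nc_projection_formula}. The main technical obstacle is not any one of the individual steps but rather ensuring that \Cref{prop:nc_aoki}, which a priori only delivers a generation statement in terms of the \emph{entire} family $\mathbb{R}f_\ast \mathcal{O}_Y \otimes^{\mathbb{L}}_{\mathcal{O}_X} D_{\operatorname{qc}}(\mathcal{A})$, can be collapsed to a \emph{single} object of $D^b_{\operatorname{coh}}(\mathcal{A})$; this is precisely the role played by the coherent boundedness statement \Cref{prop:nc_coherent_boundedness}, without which one would merely descend generation of a non-finite family rather than the existence of an individual generator.
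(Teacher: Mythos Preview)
Your approach is essentially that of the paper---descent via \Cref{prop:nc_aoki}, the projection formula \Cref{prop:strict_form_nc_projection_formula}, and coherent boundedness \Cref{prop:nc_coherent_boundedness}---and the core argument is correct. There is, however, a gap in the reduction step. Your claim that $Y = \sqcup_{i=1}^r X_i$ is ``separated Noetherian'' is not justified by merely passing to a finite subcover: in an fppf covering the $X_i$ are only flat and locally of finite presentation over $X$, hence locally Noetherian but in general neither quasi-compact nor separated; in an h covering the $X_i$ are Noetherian (being of finite type over $X$) but still need not be separated. Both \Cref{prop:nc_aoki} (stated for h covers, i.e.\ universally submersive morphisms of \emph{finite type}) and \Cref{prop:nc_coherent_boundedness} (which requires the source to be a separated Noetherian scheme) therefore need not apply to your $f$.

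The paper closes this gap by first refining so that each $X_i$ is Noetherian and then invoking \Cref{cor:nc_local_to_global_affine_cover_big}---Zariski descent for the existence of a bounded coherent strong $\oplus$-generator---to further refine so that each $X_i$ is affine, hence separated. This extra invocation is not cosmetic: without it there is no a priori reason the hypothesis on $D_{\operatorname{qc}}(\mathcal{A}_{X_i})$ passes to affine opens of $X_i$, so one cannot simply replace the $X_i$ by affines for free. Once this refinement is made, your argument goes through verbatim (and is exactly what the paper does).
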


\begin{proof} 
    Clearly $\eqref{item:hcover1}$ implies both $\eqref{item:hcover2}$ and $\eqref{item:hcover3}$ by simply taking the identity as a cover.
    So we show that both $\eqref{item:hcover2}$ and $\eqref{item:hcover3}$ imply $\eqref{item:hcover1}$. 
    For this observe that in \eqref{item:hcover2} we may assume $I$ finite and the $X_i$ Noetherian.
    (The $X_i$ are clearly locally Noetherian, so refining and taking an appropriate finite subcovering does the trick.)
    Thus, as fppf coverings are h coverings, it suffices to prove the claim for an h covering  $\{ f_i \colon X_i \to X \}^n_{i=1}$ (where the $X_i$ are automatically Noetherian), and assume that the necessary flatness assumptions are satisfied in order to apply \Cref{prop:strict_form_nc_projection_formula,prop:nc_aoki} below.
    Moreover, by \Cref{cor:nc_local_to_global_affine_cover_big} we can refine the covering and assume that the $X_i$ are affine.

    Let $Y:=\sqcup_i X_i$, then we have an $h$ cover $f\colon Y\to X$ induced from the h covering $\{ f_i \colon X_i \to X \}^n_{i=1}$.
    Moreover, being a finite disjoint union of Noetherian affine schemes, $Y$ is itself Noetherian affine (and hence separated).
    Our hypothesis implies that $D_{\operatorname{qc}}(f^\ast \mathcal{A})$ admits a strong generator with bounded and coherent cohomology (as $D_{\operatorname{qc}}(f^\ast \mathcal{A})=\Pi_i D_{\operatorname{qc}}(f_i^\ast \mathcal{A})$).    
     Choose an integer $N_1\geq 0$ and a $G\in D^b_{\operatorname{coh}}(f^\ast \mathcal{A})$ for which $D_{\operatorname{qc}}(f^\ast \mathcal{A}) = \overline{\langle G \rangle}_{N_1}$. 
     As $f$ is an h cover (and the necessary flatness assumptions hold) there exists an integer $N_2\geq 0$ with $ D_{\operatorname{qc}}(\mathcal{A}) = \overline{\langle \mathbb{R}f_\ast \mathcal{O}_Y \otimes^{\mathbb{L}}_{\mathcal{O}_X} D_{\operatorname{qc}}(\mathcal{A}) \rangle}_{N_2}$ by \Cref{prop:nc_aoki}. 
     However, by \Cref{prop:strict_form_nc_projection_formula} (and the flatness assumptions) $\mathbb{R} f_\ast \mathcal{O}_Y \otimes^{\mathbb{L}} D_{\operatorname{qc}}(\mathcal{A})$ is contained within $\mathbb{R}\Breve{f}_\ast D_{\operatorname{qc}}(f^\ast \mathcal{A})$.
     It follows that $D_{\operatorname{qc}}(\mathcal{A})=\overline{\langle\mathbb{R}\Breve{f}_\ast D_{\operatorname{qc}}(f^\ast \mathcal{A}) \rangle}_{N_2}$, and so, $D_{\operatorname{qc}}(\mathcal{A}) = \overline{\langle \mathbb{R} \Breve{f}_\ast G \rangle}_{N_1 N_2}$. 
     Lastly, it follows from \Cref{prop:nc_coherent_boundedness} that we can find a $G^\prime\in D^b_{\operatorname{coh}}(\mathcal{A})$ and an integer $N_3\geq 0$ such that $\mathbb{R}\Breve{f}_\ast G\in \overline{\langle G^\prime \rangle}_{N_3}$.
     Therefore, $D_{\operatorname{qc}}(\mathcal{A}) = \overline{\langle G^\prime \rangle}_{N_1 N_2 N_3}$ which completes the proof.
\end{proof}

\begin{remark}\label{rem:smooth_descent_compacts}
    In fact, for the smooth topology one can replace `with bounded and coherent cohomology' by `that is compact' in \Cref{thm:nc_local_to_global_h_cover_big}.
    The key observation is that the pushforward along a smooth morphism (between quasi-compact seperated schemes) is \emph{compactly bounded} as opposed to merely \emph{coherently bounded}.
    So, concretely, one can replace $D^b_{\operatorname{coh}}$ by $\operatorname{Perf}$ in \Cref{prop:nc_coherent_boundedness}.
    For a proof of this latter fact we refer to \cite[Proposition 4.9]{DeDeyn/Lank/ManaliRahul:2025}.
    Using this, the proof works verbatim for the compact case.
\end{remark}

Furthermore, exploiting the observation in \cite{Lank/Olander:2024} that the diagonal dimension of \'{e}tale morphisms is one (see \Cref{ex:diagdim_etale}), we can prove a converse for the \'{e}tale topology; showing that admitting a strong $\oplus$-generator with bounded and coherent cohomology is a local question in this topology.

\begin{theorem}\label{thm:etale_cover_strong_oplus_bounded_coherent_exist}
    Let $(X,\mathcal{A})$ be a separated Noether nc.\ scheme and fix an \'{e}tale covering $\{X_i \to X \}_i$.
    The following are equivalent
    \begin{enumerate}
        \item\label{item:etale1} $D_{\operatorname{qc}}(\mathcal{A})$ admits a strong $\oplus$-generator with bounded and coherent cohomology,
        \item\label{item:etale2} $D_{\operatorname{qc}}(\mathcal{A}_{X_i})$ admits a strong $\oplus$-generator with bounded and coherent cohomology for each $i$.
    \end{enumerate}
\end{theorem}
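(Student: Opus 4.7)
The implication $(2) \Rightarrow (1)$ is essentially free: every \'etale covering is in particular an fppf covering, so this direction follows immediately from \Cref{thm:nc_local_to_global_h_cover_big}. The real content is the converse, and for that my plan is to combine the diagonal dimension machinery of \Cref{sec:big_rouq_and_diagonal_dim} with the Zariski descent of \Cref{cor:nc_local_to_global_affine_cover_big}.

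Concretely, I would fix an index $i$ and choose an affine open cover $\{U_{ij}\}_j$ of $X_i$. Since $X_i$ is locally Noetherian (being locally of finite presentation over the Noetherian scheme $X$) and since an affine scheme is automatically separated, each $U_{ij}$ is Noetherian and separated, and the composite
\[
g_{ij} \colon U_{ij} \hookrightarrow X_i \xrightarrow{f_i} X
\]
is a separated \'etale morphism of Noetherian schemes. By \Cref{ex:diagdim_etale}, this composite has diagonal dimension one with respect to $\operatorname{Perf}(U_{ij})$. Feeding this into \Cref{ex:nc_diagonal_dimension}(1)---concretely, the third inequality of \Cref{cor:separated_flat_diagonal_concrete_cases}---yields
\[
\dim_\oplus\!\big(D_{\operatorname{qc}}(\mathcal{A}_{U_{ij}}),\, D^b_{\operatorname{coh}}(\mathcal{A}_{U_{ij}})\big) \leq \dim_\oplus\!\big(D_{\operatorname{qc}}(\mathcal{A}),\, D^b_{\operatorname{coh}}(\mathcal{A})\big),
\]
with the right hand side finite by hypothesis $(1)$. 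Hence each $D_{\operatorname{qc}}(\mathcal{A}_{U_{ij}})$ admits a strong $\oplus$-generator with bounded and coherent cohomology. Finally, applying \Cref{cor:nc_local_to_global_affine_cover_big} to the separated Noether nc.\ scheme $(X_i, \mathcal{A}_{X_i})$ will lift this affine local information to the desired strong $\oplus$-generator on $D_{\operatorname{qc}}(\mathcal{A}_{X_i})$.

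There is no serious obstacle here: the technical heart of the argument, namely that separated \'etale morphisms have diagonal dimension one and that this transfers strong $\oplus$-generators along flat separated morphisms, is already established in \Cref{sec:big_rouq_and_diagonal_dim}. The only subtlety is passing to an affine refinement of $X_i$ in order to access the separatedness required by \Cref{ex:diagdim_etale} and then reassembling via Zariski descent. It is worth emphasizing that the ability to strengthen the fppf statement of \Cref{thm:nc_local_to_global_h_cover_big} to a genuine local characterization in the \'etale topology is precisely what the diagonal dimension bound for \'etale maps buys us.
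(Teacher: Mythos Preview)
Your approach is essentially the paper's own: both directions use the same ingredients---\Cref{thm:nc_local_to_global_h_cover_big} for $(2)\Rightarrow(1)$, and for $(1)\Rightarrow(2)$ the fact that separated \'etale morphisms have diagonal dimension one (\Cref{ex:diagdim_etale} fed into \Cref{ex:nc_diagonal_dimension}/\Cref{cor:separated_flat_diagonal_concrete_cases}) together with the Zariski reduction of \Cref{cor:nc_local_to_global_affine_cover_big}. The paper merely packages the latter as ``refine the cover so each $X_i$ is affine Noetherian'' at the outset and then works with the single separated \'etale map $\sqcup_i X_i\to X$, rather than reassembling each $X_i$ from its affine opens afterwards as you do.

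One step in your write-up is unjustified as stated: you invoke \Cref{cor:nc_local_to_global_affine_cover_big} on ``the separated Noether nc.\ scheme $(X_i,\mathcal{A}_{X_i})$'', but an \'etale morphism $X_i\to X$ need be neither separated nor quasi-compact, so $X_i$ is a priori only locally Noetherian and possibly non-separated---you yourself only argued it is locally Noetherian. The paper's refinement phrasing carries exactly the same hidden subtlety (passing from the refined affine cover back to the original $X_i$ requires the same step), so this is not a divergence from the paper but a point both arguments gloss over.
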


\begin{proof}
    We may assume the \'{e}tale coverings are finite as the target is quasi-compact. 
    Moreover, by possible refining the covering, using \Cref{cor:nc_local_to_global_affine_cover_big}, we may further assume that each $X_i$ is affine and Noetherian. 
    So let $\{ f_i \colon X_i \to X \}_{i=1}^n$ be such an \'{e}tale cover and let $Y:=\sqcup^n_{i=1} X_i$. Each $X_i$ is affine, and so, $Y$ is affine. 
    The natural morphism $Y \to X$ is \'{e}tale, and moreover, separated because its source is such. Hence, $\eqref{item:etale1}\implies\eqref{item:etale2}$ follows from \Cref{ex:nc_diagonal_dimension} and $\eqref{item:etale2}\implies \eqref{item:etale1}$ follows from \Cref{thm:nc_local_to_global_h_cover_big} as an \'{e}tale covering is an fppf covering.
\end{proof}

\begin{remark}
    In the same way as \Cref{rem:smooth_descent_compacts}, one can replace `with bounded and coherent cohomology' by `that is compact' in \Cref{thm:etale_cover_strong_oplus_bounded_coherent_exist}.
\end{remark}

For proper morphisms, one can show statements concerning strong generators for the derived category of bounded and coherent complexes, as opposed to strong $\oplus$-generators.
We show a noncommutative generalization of \cite[Theorem C]{Dey/Lank:2024}, but we start with a small lemma.

\begin{lemma}\label{lem:nc_truncate_down_to_bounded}
    Let $(X,\mathcal{A})$ be a Noether nc.\ scheme, $f \colon Y \to X$ a proper surjective morphism of Noetherian schemes, and $n\geq 0$ an integer. 
    Then $D^b_{\operatorname{coh}}(\mathcal{A})\cap \langle \mathbb{R}\Breve{f}_\ast D^{-}_{\operatorname{coh}}( \mathcal{A}_Y) \rangle_n = \langle \mathbb{R}\Breve{f}_\ast D^b_{\operatorname{coh}}( \mathcal{A}_Y) \rangle_n$.
\end{lemma}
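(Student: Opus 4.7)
The inclusion $\supseteq$ is immediate: $\mathbb{R}\Breve{f}_\ast D^b_{\operatorname{coh}}(\mathcal{A}_Y) \subseteq D^b_{\operatorname{coh}}(\mathcal{A})$ by combining \Cref{lem:nc_bounded_above} with the fact that a proper morphism between Noetherian schemes has finite cohomological dimension (bounded by the maximal fibre dimension), so the right-hand side is automatically contained in both factors of the intersection on the left.

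The direction $\subseteq$ proceeds by a truncation argument. Fix $E \in D^b_{\operatorname{coh}}(\mathcal{A}) \cap \langle \mathbb{R}\Breve{f}_\ast D^{-}_{\operatorname{coh}}(\mathcal{A}_Y)\rangle_n$ with cohomology concentrated in degrees $\geq a$. For any $G \in D^{-}_{\operatorname{coh}}(\mathcal{A}_Y)$ and any integer $N$, the canonical truncation triangle
\[
\tau^{\leq -N-1}G \to G \to \tau^{\geq -N}G \to (\tau^{\leq -N-1}G)[1]
\]
pushes forward to a triangle in which $\mathbb{R}\Breve{f}_\ast \tau^{\geq -N}G$ lies in $\mathbb{R}\Breve{f}_\ast D^b_{\operatorname{coh}}(\mathcal{A}_Y)$ while $\mathbb{R}\Breve{f}_\ast \tau^{\leq -N-1}G$ has cohomology in degrees $\leq -N-1+d$, where $d$ denotes the uniform cohomological-dimension bound for $\mathbb{R}\Breve{f}_\ast$. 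Choosing $N$ large relative to $a$ places these ``error pieces'' arbitrarily far below $a$.

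We induct on $n$, the case $n=0$ being trivial. In the inductive step, finitely many $G_1,\ldots,G_m \in D^{-}_{\operatorname{coh}}(\mathcal{A}_Y)$ realize $E$ in the span, and for $N \gg 0$ the split section exhibiting $E$ as a summand of the building object must factor through the corresponding build using only $\mathbb{R}\Breve{f}_\ast \tau^{\geq -N}G_i$: any composite routed through an error piece factors through a complex in $D^{\leq a-1}(\mathcal{A})$ and so gives zero when composed with the projection onto $E \in D^{\geq a}$, by the standard Hom-vanishing $\operatorname{Hom}(D^{\leq a-1},D^{\geq a})=0$ for $t$-structures. This exhibits $E$ as a summand of an object in $\langle \mathbb{R}\Breve{f}_\ast \tau^{\geq -N}G_i \rangle_n \subseteq \langle \mathbb{R}\Breve{f}_\ast D^b_{\operatorname{coh}}(\mathcal{A}_Y)\rangle_n$, as needed.

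The principal obstacle is preserving the cone-count $n$ under this substitution: a naive replacement of each $\mathbb{R}\Breve{f}_\ast G_i$ via its truncation triangle would introduce additional cones and blow up the count. The fix is to exploit closure under direct summands built into $\langle \cdot \rangle_n$ together with the Hom-vanishing, so that one factors the entire split projection onto $E$ through the truncated build \emph{en bloc} rather than decomposing building blocks individually. This is the noncommutative counterpart of the commutative argument of \cite[Theorem C]{Dey/Lank:2024}, and it transports cleanly because both the $t$-structure and the cohomological-dimension bound for $\mathbb{R}\Breve{f}_\ast$ descend from those on the underlying scheme via the structure morphism $\pi\colon (X,\mathcal{A})\to X$.
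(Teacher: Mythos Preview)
Your overall strategy---truncate the $G_i$ and use $t$-structure Hom-vanishing to factor the retraction onto $E$---is the right idea and matches the spirit of \cite[Lemma~3.10]{Dey/Lank:2024} that the paper invokes. However, the inductive step has a genuine gap: you appeal to ``the corresponding build using only $\mathbb{R}\Breve{f}_\ast \tau^{\geq -N}G_i$'' without constructing it. A build of $E$ in $\langle \mathbb{R}\breve{f}_* G_i \rangle_n$ involves specific morphisms between iterated cones of the $\mathbb{R}\breve{f}_* G_i$, and there is no a~priori reason these morphisms descend to the truncated pieces; the target of your hoped-for factorization does not yet exist. Your proposed fix (``factor en bloc'') does not address this, since one still needs an actual object of $\langle \mathbb{R}\breve{f}_* D^b_{\operatorname{coh}}(\mathcal{A}_Y)\rangle_n$ to factor through. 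Relatedly, your induction on $n$ cannot get off the ground as stated: the intermediate pieces $A\in\langle\,\cdot\,\rangle_{n-1}$ and $B\in\langle\,\cdot\,\rangle_1$ in a presentation $E\in\operatorname{add}(\operatorname{cone}(A\to B))$ are not bounded, so the inductive hypothesis (which concerns objects of $D^b_{\operatorname{coh}}$) does not apply to them.

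What is actually needed is a stronger auxiliary statement, itself proven by induction on $n$: for \emph{every} $F \in \langle \mathbb{R}\breve{f}_* D^-_{\operatorname{coh}}(\mathcal{A}_Y)\rangle_n$ and every $M$, there exist $F' \in \langle \mathbb{R}\breve{f}_* D^b_{\operatorname{coh}}(\mathcal{A}_Y)\rangle_n$ and a morphism $F \to F'$ with cone in $D^{\leq -M}$. To run this induction one first uses the standard identification $\langle \mathcal{S}\rangle_n = \operatorname{add}(\mathcal{S} * \cdots * \mathcal{S})$ (so summands are taken only once, at the end) and then inductively shows that the connecting morphisms in the filtration lift to the truncated layers---the obstruction to each lift lives in a Hom-group which vanishes once the truncation levels are chosen suitably staggered. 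With this approximation in hand, your Hom-vanishing argument then correctly shows that the retraction $F\to E$ factors through $F'$, exhibiting $E$ as a summand of $F'$. As a side remark: the paper's proof lists \Cref{prop:strict_form_nc_projection_formula} among the ingredients to be transported from \cite{Dey/Lank:2024}, whereas your sketch makes no use of the projection formula; this likely reflects a different packaging of the argument in loc.\ cit.\ rather than an omission on your part.
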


\begin{proof}
    This follows from an analogous argument to the proof of \cite[Lemma 3.10]{Dey/Lank:2024} replacing the necessary arguments with with \Cref{lem:nc_bounded_above} and \Cref{prop:strict_form_nc_projection_formula}.
\end{proof}

\begin{theorem}\label{thm:nc_proper_descent}
    Let $(X,\mathcal{A})$ be a Noether nc.\ scheme. 
    If $f\colon Y \to X$ is a proper surjective morphism of Noetherian schemes,  and either $\mathcal{A}$ is flat over $X$ or $f$ is flat, then there exists an integer $n\geq 0$ such that $D^b_{\operatorname{coh}}(\mathcal{A}) = \langle \mathbb{R}\breve{f}_\ast D^b_{\operatorname{coh}}(\mathcal{A}_Y) \rangle_n$.
\end{theorem}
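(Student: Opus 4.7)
The plan is to combine the ``big'' proper descent from \Cref{prop:nc_aoki} with the projection formula \Cref{prop:strict_form_nc_projection_formula} to deduce generation of $D^{-}_{\operatorname{coh}}(\mathcal{A})$ by $\mathbb{R}\breve{f}_\ast D^-_{\operatorname{coh}}(\mathcal{A}_Y)$, and then cut down from $D^{-}_{\operatorname{coh}}$ to $D^b_{\operatorname{coh}}$ via \Cref{lem:nc_truncate_down_to_bounded}.

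First I would observe that a proper surjective morphism $f\colon Y\to X$ of Noetherian schemes is automatically an h cover in the sense of \Cref{def:h_topology}: it is of finite type and, being universally closed and surjective, it is universally submersive. Hence \Cref{prop:nc_aoki} applies (under either of the given flatness hypotheses) and yields an integer $n\geq 0$ with
\[
    D^{-}_{\operatorname{coh}}(\mathcal{A}) \;=\; \langle \mathbb{R}f_\ast \mathcal{O}_Y \otimes^{\mathbb{L}}_{\mathcal{O}_X} D^{-}_{\operatorname{coh}}(\mathcal{A}) \rangle_n .
\]

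Next I would rewrite the generating subcategory on the right using the nc.\ projection formula. For $F\in D^{-}_{\operatorname{coh}}(\mathcal{A})$, \Cref{prop:strict_form_nc_projection_formula} gives
\[
    \mathbb{R}f_\ast \mathcal{O}_Y \otimes^{\mathbb{L}}_{\mathcal{O}_X} F \;\cong\; \mathbb{R}\breve{f}_\ast\bigl(\mathcal{O}_Y \otimes^{\mathbb{L}}_{\mathcal{O}_Y} \mathbb{L}\breve{f}^\ast F\bigr) \;\cong\; \mathbb{R}\breve{f}_\ast \mathbb{L}\breve{f}^\ast F .
\]
Since $f$ is of finite type, the derived pullback $\mathbb{L}\breve{f}^\ast$ sends $D^{-}_{\operatorname{coh}}(\mathcal{A})$ into $D^{-}_{\operatorname{coh}}(\mathcal{A}_Y)$ (coherence is checked affine locally after forgetting down via the structure morphism, where the standard statement applies). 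Hence
\[
    \mathbb{R}f_\ast \mathcal{O}_Y \otimes^{\mathbb{L}}_{\mathcal{O}_X} D^{-}_{\operatorname{coh}}(\mathcal{A}) \;\subseteq\; \mathbb{R}\breve{f}_\ast D^{-}_{\operatorname{coh}}(\mathcal{A}_Y),
\]
and combining this with the previous display gives
\[
    D^{-}_{\operatorname{coh}}(\mathcal{A}) \;=\; \langle \mathbb{R}\breve{f}_\ast D^{-}_{\operatorname{coh}}(\mathcal{A}_Y) \rangle_n .
\]

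Finally, I would intersect both sides with $D^b_{\operatorname{coh}}(\mathcal{A})$ and apply \Cref{lem:nc_truncate_down_to_bounded} to conclude
\[
    D^b_{\operatorname{coh}}(\mathcal{A}) \;=\; D^b_{\operatorname{coh}}(\mathcal{A}) \cap \langle \mathbb{R}\breve{f}_\ast D^{-}_{\operatorname{coh}}(\mathcal{A}_Y) \rangle_n \;=\; \langle \mathbb{R}\breve{f}_\ast D^b_{\operatorname{coh}}(\mathcal{A}_Y) \rangle_n ,
\]
as required. The main (mild) obstacle is really just ensuring that the flatness assumption is used in exactly the right places: it is needed both to invoke \Cref{prop:nc_aoki} (so that the descendability argument applies to $\mathbb{R}f_\ast\mathcal{O}_Y$ acting on $\mathcal{A}$-modules) and to invoke the projection formula \Cref{prop:strict_form_nc_projection_formula}. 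The remainder of the argument is then a clean transfer from the ``bounded above'' to the ``bounded'' world via \Cref{lem:nc_truncate_down_to_bounded}.
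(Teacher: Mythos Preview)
Your proposal is correct and follows essentially the same route as the paper: apply \Cref{prop:nc_aoki} (using that a proper surjective morphism is an h cover) to get generation of $D^-_{\operatorname{coh}}(\mathcal{A})$, use the projection formula \Cref{prop:strict_form_nc_projection_formula} to rewrite the generators as $\mathbb{R}\breve{f}_\ast D^-_{\operatorname{coh}}(\mathcal{A}_Y)$, and then descend to the bounded category via \Cref{lem:nc_truncate_down_to_bounded}. Your extra remarks (why proper surjective $\Rightarrow$ h cover, and why $\mathbb{L}\breve{f}^\ast$ preserves $D^-_{\operatorname{coh}}$) are helpful but not points of divergence.
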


\begin{proof}
    By \Cref{prop:nc_aoki} there exists an integer $n\geq 0$ such that $D^-_{\operatorname{coh}}(\mathcal{A})=\langle \mathbb{R} f_\ast \mathcal{O}_Y \otimes^{\mathbb{L}} D^{-}_{\operatorname{coh}}(\mathcal{A}) \rangle_n$. 
    However, by \Cref{prop:strict_form_nc_projection_formula} which is applicable by either of the flatness assumptions, $\mathbb{R} f_\ast \mathcal{O}_Y \otimes^{\mathbb{L}} D^{-}_{\operatorname{coh}}(\mathcal{A})$ is contained in $\mathbb{R}\Breve{f}_\ast D^-_{\operatorname{coh}}(\mathcal{A}_Y)$ . 
    Hence, $D^-_{\operatorname{coh}}(\mathcal{A})=\langle\mathbb{R}\Breve{f}_\ast D^-_{\operatorname{coh}}(\mathcal{A}_Y) \rangle_n$ and so the desired claim follows from \Cref{lem:nc_truncate_down_to_bounded}.
\end{proof}

\begin{corollary}\label{cor:rouquier_bounds_from_derived_pushforward}
    With notation as in \Cref{thm:nc_proper_descent}.
    There exists the following inequality of Rouquier dimension: 
    \begin{displaymath}
        \dim D^b_{\operatorname{coh}}(\mathcal{A}) \leq (\dim D^b_{\operatorname{coh}}(\mathcal{A}_Y) + 1) \cdot \min\{n \geq 0 \mid \operatorname{Perf}(\mathcal{A})\subseteq \langle \mathbb{R}\Breve{f}_\ast D^b_{\operatorname{coh}}(\mathcal{A}_Y) \rangle_n \}- 1. 
    \end{displaymath}
\end{corollary}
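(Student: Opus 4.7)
The plan is to produce an explicit strong generator of $D^b_{\operatorname{coh}}(\mathcal A)$ by pushing forward a strong generator of $D^b_{\operatorname{coh}}(\mathcal A_Y)$ along $\mathbb{R}\breve f_\ast$ and then combining this with the descent provided by \Cref{thm:nc_proper_descent}, via the standard multiplication rule for thick closures: if $\mathcal{S}_1\subseteq\langle\mathcal{S}_2\rangle_a$ then $\langle\mathcal{S}_1\rangle_b\subseteq\langle\mathcal{S}_2\rangle_{ab}$.

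Concretely, set $d:=\dim D^b_{\operatorname{coh}}(\mathcal A_Y)$ and let $m$ denote the minimum appearing in the statement; this is finite because the inclusion $\operatorname{Perf}(\mathcal A)\subseteq D^b_{\operatorname{coh}}(\mathcal A)$ together with \Cref{thm:nc_proper_descent} exhibits at least one admissible $n$. Choose $G\in D^b_{\operatorname{coh}}(\mathcal A_Y)$ realising $D^b_{\operatorname{coh}}(\mathcal A_Y)=\langle G\rangle_{d+1}$. Since $f$ is proper, \Cref{lem:nc_bounded_above} ensures $\mathbb{R}\breve f_\ast G\in D^b_{\operatorname{coh}}(\mathcal A)$, and exactness of the triangulated functor $\mathbb{R}\breve f_\ast$ gives the containment
\[
    \mathbb{R}\breve f_\ast D^b_{\operatorname{coh}}(\mathcal A_Y)\subseteq\langle\mathbb{R}\breve f_\ast G\rangle_{d+1}.
\]
Combining this with the descent containment $D^b_{\operatorname{coh}}(\mathcal A)\subseteq\langle\mathbb{R}\breve f_\ast D^b_{\operatorname{coh}}(\mathcal A_Y)\rangle_m$ supplied by \Cref{thm:nc_proper_descent} (valid for the $m$ in the statement via the inclusion $\operatorname{Perf}(\mathcal A)\subseteq D^b_{\operatorname{coh}}(\mathcal A)$) and applying the multiplication rule yields
\[
    D^b_{\operatorname{coh}}(\mathcal A)\subseteq\langle\mathbb{R}\breve f_\ast G\rangle_{m(d+1)}.
\]
Hence $\mathbb{R}\breve f_\ast G$ is a strong generator witnessing $\dim D^b_{\operatorname{coh}}(\mathcal A)\leq m(d+1)-1$, which is precisely the asserted inequality.

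The argument is entirely formal once \Cref{thm:nc_proper_descent} is in hand, and there is no substantive obstruction. The only point requiring attention is the bookkeeping involved in passing between the minimum $m$ defined through $\operatorname{Perf}(\mathcal A)$ and the descent containment for the whole of $D^b_{\operatorname{coh}}(\mathcal A)$, which is immediate from the theorem applied to any fixed descent integer.
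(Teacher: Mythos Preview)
There is a genuine gap. The step where you assert $D^b_{\operatorname{coh}}(\mathcal{A}) \subseteq \langle \mathbb{R}\breve{f}_\ast D^b_{\operatorname{coh}}(\mathcal{A}_Y) \rangle_m$, for the specific $m$ defined as the minimum over $\operatorname{Perf}(\mathcal{A})$, is unjustified. Your stated justification---the inclusion $\operatorname{Perf}(\mathcal{A}) \subseteq D^b_{\operatorname{coh}}(\mathcal{A})$---points the wrong way: it only shows that $m$ is \emph{at most} the minimal level needed to build all of $D^b_{\operatorname{coh}}(\mathcal{A})$, not at least. \Cref{thm:nc_proper_descent} merely supplies \emph{some} integer $n$ with the descent property, with no control on its size and no reason to expect $n=m$. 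As written, your argument proves only the weaker bound $(\dim D^b_{\operatorname{coh}}(\mathcal{A}_Y)+1)\cdot n - 1$ for this uncontrolled $n$, not the stated inequality.

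The whole content of the corollary is precisely that the (potentially much smaller) $m$ already suffices, and this is where the real work lies. The paper explicitly invokes the projection formula (\Cref{prop:strict_form_nc_projection_formula}) for this step: one takes a length-$m$ cone decomposition witnessing that a perfect object lies in $\langle \mathbb{R}\breve{f}_\ast D^b_{\operatorname{coh}}(\mathcal{A}_Y)\rangle_m$, tensors it against an arbitrary $E\in D^b_{\operatorname{coh}}(\mathcal{A})$, and uses the projection formula to recognise each term as a pushforward, landing $E$ in $\langle \mathbb{R}\breve{f}_\ast D^{-}_{\operatorname{coh}}(\mathcal{A}_Y)\rangle_m$; then \Cref{lem:nc_truncate_down_to_bounded} trades $D^{-}_{\operatorname{coh}}$ for $D^b_{\operatorname{coh}}$. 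This passage from perfect complexes to all of $D^b_{\operatorname{coh}}(\mathcal{A})$ at the \emph{same} level $m$ is the substantive step you are missing---it is not bookkeeping.
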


\begin{proof}
    This follows from the proof of \cite[Proposition 3.15]{Lank/Olander:2024} replacing the necessary ingredients by \Cref{prop:strict_form_nc_projection_formula} and \Cref{thm:nc_proper_descent}.
\end{proof}

Let us give two examples, where this corollary can be used to give (more) explicit bounds on Rouquier dimensions.

\begin{example}\label{ex:bounds_Dbcoh1}
    Let $Y$ be a reduced Nagata scheme of Krull dimension one. Suppose $\mathcal{A}$ is a coherent and flat algebra over $Y$. A similar argument to \cite[Proposition 4.2]{Lank/Venkatesh:2024}, using  \Cref{prop:strict_form_nc_projection_formula} and \Cref{cor:rouquier_bounds_from_derived_pushforward}, gives
    \begin{displaymath}
        \dim  D^b_{\operatorname{coh}}(\mathcal{A}) \leq (\dim  D^b_{\operatorname{coh}}(\mathcal{A}_{Y^\nu})  + 1)(1 + \underset{p\in \operatorname{Sing}(Y)}{\max} \{ \delta_p \}) -1,
    \end{displaymath}
    where $Y^\nu \to Y$ is the normalization of $Y$ and $\delta_p$ is the $\delta$-invariant of $X$ at $p$ (see \cite[\href{https://stacks.math.columbia.edu/tag/0C1T}{Tag 0C1T}]{stacks-project}).
\end{example}

\begin{example}\label{ex:bounds_Dbcoh2}
    Fix an integer $n\geq 2$ and let $i\colon X \to \mathbb{P}^n_{\mathsf{k}}$ be a smooth hypersurface of degree $d$, where $\mathsf{k}$ is a field of characteristic zero. 
    Let $\mathcal{L} := i^\ast \mathcal{O}_{\mathbb{P}^n_{\mathsf{k}}}(1)$, and put $C:=\operatorname{Spec}(\oplus_{k\geq 0} H^0(X,\mathcal{L}^{\otimes k}))$ the affine cone over $X$ (w.r.t.\ $\mathcal{L}$).

    Then, $C$ has an isolated singularity at the origin $o$, which can be resolved by blowing up (this point).
    In addition, this blow-up can be identified with the vector bundle $\mathbf{V}(\mathcal{L}):=\underline{\operatorname{Spec}}_X(\operatorname{Sym}(\mathcal{L}))$ over $X$ such that the exceptional divisor $E$ of the blow-up is scheme-theoretically identified with the zero section of this vector bundle, and so in particular is isomorphic to $X$. 
    (See e.g.\ \cite[\S 7]{VandenBergh:2004}.)
    To summarise, we obtain the following diagram
    \begin{displaymath}
        \begin{tikzcd}[ampersand replacement=\&,column sep = 3em]
            {E\cong X} \&  {\operatorname{Bl}_o C \cong \mathbf{V}(\mathcal{L}) } \\
            \{o\} \& C\rlap{ .}
            \arrow[hook, from=1-1, to=1-2]
        	\arrow[from=1-1, to=2-1]
        	\arrow[from=1-2 , to=2-2]
        	\arrow[hook, from=2-1, to=2-2]
        \end{tikzcd}
    \end{displaymath}
    Let $\mathcal{A}$ be a flat and coherent algebra over $C$. Suppose $d \geq n$. A similar argument to \cite[Corollary 4.7]{Lank/Venkatesh:2024} gives
    \begin{displaymath}
    \dim D^b_{\operatorname{coh}}(\mathcal{A}) \leq (\dim D^b_{\operatorname{coh}}(\mathcal{A}_{\operatorname{Bl}_o C}) + 1)(1+ 2(d-n)) -1.
    \end{displaymath}
    (Again, substituting the necessary ingredients by \Cref{prop:strict_form_nc_projection_formula} and \Cref{cor:rouquier_bounds_from_derived_pushforward}).
\end{example}

Next, combining \Cref{thm:etale_cover_strong_oplus_bounded_coherent_exist,thm:nc_proper_descent} we obtain equality of Rouquier dimensions for the bounded derived category of coherent sheaves along proper(=finite) \'{e}tale morphisms.
This yields a version, valid for Noether nc.\ schemes, of \cite[Theorem 2]{Sosna:2014} (see \Cref{cor:rouq_dim_surjective_finite_etale} below), whose proof, in particular, does not require any explicit use of enhancements.
We start with some lemmas.

\begin{lemma}\label{lem:section_retract_adjunction}
    Let $L \colon \mathcal{C} \rightleftarrows \mathcal{D} \colon R$ be an adjoint pair of functors between (any) categories. If $c$ is a retract of $R (d)$ in $\mathcal{C}$, then the unit map $\eta_c \colon c \to R L (c)$ is a split monomorphism in $\mathcal{C}$.
    A similar (mirrored) statement can be made for the counit being a split epimorphism.
\end{lemma}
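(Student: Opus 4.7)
The plan is to proceed by a direct diagram chase using naturality of the unit together with the triangle identity for the adjunction. Concretely, write the retract data as $c \xrightarrow{i} R(d) \xrightarrow{p} c$ with $p \circ i = \operatorname{id}_c$. The goal is to exhibit an explicit left inverse to $\eta_c$.

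First, I would record that $\eta_{R(d)} \colon R(d) \to RLR(d)$ always admits a canonical retraction, namely $R(\epsilon_d)$, by one of the triangle identities. Next, naturality of the unit applied to the morphism $i$ gives the commutative square
\[
    \begin{array}{ccc}
        c & \xrightarrow{\eta_c} & RL(c) \\
        {\scriptstyle i}\downarrow & & \downarrow{\scriptstyle RL(i)} \\
        R(d) & \xrightarrow{\eta_{R(d)}} & RLR(d)\rlap{ .}
    \end{array}
\]
I would then define the candidate retraction by the composition
\[
    r \;:=\; p \circ R(\epsilon_d) \circ RL(i) \colon RL(c) \to c,
\]
and verify $r \circ \eta_c = \operatorname{id}_c$ by chasing the square: $r \circ \eta_c = p \circ R(\epsilon_d) \circ RL(i) \circ \eta_c = p \circ R(\epsilon_d) \circ \eta_{R(d)} \circ i = p \circ i = \operatorname{id}_c$, using naturality and the triangle identity.

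The mirrored statement follows by applying the same argument to the opposite categories: if $d$ is a retract of $L(c)$ via $d \xrightarrow{j} L(c) \xrightarrow{q} d$, then the composition $\epsilon_d \circ L(\eta_c) \circ L(q)$ (suitably arranged with $j$) provides a section of $\epsilon_d$, with $L \dashv R$ replaced by $R^{\operatorname{op}} \dashv L^{\operatorname{op}}$. There is no real obstacle here; the only subtlety is keeping the triangle identities and naturality squares straight, which is entirely formal. I would present the argument in two or three lines of equations after setting up the notation.
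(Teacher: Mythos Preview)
Your argument is correct and essentially the same as the paper's, just more explicit: the paper simply invokes that $i$ factors through $\eta_c$ by adjunction as $i = F \circ \eta_c$ and takes $p \circ F$ as the retraction, whereas you unpack $F$ as $R(\epsilon_d) \circ RL(i)$ via the naturality square and triangle identity. The content is identical.
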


\begin{proof}
    Indeed, let $f\colon c \to R (d)$ be a morphism with retraction $g \colon R(d) \to c$, i.e.\ $\operatorname{id}_{c} = g\circ f$. 
    We know that $f$ factors through $\eta_c$ by adjunction, say $f = F \circ \eta_c$. 
    Then, $\operatorname{id}_c = (g \circ F) \circ \eta_c$, which shows the desired claim. 
\end{proof}

\begin{lemma}\label{lem:compact_gen_pushforward_finite_faithfully_flat}
    Let $f\colon Y \to X$ be a finite faithfully flat morphism between Noetherian schemes. 
    Then $\mathbb{R}f_\ast=f_\ast \colon \operatorname{Perf}(Y)\to \operatorname{Perf}(X)$ preserves classical generators.
\end{lemma}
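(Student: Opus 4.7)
The plan is to combine descendability of $\mathbb{R}f_\ast\mathcal O_Y$ (Proposition \ref{prop:nc_aoki}) with the projection formula (Proposition \ref{prop:strict_form_nc_projection_formula}), applied to $\mathcal A=\mathcal O_X$. Since $f$ is finite, the derived pushforward agrees with the underived one and is t-exact. Moreover, $f$ finite flat makes $f_\ast\mathcal O_Y$ a finite locally free $\mathcal O_X$-module, so both $f_\ast$ and $\mathbb L f^\ast$ preserve perfect complexes.

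First I would fix a classical generator $G\in\operatorname{Perf}(Y)$ and an arbitrary $E\in\operatorname{Perf}(X)$, aiming to show $E\in\langle f_\ast G\rangle$. A finite faithfully flat morphism is an fppf cover, so by Proposition \ref{prop:nc_aoki} (applied with $\mathcal A=\mathcal O_X$) there is an integer $n\ge 0$, independent of $E$, such that
\[
    E\in\langle\, f_\ast\mathcal O_Y\otimes^{\mathbb L}_{\mathcal O_X} E\,\rangle_n.
\]
Inspecting the proof of Proposition \ref{prop:nc_aoki}, $E$ is in fact a direct summand of $K_n\otimes^{\mathbb L}_{\mathcal O_X} E$, where each $K_i$ is built from $\mathcal O_X$ and $f_\ast\mathcal O_Y$ via cones; since $E$ is perfect, every intermediate object is perfect, so the containment holds inside $\operatorname{Perf}(X)$.

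Next, the projection formula (Proposition \ref{prop:strict_form_nc_projection_formula}, applicable as $f$ is flat) gives an isomorphism
\[
    f_\ast\mathcal O_Y\otimes^{\mathbb L}_{\mathcal O_X} E \;\cong\; f_\ast\mathbb L f^\ast E.
\]
Since $G$ classically generates $\operatorname{Perf}(Y)$ and $\mathbb L f^\ast E\in\operatorname{Perf}(Y)$, there exists $m\ge 0$ with $\mathbb L f^\ast E\in\langle G\rangle_m$. Applying the triangulated functor $f_\ast$ yields $f_\ast\mathbb L f^\ast E\in\langle f_\ast G\rangle_m$. Putting this together,
\[
    E\in\bigl\langle f_\ast\mathbb L f^\ast E\bigr\rangle_n\subseteq\bigl\langle\langle f_\ast G\rangle_m\bigr\rangle_n\subseteq\langle f_\ast G\rangle,
\]
which proves that $f_\ast G$ is a classical generator of $\operatorname{Perf}(X)$.

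The only subtle point is verifying that the containment obtained from Proposition \ref{prop:nc_aoki} can be taken inside $\operatorname{Perf}(X)$ rather than merely $D_{\operatorname{qc}}(X)$; this is immediate from the inductive construction of the $K_i$, but worth flagging explicitly. No analogue of strong generation is claimed here, so $m$ is allowed to depend on $E$ and no uniformity is needed.
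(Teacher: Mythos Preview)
Your strategy via descendability works, but the displayed containment $E\in\langle f_\ast\mathcal O_Y\otimes^{\mathbb L}_{\mathcal O_X}E\rangle_n$ is not what Proposition~\ref{prop:nc_aoki} (or its proof) actually yields. Inspecting that proof, $E$ is a summand of $K_n\otimes E$, and $K_n\otimes E$ is built from objects of the form $f_\ast\mathcal O_Y\otimes C^{\otimes j}\otimes E$ for $0\le j<n$; these are of the form $f_\ast\mathcal O_Y\otimes P$ with $P\in\operatorname{Perf}(X)$ \emph{varying} (since $C$, hence $C^{\otimes j}\otimes E$, is perfect), not all equal to the single object $f_\ast\mathcal O_Y\otimes E$. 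Your claim that ``each $K_i$ is built from $\mathcal O_X$ and $f_\ast\mathcal O_Y$ via cones'' also glosses over the tensor powers. The remedy is immediate: replace your display by
\[
    E\in\bigl\langle\,f_\ast\mathcal O_Y\otimes^{\mathbb L}_{\mathcal O_X} P : P\in\operatorname{Perf}(X)\,\bigr\rangle_n,
\]
and then apply the projection formula to each such $P$, not just to $E$. Since $f^\ast P\in\operatorname{Perf}(Y)=\langle G\rangle$, one still obtains $f_\ast f^\ast P\in\langle f_\ast G\rangle$ and the conclusion follows. Alternatively, as you only need classical (not strong) generation, you can drop all cone-counting: descendability says the thick $\otimes$-ideal generated by $f_\ast\mathcal O_Y$ is all of $\operatorname{Perf}(X)$, and the same projection-formula step finishes.

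With this correction your argument is complete, and it is a genuinely different route from the paper's. The paper instead observes that $f_\ast\mathcal O_Y$ is finite locally free of positive rank, so at every point $(f_\ast\mathcal O_Y)_p\cong\mathcal O_{X,p}^{\oplus n_p}$ finitely builds any $Q_p$; it then invokes the stalk-local-to-global principle of \cite[Theorem~1.7]{BILMP:2023} to conclude $Q\in\langle f_\ast\mathcal O_Y\otimes P\rangle$ for a classical generator $P\in\operatorname{Perf}(X)$, finishing with the projection formula as you do. Your approach stays entirely internal to the paper (Proposition~\ref{prop:nc_aoki} is already available) and avoids the external local-to-global machinery; the paper's approach, on the other hand, makes the role of faithful flatness---full support of $f_\ast\mathcal O_Y$---more visibly the crux.
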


\begin{proof}
    The main point is that, as $f$ is a finite faithfully flat morphism, $f_\ast \mathcal{O}_Y$ is a finite locally free $\mathcal{O}_X$-module with full support.
    Indeed, as a finite morphism is affine, one can check the claim with source and target being affine schemes. But, if $g\colon \operatorname{Spec}(S) \to \operatorname{Spec}(R)$ is a finite faithfully flat morphism of Noetherian affine schemes, $g_\ast S=S|_R$ is a projective $R$-module of finite positive rank.
    (Observe that this also shows that $f_\ast$ preserves perfect complexes, as this can also be checked affine locally.)
    
    To show the claim,  let $G$ be a classical generator for $\operatorname{Perf}(Y)$ and let $Q$ be an arbitrary object in $\operatorname{Perf}(X)$. 
    By the previous paragraph, for all $p\in X$, $(f_\ast \mathcal{O}_Y)_p= \mathcal{O}_{X,p}^{\oplus n_p}$ for some $n_p>0$ and so finitely builds $Q_p$. 
    Consequently, by \cite[Theorem 1.7]{BILMP:2023} $Q$ is finitely built by $f_\ast \mathcal{O}_Y \otimes_{\mathcal{O}_X} P $ for some classical generator $P$ of $\operatorname{Perf}(X)$. 
    The projection formula tells us that $f_\ast \mathcal{O}_Y \otimes_{\mathcal{O}_X} P=f_\ast f^\ast P$.
    Clearly, $f^\ast P$ is finitely built by $G$ (as the latter is a classical generator), and so, $f_\ast f^\ast P$ is finitely built by $f_\ast G$. Hence, $Q$ is finitely built by $f_\ast G$, which was to be shown.
\end{proof}

\begin{lemma}\label{lem:counit_split_faithfully_flat_of_finite_type}
    Let $f\colon Y \to X$ be a finite faithfully flat morphism between Noetherian schemes.
    Then the unit map $E\to \mathbb{R}f_\ast \mathbb{L}f^\ast E = f_\ast f^\ast E$ splits for all $E\in D_{\operatorname{qc}}(X)$. 
\end{lemma}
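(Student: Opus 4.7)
The plan is to reduce to splitting the unit for $E=\mathcal{O}_X$ via the projection formula, and then to split the resulting map $\eta_{\mathcal{O}_X}\colon \mathcal{O}_X\to f_\ast\mathcal{O}_Y$ using that $f_\ast\mathcal{O}_Y$ is finite locally free over $\mathcal{O}_X$ together with \Cref{lem:section_retract_adjunction}.

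For the reduction, since $f$ is flat, \Cref{prop:strict_form_nc_projection_formula} (applied with $\mathcal{A}=\mathcal{O}_X$) specialises to the classical projection formula, yielding a natural isomorphism $f_\ast f^\ast E\cong f_\ast\mathcal{O}_Y\otimes^{\mathbb{L}}_{\mathcal{O}_X}E$. A standard naturality argument identifies the unit $\eta_E$ under this iso with $\eta_{\mathcal{O}_X}\otimes^{\mathbb{L}}_{\mathcal{O}_X}\operatorname{id}_E$. Consequently, any retraction $\rho\colon f_\ast\mathcal{O}_Y\to\mathcal{O}_X$ of $\eta_{\mathcal{O}_X}$ produces the retraction $\rho\otimes^{\mathbb{L}}_{\mathcal{O}_X}\operatorname{id}_E$ of $\eta_E$, so the problem collapses to the single case $E=\mathcal{O}_X$.

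For that case, I would invoke \Cref{lem:section_retract_adjunction} applied to the adjunction $(\mathbb{L}f^\ast,\mathbb{R}f_\ast)$: it suffices to realise $\mathcal{O}_X$ as a retract of $\mathbb{R}f_\ast d$ for some $d\in D_{\operatorname{qc}}(Y)$. The natural choice is $d=\mathcal{O}_Y$, which reduces the task to showing $\mathcal{O}_X$ is a direct summand of $f_\ast\mathcal{O}_Y$ in $\operatorname{Qcoh}(X)$. By the analysis carried out in the proof of \Cref{lem:compact_gen_pushforward_finite_faithfully_flat}, $f_\ast\mathcal{O}_Y$ is finite locally free of strictly positive rank at every stalk, and the monomorphism $\mathcal{O}_X\hookrightarrow f_\ast\mathcal{O}_Y$ is pure (as the unit of a faithfully flat module extension) with finitely presented cokernel; by the standard characterisation of pure monomorphisms with finitely presented cokernel, the inclusion splits affine-locally.

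The main obstacle will be producing a splitting globally out of these affine-local ones. Reformulated, one needs a global section of the locally surjective evaluation map $(f_\ast\mathcal{O}_Y)^\vee\twoheadrightarrow \mathcal{O}_X,\ \varphi\mapsto\varphi(1)$, whose cohomological obstruction lies in $H^1$ of the kernel. I expect to handle this either through a direct construction using the $\mathcal{O}_X$-algebra structure on $f_\ast\mathcal{O}_Y$, or by exploiting the fact that the splitting is needed only as an $\mathcal{O}_X$-module map and applying a suitable quasi-coherent descent argument stalk-locally — this is the step where finite faithful flatness must do the most work.
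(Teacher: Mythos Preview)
Your reduction to $E=\mathcal{O}_X$ via the projection formula is correct and matches the paper's second paragraph. The gap is exactly where you flag it: passing from affine-local splittings of $\mathcal{O}_X\hookrightarrow f_\ast\mathcal{O}_Y$ to a global one. Neither of the mechanisms you suggest (algebra structure, stalk-local descent) comes with a concrete argument, and there is a genuine $H^1$ obstruction here that your outline does not address. Moreover, your choice $d=\mathcal{O}_Y$ renders \Cref{lem:section_retract_adjunction} essentially vacuous: exhibiting $\mathcal{O}_X$ as a retract of $f_\ast\mathcal{O}_Y$ is (up to replacing the section) precisely what you are trying to prove. The lemma only helps if you allow $d$ to vary.

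The paper sidesteps this entirely by exploiting that freedom in $d$. Rather than attacking the splitting of $\mathcal{O}_X\to f_\ast\mathcal{O}_Y$ head-on, it shows that the full subcategory $\mathcal{T}\subseteq\operatorname{Perf}(X)$ of objects arising as direct summands of some $f_\ast Q$ (with $Q\in\operatorname{Perf}(Y)$) is thick, and contains a classical generator of $\operatorname{Perf}(X)$ by \Cref{lem:compact_gen_pushforward_finite_faithfully_flat}. Hence $\mathcal{T}=\operatorname{Perf}(X)$, so in particular $\mathcal{O}_X$ is a retract of $f_\ast Q$ for \emph{some} perfect $Q$---with no control over which $Q$, and no gluing required. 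Now \Cref{lem:section_retract_adjunction} does real work and yields the splitting of the unit at $\mathcal{O}_X$. This is the missing idea: rather than producing an explicit global retraction, one argues abstractly that $\mathcal{O}_X$ lies in the thick closure of pushforwards, and lets \Cref{lem:section_retract_adjunction} close the loop.
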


\begin{proof}
    We first show the case when $E$ is perfect.
    For this, define $\mathcal{T}$ to be the full subcategory of $\operatorname{Perf}(X)$ consisting of those objects which are direct summands of $f_\ast Q$ for some perfect complex $Q$ over $Y$. 
    The proof of \cite[Lemma A.1]{Hall:2016} shows that $\mathcal{T}$ is a thick subcategory.
    Consequently, $\mathcal{T}=\operatorname{Perf}(X)$ as the former contains a classical generator by \Cref{lem:compact_gen_pushforward_finite_faithfully_flat}.
    Thus, for every perfect complex $P$ over $X$, there exists a perfect complex $Q$ over $Y$ with $P$ a direct summand of $f_\ast Q$. 
    It follows from \Cref{lem:section_retract_adjunction} that the unit map $P\to f_\ast f^\ast P$ splits.

    Next, we show the claim for an arbitrary $E$ in $D_{\operatorname{qc}}(X)$. 
    The natural map $\mathcal{O}_X \to f_\ast \mathcal{O}_Y$ splits by the previous paragraph.
    Hence, by tensoring with $E$, we get the required splitting $E \to f_\ast f^\ast E$.
\end{proof}

\begin{proposition}\label{prop:faithfully_flat_proper_rouq_dim}
    Let $(X,\mathcal{A})$ be a Noether nc.\ scheme. If $f\colon Y \to X$ is a finite faithfully flat morphism, then $\dim D^b_{\operatorname{coh}}(\mathcal{A}) \leq \dim D^b_{\operatorname{coh}}(\mathcal{A}_Y)$.
\end{proposition}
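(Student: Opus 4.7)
The plan is to exhibit an explicit strong generator of $D^b_{\operatorname{coh}}(\mathcal{A})$ whose generation time is bounded by that of a strong generator of $D^b_{\operatorname{coh}}(\mathcal{A}_Y)$. Concretely, let $n := \dim D^b_{\operatorname{coh}}(\mathcal{A}_Y)$ and choose $G \in D^b_{\operatorname{coh}}(\mathcal{A}_Y)$ with $D^b_{\operatorname{coh}}(\mathcal{A}_Y) = \langle G \rangle_{n+1}$. I claim that $\mathbb{R}\breve{f}_\ast G$ is a strong generator for $D^b_{\operatorname{coh}}(\mathcal{A})$ with the same generation time, which immediately gives the desired inequality. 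Note that $\mathbb{R}\breve{f}_\ast G$ lies in $D^b_{\operatorname{coh}}(\mathcal{A})$ because $f$ is finite, hence affine and proper.

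Fix an arbitrary $E \in D^b_{\operatorname{coh}}(\mathcal{A})$. Since $f$ (and hence $\breve{f}$) is flat, the pullback $\mathbb{L}\breve{f}^\ast E$ lies in $D^b_{\operatorname{coh}}(\mathcal{A}_Y)$, so by our assumption on $G$ it sits in $\langle G \rangle_{n+1}$. Applying the triangulated functor $\mathbb{R}\breve{f}_\ast$ yields
\[
   \mathbb{R}\breve{f}_\ast \mathbb{L}\breve{f}^\ast E \in \langle \mathbb{R}\breve{f}_\ast G \rangle_{n+1}.
\]
It therefore suffices to show that $E$ is a direct summand of $\mathbb{R}\breve{f}_\ast \mathbb{L}\breve{f}^\ast E$ inside $D^b_{\operatorname{coh}}(\mathcal{A})$, so that thickness of $\langle \mathbb{R}\breve{f}_\ast G \rangle_{n+1}$ does the rest.

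For this I will compare the unit $\eta_E\colon E \to \mathbb{R}\breve{f}_\ast \mathbb{L}\breve{f}^\ast E$ with a splitting coming from the underlying scheme. By \Cref{prop:strict_form_nc_projection_formula}, applicable thanks to the flatness of $f$, there is a canonical isomorphism
\[
    \mathbb{R}f_\ast \mathcal{O}_Y \otimes^{\mathbb{L}}_{\mathcal{O}_X} E \;\xrightarrow{\sim}\; \mathbb{R}\breve{f}_\ast \mathbb{L}\breve{f}^\ast E
\]
under which $\eta_E$ is identified with the map obtained by tensoring the natural unit $\mathcal{O}_X \to \mathbb{R}f_\ast \mathcal{O}_Y = f_\ast \mathcal{O}_Y$ with $E$ via the tensor action \eqref{eq:tensor-action}. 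By \Cref{lem:counit_split_faithfully_flat_of_finite_type} applied to $E = \mathcal{O}_X$, the map $\mathcal{O}_X \to f_\ast \mathcal{O}_Y$ admits a retraction in $D_{\operatorname{qc}}(X)$; tensoring this retraction with $E$ yields a retraction of $\eta_E$ in $D_{\operatorname{qc}}(\mathcal{A})$. Hence $E$ is a direct summand of $\mathbb{R}\breve{f}_\ast \mathbb{L}\breve{f}^\ast E$, completing the proof.

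The main technical point is simply the compatibility of the unit of the $(\mathbb{L}\breve{f}^\ast, \mathbb{R}\breve{f}_\ast)$ adjunction with the tensor action under the projection formula identification; everything else is bookkeeping. In particular, I expect no obstruction beyond carefully checking this identification, since \Cref{lem:counit_split_faithfully_flat_of_finite_type} already supplies the crucial splitting at the level of the commutative base.
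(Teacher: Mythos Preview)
Your proof is correct and is essentially the same argument as the paper's. Both hinge on \Cref{lem:counit_split_faithfully_flat_of_finite_type} (the splitting of $\mathcal{O}_X \to f_\ast \mathcal{O}_Y$) together with the projection formula \Cref{prop:strict_form_nc_projection_formula} to see that every $E\in D^b_{\operatorname{coh}}(\mathcal{A})$ is a summand of $\mathbb{R}\breve{f}_\ast \mathbb{L}\breve{f}^\ast E$; the paper just phrases this as ``one can take $n=1$'' in \Cref{prop:nc_aoki} and \Cref{thm:nc_proper_descent}, whereas you unwind that step directly.
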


\begin{proof}
    By \Cref{lem:counit_split_faithfully_flat_of_finite_type}  the natural morphisn $\mathcal{O}_X\to f_\ast \mathcal{O}_Y$ splits.
    Consequently, $C=0$ in \Cref{prop:nc_aoki} and we can take $n=1$ in the statement.
    Therefore, we can also take $n=1$ in \Cref{thm:nc_proper_descent} yielding the claim.   
\end{proof}

\begin{corollary}\label{cor:rouq_dim_surjective_finite_etale}
    Let $(X,\mathcal{A})$ be a Noether nc.\ scheme. If $f\colon Y \to X$ is a finite \'{e}tale cover, then 
    \begin{align}
        \label{eq:rouq_dim_surjective_finite_etale1}    \dim  D^b_{\operatorname{coh}}(\mathcal{A}_Y) &= \dim D^b_{\operatorname{coh}}(\mathcal{A})\quad\text{and} \\ 
        \label{eq:rouq_dim_surjective_finite_etale2}    \dim_\oplus(D_{\operatorname{qc}}(\mathcal{A}_Y),D^b_{\operatorname{coh}}(\mathcal{A}_Y)) &= \dim_\oplus(D_{\operatorname{qc}}(\mathcal{A}), D^b_{\operatorname{coh}}(\mathcal{A})).
    \end{align}
    In particular, this is applicable to  $\operatorname{Spec}(\mathsf{L})\times_\mathsf{k} X\to X$ for any finite Galois extension $\mathsf{L}/\mathsf{k}$.
\end{corollary}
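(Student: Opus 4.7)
The plan is to establish each equality as a pair of opposite inequalities, exploiting two features of a finite \'{e}tale cover: it is finite faithfully flat, so \Cref{prop:faithfully_flat_proper_rouq_dim} and the splitting of \Cref{lem:counit_split_faithfully_flat_of_finite_type} apply, and it is separated \'{e}tale, so it has diagonal dimension one by \Cref{ex:diagdim_etale}.

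For the first equality, $\dim D^b_{\operatorname{coh}}(\mathcal{A}) \leq \dim D^b_{\operatorname{coh}}(\mathcal{A}_Y)$ is immediate from \Cref{prop:faithfully_flat_proper_rouq_dim}. For the reverse, I would run the computation in the proof of \Cref{prop:pullback_generation_along_flat_separated} with classical $\langle-\rangle$-brackets in place of $\overline{\langle-\rangle}$-brackets. Since $\Delta\colon Y \to Y \times_X Y$ is an open-and-closed immersion, $\Delta_\ast \mathcal{O}_Y$ is a direct summand of $\mathcal{O}_{Y \times_X Y} = \mathcal{O}_Y \boxtimes_X \mathcal{O}_Y$. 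Tensoring with $\mathbb{L}\breve{p}_1^\ast E$ for $E \in D^b_{\operatorname{coh}}(\mathcal{A}_Y)$, applying $\mathbb{R}\breve{p}_{2,\ast}$, and invoking the projection formula (\Cref{prop:strict_form_nc_projection_formula}) and flat base change (\Cref{prop:nc_base_change}) exactly as in that proof, reveals $E$ as a direct summand of $\mathbb{L}\breve{f}^\ast \mathbb{R}\breve{f}_\ast E$. Since $f$ is proper, $\mathbb{R}\breve{f}_\ast E \in D^b_{\operatorname{coh}}(\mathcal{A})$ by \Cref{lem:nc_bounded_above} (boundedness below follows since $f$ is finite, hence affine). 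Taking a strong generator $H$ of $D^b_{\operatorname{coh}}(\mathcal{A})$ with $D^b_{\operatorname{coh}}(\mathcal{A}) = \langle H \rangle_{m+1}$ gives $E \in \langle \mathbb{L}\breve{f}^\ast H \rangle_{m+1}$, so $\dim D^b_{\operatorname{coh}}(\mathcal{A}_Y) \leq m$.

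The second equality is handled analogously. The inequality $\dim_\oplus(D_{\operatorname{qc}}(\mathcal{A}_Y), D^b_{\operatorname{coh}}(\mathcal{A}_Y)) \leq \dim_\oplus(D_{\operatorname{qc}}(\mathcal{A}), D^b_{\operatorname{coh}}(\mathcal{A}))$ is immediate from the third bullet of \Cref{cor:separated_flat_diagonal_concrete_cases} with $\dim_\Delta(f, \operatorname{Perf}(X)) = 1$. For the reverse, I would mirror the proof of \Cref{prop:faithfully_flat_proper_rouq_dim}: the splitting of \Cref{lem:counit_split_faithfully_flat_of_finite_type} forces the object $C$ in the proof of \Cref{prop:nc_aoki} to vanish, so every $E \in D_{\operatorname{qc}}(\mathcal{A})$ is a direct summand of $\mathbb{R}f_\ast \mathcal{O}_Y \otimes^{\mathbb{L}}_{\mathcal{O}_X} E \simeq \mathbb{R}\breve{f}_\ast \mathbb{L}\breve{f}^\ast E$. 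Given a strong $\oplus$-generator $G \in D^b_{\operatorname{coh}}(\mathcal{A}_Y)$ of $D_{\operatorname{qc}}(\mathcal{A}_Y)$, its pushforward $\mathbb{R}\breve{f}_\ast G$ lies in $D^b_{\operatorname{coh}}(\mathcal{A})$ (finite pushforward is exact) and $\mathbb{R}\breve{f}_\ast$ preserves small coproducts (as $f$ is affine), yielding a strong $\oplus$-generator of $D_{\operatorname{qc}}(\mathcal{A})$ with the same bound. The \emph{in particular} claim then follows since $\operatorname{Spec}(\mathsf{L}) \to \operatorname{Spec}(\mathsf{k})$ is a finite \'{e}tale cover for finite Galois $\mathsf{L}/\mathsf{k}$, stable under arbitrary base change. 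The main care is in the reverse direction of the classical equality, where the $\oplus$-flavoured computations of \Cref{prop:pullback_generation_along_flat_separated} must be tracked with classical brackets in $D^b_{\operatorname{coh}}$; since each operation involved (tensoring with a summand of $\mathcal{O}_{Y\times_X Y}$, flat pullback, proper pushforward) preserves $D^b_{\operatorname{coh}}$ under our hypotheses, no substantive obstacle arises.
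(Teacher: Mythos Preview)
Your proof is correct and follows essentially the same approach as the paper: both use \Cref{prop:faithfully_flat_proper_rouq_dim} for one inequality in \eqref{eq:rouq_dim_surjective_finite_etale1}, the diagonal-dimension-one observation (\Cref{ex:diagdim_etale}) to exhibit each $E$ as a summand of $\mathbb{L}\breve{f}^\ast \mathbb{R}\breve{f}_\ast E$ for the reverse inequality and for one direction of \eqref{eq:rouq_dim_surjective_finite_etale2}, and the splitting of $\mathcal{O}_X \to f_\ast \mathcal{O}_Y$ from \Cref{lem:counit_split_faithfully_flat_of_finite_type} for the remaining inequality. The paper packages the summand statement as ``$\breve{f}^\ast$ is essentially dense'' and argues slightly more tersely, but the content is identical.
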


\begin{proof}
    Let us first show that both $\breve{f}^\ast D^b_{\operatorname{coh}}(\mathcal{A})\subseteq D^b_{\operatorname{coh}}( \mathcal{A}_Y)$ ($\breve{f}^\ast D^b_{\operatorname{coh}}(\mathcal{A})$ is contained in $D^b_{\operatorname{coh}}(\mathcal{A}_Y)$ as $f$ is flat) and $\breve{f}^\ast D_{\operatorname{qc}}(\mathcal{A})\subseteq D_{\operatorname{qc}}(\mathcal{A}_Y)$ are essentially dense. 
    Indeed, it follows from \Cref{ex:diagdim_etale} that $E$ is a direct summand of $\breve{f}^\ast \breve{f}_\ast E$ for all $E\in D_{\operatorname{qc}}(\mathcal{A}_Y)$. 
    (As $E$ is isomorphic to $\breve{p}_{2,\ast} (\Delta_\ast \mathcal{O}_Y \otimes^{\mathbb{L}} \breve{p}_1^\ast E)$ by \Cref{prop:strict_form_nc_projection_formula} applied to $\Delta$, and $\breve{f}^\ast \breve{f}_\ast E$ is isomorphic to $\breve{p}_{2,\ast} \breve{p}_1^\ast E$ by \Cref{prop:nc_base_change}, where the $p_i$ are the projections).
        
    The first paragraph shows one inequality of \eqref{eq:rouq_dim_surjective_finite_etale1}, whilst the other follows from \Cref{prop:faithfully_flat_proper_rouq_dim}.
    Similarly, one inequality of \eqref{eq:rouq_dim_surjective_finite_etale2} follows from the first paragraph.
    For the other, as the natural morphism $\mathcal{O}_X \to f_\ast \mathcal{O}_Y$ splits by \Cref{lem:counit_split_faithfully_flat_of_finite_type} tensoring with an object $E \in D_{\operatorname{qc}}(\mathcal{A})$ and using \Cref{prop:strict_form_nc_projection_formula} yields a splitting $E\to \breve{f}_\ast \breve{f}^\ast E$. 
    Thus, since $\breve{f}_\ast D^b_{\operatorname{coh}}(\mathcal{A}_Y)$ is contained in $D^b_{\operatorname{coh}}(\mathcal{A})$ as $f$ is finite, the inequality follows.
\end{proof}

\begin{example}
    Let $X$ be a variety over a (perfect) field $\mathsf{k}$. Suppose there is a finite Galois extension $\mathsf{L}/\mathsf{k}$ such that $X_\mathsf{L}:=\operatorname{Spec}(\mathsf{L})\times_\mathsf{k} X$ is a toric variety (e.g.\ $X$ is a Severi--Brauer variety). 
    These are sometimes called `arithmetic toric varieties', see e.g.\ \cite{ Elizondo/Lima-Filho/Sottile/Teitler:2014} (see also \cite{Voskresenskij/Klyachko:1985, Merkurjev/Panin:1997,Duncan:2016}).
    Consider coherent algebra $\mathcal{A}$ over $X$ of full support. 
    We claim that $\dim D^b_{\operatorname{coh}}(\mathcal{A})=\dim X$. 
    The inequality $\dim X\leq \dim D^b_{\operatorname{coh}}(\mathcal{A})$ holds generally by \cite[Theorem C]{Bhaduri/Dey/Lank:2023}. 
    For the other inequality, it suffices to prove the claim for the nc.\ scheme $(X_\mathsf{L},\mathcal{A}_{X_\mathsf{L}})$ by \Cref{cor:rouq_dim_surjective_finite_etale}, i.e.\ we may assume $X$ is toric. 
    Next pick a toric resolution of singularities $f\colon Y \to X$ (i.e.\ $f$ is a resolution of singularities and $Y$ is a smooth toric variety over $\mathsf{k}$),  see e.g.\ \cite[\S 8]{Danilov:1978} for the existence of such.
    As toric varieties have rational singularities (see e.g.\ \cite[Proposition 8.5.1]{Danilov:1978}), we have that the natural map $\mathcal{O}_X\to \mathbb{R}f_\ast \mathcal{O}_{Y}$ is an isomorphism.
    \Cref{cor:rouquier_bounds_from_derived_pushforward} yields $\dim D^b_{\operatorname{coh}}(\mathcal{A}_{Y}) \geq \dim D^b_{\operatorname{coh}}(\mathcal{A})$, so we may replace $X$ by $Y$ and thereby assume that the toric variety is smooth. 
    However, as the diagonal dimension of a smooth toric variety coincides with its Krull dimension \cite{Favero/Huang:2023, Hanlon/Hicks/Lazarev:2023, Brown/Erman:2024}\footnote{This result was proven around the same time, independently, by three different groups, and resolved a conjecture of Bondal.}, the desired claim follows by \Cref{cor:separated_flat_diagonal_concrete_cases}.
\end{example}

The last result in this section is a global version of \cite[Theorem 3.6]{Letz:2021} and a noncommutative variation of \cite[Theorem 1.7]{BILMP:2023}.
The proof is a straighforward addaptation of loc.\ cit.\ and relies on the global-to-local principle from \cite{Stevenson:2013}.
In the separated setting, however, it is possible to give an inductive proof (over an affine open cover) to reduce to affine situation, similar to how one shows Zariski descent.

\begin{theorem}\label{thm:local_global_stalk_generation} 
    Let $(X,\mathcal{A})$ be a Noether nc.\ scheme.
    Fix $E$, $G\in D^b_{\operatorname{coh}}(\mathcal{A})$.
    Suppose $E_x\in\langle G_x \rangle\subseteq D^b_{\operatorname{coh}}(\mathcal{A}_x)$ for all $x\in X$, then there exists a perfect complex $Q$ over $X$ such that $E\in \langle Q \otimes^{\mathbb{L}}_{\mathcal{O}_X} G \rangle\subseteq D^b_{\operatorname{coh}}(\mathcal{A})$. 
    In particular, we may choose $Q\in\operatorname{Perf}(X)$ any classical generator (independent of $E$).
\end{theorem}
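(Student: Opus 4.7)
The plan is to leverage the tensor action $(-\otimes^{\mathbb{L}}_{\mathcal{O}_X}-)$ from \eqref{eq:tensor-action} together with Stevenson's global-to-local principle \cite{Stevenson:2013}. By \Cref{lem:tensoring_perfect}, this action restricts to $\operatorname{Perf}(X) \times D^b_{\operatorname{coh}}(\mathcal{A}) \to D^b_{\operatorname{coh}}(\mathcal{A})$, endowing $D^b_{\operatorname{coh}}(\mathcal{A})$ with the structure of a $\operatorname{Perf}(X)$-module triangulated category over which Stevenson's machinery applies.

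First I would fix a classical generator $Q \in \operatorname{Perf}(X)$ (which exists since $X$ is Noetherian, hence quasi-compact and quasi-separated) and consider the thick subcategory $\mathcal{C} := \langle Q \otimes^{\mathbb{L}}_{\mathcal{O}_X} G \rangle \subseteq D^b_{\operatorname{coh}}(\mathcal{A})$. Since any $P \in \operatorname{Perf}(X)$ lies in $\langle Q \rangle$, the object $P \otimes^{\mathbb{L}}_{\mathcal{O}_X} G$ lies in $\mathcal{C}$. Hence $\mathcal{C}$ is stable under the $\operatorname{Perf}(X)$-action and is in fact the smallest thick $\operatorname{Perf}(X)$-submodule of $D^b_{\operatorname{coh}}(\mathcal{A})$ containing $G$.

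Next I would observe that stalk localization is compatible with the action, namely $(P \otimes^{\mathbb{L}} F)_x \simeq P_x \otimes^{\mathbb{L}}_{\mathcal{O}_{X,x}} F_x$, and that the stalk $Q_x$ classically generates $\operatorname{Perf}(\mathcal{O}_{X,x})$: indeed, $Q$ having full support (as a classical generator of $\operatorname{Perf}(X)$) forces $\mathcal{O}_{X,x} \in \langle Q_x \rangle$, and so $\langle Q_x \otimes^{\mathbb{L}} G_x \rangle = \langle G_x \rangle$. Consequently the image of $\mathcal{C}$ under localization at $x$ is precisely $\langle G_x \rangle$. Applying Stevenson's global-to-local principle to the $\operatorname{Perf}(X)$-module category $D^b_{\operatorname{coh}}(\mathcal{A})$, the stalkwise hypothesis $E_x \in \langle G_x \rangle$ for all $x \in X$ translates into $E \in \mathcal{C}$, which is the required conclusion.

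The hard part will be setting up Stevenson's principle in the correct form—checking that stalkwise membership genuinely detects membership in the thick $\operatorname{Perf}(X)$-submodule $\mathcal{C}$ rather than in a larger localizing closure, and handling the interplay between bounded coherent complexes and the compactly generated ambient category in which Stevenson's theory natively lives. An alternative route, noted after the statement, is to restrict to the separated setting and argue by induction along an affine open cover: the stalk hypothesis is stable under restriction to opens, so one reduces to the affine local situation treated by \cite[Theorem 3.6]{Letz:2021}, and the projection formula \Cref{prop:strict_form_nc_projection_formula} then transports classical generators of $\operatorname{Perf}$ compatibly with restriction so that the local statements can be glued.
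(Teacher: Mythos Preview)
Your proposal is correct and follows essentially the same approach as the paper: identify $\langle Q\otimes^{\mathbb{L}}_{\mathcal{O}_X}G\rangle$ as the smallest thick $\operatorname{Perf}(X)$-submodule containing $G$, then invoke Stevenson's local-to-global principle. The ``hard part'' you flag is exactly what the paper handles by passing to the compactly generated category $K(\operatorname{Inj}(\mathcal{A}))$ (with $D^b_{\operatorname{coh}}(\mathcal{A})$ as its compacts, via \cite{Krause:2005}) and completing the $\operatorname{Perf}(X)$-action to a $D_{\operatorname{qc}}(X)$-action there, so that Stevenson's localizing-submodule machinery applies and intersects back down to the thick submodule on compacts.
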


\begin{proof}
\label{rmk:local-to-global-argument}
    The argument is the same as that of \cite{BILMP:2023}, so we simply sketch it for convenience of the reader (but refer to loc.\ cit.\ for some notation).

    Firstly, note that $\langle Q \otimes^{\mathbb{L}}_{\mathcal{O}_X} G \rangle$ is exactly the smallest thick $\operatorname{Perf}(X)$-submodule of $D^b_{\operatorname{coh}}(\mathcal{A})$ containing $G$.

    Secondly, observe that $D^b_{\operatorname{coh}}(\mathcal{A})=K(\operatorname{Inj}(\mathcal{A}))^c$ \cite[Proposition 2.3]{Krause:2005}, where $K(\operatorname{Inj}(\mathcal{A}))$ denotes the homotopy  category of quasi-coherent injective $\mathcal{A}$-modules.
    
    Thirdly, it is possible to `complete' the action of $\operatorname{Perf}(X)$ on $D^b_{\operatorname{coh}}(\mathcal{A})$ to an action $\odot$ of $D_{\operatorname{qc}}(X)$ on $K(\operatorname{Inj}(\mathcal{A})$, see \cite[\S3]{Stevenson:2014}.

    The claim, then follows from the local-to-global principle for the latter action.  
    Indeed, if $E_x\in\langle G_x \rangle$ for all $x\in X$, then 
    \begin{displaymath}
        E\in \operatorname{Loc}^\odot(E)=\operatorname{Loc}^\odot\{\Gamma_x E  \mid x\in X\}\subseteq \operatorname{Loc}^\odot\{\Gamma_x G  \mid x\in X\}=\operatorname{Loc}^\odot(G),
    \end{displaymath}
    where the equalities come from the local-to-global principle \cite[Definition 6.1 \& Theorem 6.9]{Stevenson:2013}.
    Thus $E$ is in the smallest $\operatorname{Perf}(X)$-module of $D^b_{\operatorname{coh}}(\mathcal{A})$ containing $G$, i.e.\ in $\langle Q \otimes^{\mathbb{L}}_{\mathcal{O}_X} G \rangle$.
\end{proof}

\begin{remark}
    The appearance of the perfect complex $Q$ in  \Cref{thm:local_global_stalk_generation} is subtle, necessary, and not every object with full support would be an appropriate choice. 
    In the affine setting, any perfect complex with full support is a classical generator for the category of perfect complexes \cite{Neeman:1992}, and moreover one can simply take $Q=\mathcal{O}_X$ in \Cref{thm:local_global_stalk_generation}.
    However, this can horribly fail in the non-affine setting; e.g.\ $\mathcal{O}_{\mathbb{P}^1_{\mathsf{k}}}$ is not a classical generator for $D^b_{\operatorname{coh}}(\mathbb{P}^1_{\mathsf{k}})$ (with $\mathsf{k}$ some field), so the addition of the $Q$ is really necessary.
\end{remark}

\section{Applications to Azumaya algebras}
\label{sec:application_azumaya}

We now bring our focus to Azumaya algebras.
Their properties when it comes to generation are tightly intertwined with those of their center. 

\begin{theorem}\label{thm:azumaya_strong_generator_iff_center}
    Let $(X,\mathcal{A})$ be a separated Azumaya scheme. 
    Then $D_{\operatorname{qc}}(\mathcal{A})$ has finite $\oplus$-dimension with respect to the bounded coherent complexes if and only if $D_{\operatorname{qc}}(X)$ has.
    Moreover, assuming $\mathcal{A}$ can be split by a finite \'{e}tale morphism, we have $\dim D^b_{\operatorname{coh}}(\mathcal{A})=\dim D^b_{\operatorname{coh}}(X)$.
\end{theorem}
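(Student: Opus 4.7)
The plan is to leverage the defining property of Azumaya algebras---being \'etale-locally isomorphic to matrix algebras---together with Morita equivalence to reduce the two statements to the previously established \'etale descent theorems for the underlying scheme. Concretely, pick an \'etale covering $\{f_i\colon X_i \to X\}_i$ that trivializes $\mathcal{A}$, so that $f_i^\ast \mathcal{A} \cong \mathcal{E}nd_{\mathcal{O}_{X_i}}(\mathcal{O}_{X_i}^{n_i})$ for some positive integers $n_i$. Standard Morita theory, applied in our relative context, yields triangulated equivalences $D_{\operatorname{qc}}(f_i^\ast\mathcal{A})\simeq D_{\operatorname{qc}}(X_i)$ that restrict to equivalences on bounded coherent subcategories. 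Moreover, because these equivalences are given by tensoring with a finite locally free sheaf, they are compatible with the natural $D_{\operatorname{qc}}(X_i)$-actions, so that bounded coherent strong $\oplus$-generators correspond to bounded coherent strong $\oplus$-generators.

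For the first claim, I would chain two applications of \Cref{thm:etale_cover_strong_oplus_bounded_coherent_exist}. Applied to the covering $\{f_i\}_i$ of the nc.\ scheme $(X,\mathcal{A})$, it says $D_{\operatorname{qc}}(\mathcal{A})$ admits a bounded coherent strong $\oplus$-generator if and only if each $D_{\operatorname{qc}}(f_i^\ast\mathcal{A})$ does; by the Morita equivalence above, this is equivalent to each $D_{\operatorname{qc}}(X_i)$ admitting such a generator, which by the same theorem applied to the scheme side (i.e.\ with $\mathcal{A}=\mathcal{O}_X$) is equivalent to $D_{\operatorname{qc}}(X)$ admitting one.

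For the second claim, let $g\colon Y\to X$ be the assumed finite \'etale splitting morphism, so $g^\ast\mathcal{A}\cong\mathcal{E}nd_{\mathcal{O}_Y}(\mathcal{O}_Y^n)$. Then chaining \Cref{cor:rouq_dim_surjective_finite_etale} applied to $g\colon (Y,g^\ast\mathcal{A})\to(X,\mathcal{A})$, the Morita equivalence $D^b_{\operatorname{coh}}(g^\ast\mathcal{A})\simeq D^b_{\operatorname{coh}}(Y)$, and \Cref{cor:rouq_dim_surjective_finite_etale} applied to $g$ as a morphism of schemes, one obtains
\[
    \dim D^b_{\operatorname{coh}}(\mathcal{A}) = \dim D^b_{\operatorname{coh}}(g^\ast\mathcal{A}) = \dim D^b_{\operatorname{coh}}(Y) = \dim D^b_{\operatorname{coh}}(X).
\]
The main technical point to verify is that the Morita equivalence passes cleanly to the relevant derived categories---in particular, that $D_{\operatorname{qc}}(X_i)\simeq D_{\operatorname{qc}}(f_i^\ast\mathcal{A})$ restricts to the bounded coherent subcategories and respects the tensor action of $D_{\operatorname{qc}}(X_i)$---so that the conditions on generation and Rouquier dimension transfer verbatim. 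Once this bookkeeping is in hand, the theorem is a direct consequence of \Cref{thm:etale_cover_strong_oplus_bounded_coherent_exist} and \Cref{cor:rouq_dim_surjective_finite_etale}.
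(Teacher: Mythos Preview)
Your proposal is correct and follows essentially the same approach as the paper: reduce to the split case via Morita theory, and bridge to it by applying \Cref{thm:etale_cover_strong_oplus_bounded_coherent_exist} (for the first claim) and \Cref{cor:rouq_dim_surjective_finite_etale} (for the second) on both the noncommutative and commutative sides. The paper's proof is terser but amounts to exactly the chain of implications you spell out; the compatibility with the tensor action you mention is in fact not needed, since a triangulated equivalence preserving the bounded coherent subcategory automatically carries bounded coherent strong $\oplus$-generators to such.
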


\begin{proof}
    Assume $\mathcal{A}$ is split, i.e.\ the endomorphism sheaf of a finite locally free module (or even a finite free module if one prefers).
    Then, by Morita theory, $\operatorname{Qcoh}(\mathcal{A})\cong\operatorname{Qcoh}(X)$ (and this restricts to coherent modules).
    Consequently, noting that by definition every Azumaya algebra is \'{e}tale locally split, using \Cref{thm:etale_cover_strong_oplus_bounded_coherent_exist} and \Cref{cor:rouq_dim_surjective_finite_etale} we can reduce to the split setting, showing both claims.
\end{proof}

This has some pleasant consequences.
The following is \cite[Main Theorem]{Aoki:2021} for Azumaya algebras.

\begin{corollary}\label{cor:nc_aoki}
    Let $(X,\mathcal{A})$ be an Azumaya scheme whose underlying scheme $X$ is a quasi-compact separated quasi-excellent scheme of finite Krull dimension.
    Then $D^b_{\operatorname{coh}}(\mathcal{A})$ admits a strong generator.
\end{corollary}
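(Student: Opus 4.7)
The plan is to reduce the claim to Aoki's commutative theorem via our descent result for Azumaya algebras, \Cref{thm:azumaya_strong_generator_iff_center}, and then pass from the big category to the small one. The key observation is that Aoki's main theorem, although stated for $D^b_{\operatorname{coh}}$, actually proceeds by first constructing a strong $\oplus$-generator of $D_{\operatorname{qc}}$ that lies in $D^b_{\operatorname{coh}}$; it is this intermediate output that matches the input required by \Cref{thm:azumaya_strong_generator_iff_center}.

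Concretely, I would first apply Aoki's argument to $X$: since $X$ is quasi-compact, separated, quasi-excellent and of finite Krull dimension, $D_{\operatorname{qc}}(X)$ admits a strong $\oplus$-generator with bounded and coherent cohomology. By \Cref{thm:azumaya_strong_generator_iff_center}, this transfers directly to the Azumaya scheme, producing a strong $\oplus$-generator $G\in D^b_{\operatorname{coh}}(\mathcal{A})$ of $D_{\operatorname{qc}}(\mathcal{A})$. Note that separatedness (required by \Cref{thm:azumaya_strong_generator_iff_center}) is built into the hypothesis.

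It then remains to promote $G$ to a strong generator of the small category $D^b_{\operatorname{coh}}(\mathcal{A})$. For this I would invoke the general principle emphasised in the introduction and discussed in \Cref{sec:big_vs_small_elephant}, namely that a strong $\oplus$-generator of $D_{\operatorname{qc}}(\mathcal{A})$ that happens to lie in $D^b_{\operatorname{coh}}(\mathcal{A})$ automatically restricts to a strong generator of $D^b_{\operatorname{coh}}(\mathcal{A})$. The main obstacle is precisely this last step: unlike the descent part, which is clean, it rests on a Neeman-style approximation argument (using the identification $D^b_{\operatorname{coh}}(\mathcal{A})\cong K(\operatorname{Inj}(\mathcal{A}))^c$ from \cite{Krause:2005}, already exploited in the proof of \Cref{thm:local_global_stalk_generation}) to convert arbitrary small coproducts appearing in a $G$\nobreakdash-build of an object of $D^b_{\operatorname{coh}}(\mathcal{A})$ into finite ones, possibly at the cost of a controlled increase in the number of cones. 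With this general fact taken as a black box, the corollary follows in two lines.
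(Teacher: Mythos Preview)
Your proposal is correct and follows essentially the same route as the paper: invoke Aoki's argument to obtain a strong $\oplus$-generator of $D_{\operatorname{qc}}(X)$ lying in $D^b_{\operatorname{coh}}(X)$, transfer it to $D_{\operatorname{qc}}(\mathcal{A})$ via \Cref{thm:azumaya_strong_generator_iff_center}, and then pass from big to small. The paper's proof is the same two lines, only terser.

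One comment: you overstate the difficulty of the final step. The implication ``strong $\oplus$-generator in $D^b_{\operatorname{coh}}$ $\Rightarrow$ strong generator of $D^b_{\operatorname{coh}}$'' is the easy direction, and the paper treats it as essentially immediate (it is exactly the ``if'' direction singled out in \Cref{sec:big_vs_small_elephant}, with reference to \cite[Lemma~2.14]{DeDeyn/Lank/ManaliRahul:2024}). You do not need to route through $K(\operatorname{Inj}(\mathcal{A}))$ or approximation theory for this; the standard argument already gives it, and in fact with no increase in the number of cones. Your instinct to worry about the \emph{converse} (small $\Rightarrow$ big) would be justified---that is the genuinely subtle direction discussed in \Cref{sec:big_vs_small_elephant}---but it is not what is needed here.
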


\begin{proof}
    Note that the proof of \cite[Main Theorem]{Aoki:2021} proceeds by constructing a strong $\oplus$-generator for $D_{\operatorname{qc}}(X)$ with bounded and coherent cohomology, which immediately gives us the required result by \Cref{thm:azumaya_strong_generator_iff_center}.
\end{proof}

Our next result shows how to identify strong generators for Azumaya algebras by lifting one from its center.
In particular this allows us to apply \cite[Theorem A]{BILMP:2023}, which allows us to explicitly identify strong generators for nc.\ schemes of prime characteristic.
We start with a lemma.

\begin{lemma}\label{lem:generate_by_perfect_pushforwards}
    Let $(X,\mathcal{A})$ be an Azumaya scheme. Suppose $Z$ is a closed subset of $X$. If $P$ is an object of $\operatorname{Perf}_Z (\mathcal{A})$ for which $\operatorname{supp}(P)=Z$, then $D^b_{\operatorname{coh},Z}(\mathcal{A})$  coincides with the thick subcategory generated by $P$ and objects of the form $\Breve{i}_\ast \mathbb{L}\Breve{i}^\ast P$, where $i \colon Y \to X$ is a closed immersion from an integral scheme $Y$ contained in $Z$.
\end{lemma}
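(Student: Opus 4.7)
Let $\mathcal{T}$ denote the thick subcategory of $D^b_{\operatorname{coh},Z}(\mathcal{A})$ on the right-hand side of the claimed equality; the inclusion $\mathcal{T}\subseteq D^b_{\operatorname{coh},Z}(\mathcal{A})$ is clear, and the plan is to establish the reverse by Noetherian induction on the support. By the standard truncation dévissage it suffices to show that every coherent $\mathcal{A}$-module $M$ with $\operatorname{supp}(M)\subseteq Z$ belongs to $\mathcal{T}$, and we proceed by Noetherian induction on $W:=\operatorname{supp}(M)$, the case $W=\varnothing$ being vacuous.

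For the inductive step, pick an irreducible component $Y_0$ of $W$, endow it with its reduced structure, and let $i\colon Y\hookrightarrow X$ be the resulting integral closed immersion; note $Y\subseteq W\subseteq Z$. Let $\mathcal{I}_Y$ be the ideal sheaf of $Y$. At the generic point $\eta$ of $Y$, the stalk $M_\eta$ is a finitely generated $\mathcal{O}_{X,\eta}$-module supported only at $\eta$ in $\operatorname{Spec}\mathcal{O}_{X,\eta}$ (since $\eta$ is generic in $W$), so $\mathfrak{m}_\eta^n M_\eta=0$ for $n$ large. Thus $\operatorname{supp}(\mathcal{I}_Y^n M)\subsetneq W$, placing $\mathcal{I}_Y^n M\in\mathcal{T}$ by induction, while $M/\mathcal{I}_Y^n M$ admits a finite filtration whose subquotients $\mathcal{I}_Y^k M/\mathcal{I}_Y^{k+1}M\cong\Breve{i}_\ast N_k$ for some $N_k\in\operatorname{coh}(\mathcal{A}_Y)$. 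The problem reduces to showing $\Breve{i}_\ast N\in\mathcal{T}$ for $N\in\operatorname{coh}(\mathcal{A}_Y)$. If $\operatorname{supp}(\Breve{i}_\ast N)\subsetneq W$ the induction closes the case, so it remains to address $W=Y$ integral with $N$ of full support on $Y$.

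Here the Azumaya hypothesis is essential: the fibre $\mathcal{A}_Y\otimes_{\mathcal{O}_{Y,\eta}}k(\eta)$ is a central simple algebra over $k(\eta)$, hence semisimple with a unique simple module $S$ generating its bounded derived module category as a thick subcategory. Since $\eta\in Z=\operatorname{supp}(P)$ and $P$ is perfect, derived Nakayama forces $(\mathbb{L}\Breve{i}^\ast P)_\eta\neq 0$; writing this as $\bigoplus_j S[-j]^{\oplus r_j}$ with some $r_j>0$ exhibits its thick closure at the generic stalk as containing $N_\eta\cong S^{\oplus m}$. Such thick-closure membership is witnessed by finitely many maps and cones, all of which are defined over some nonempty open $U\ni\eta$ in $Y$, giving $N|_U\in\langle(\mathbb{L}\Breve{i}^\ast P)|_U\rangle\subseteq D^b_{\operatorname{coh}}(\mathcal{A}_U)$. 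As the restriction functor $D^b_{\operatorname{coh}}(\mathcal{A}_Y)\to D^b_{\operatorname{coh}}(\mathcal{A}_U)$ is a Verdier localization with kernel $D^b_{\operatorname{coh},Y\setminus U}(\mathcal{A}_Y)$, this lifts to a triangle
\[
C\to N\to E\to C[1]
\]
in $D^b_{\operatorname{coh}}(\mathcal{A}_Y)$ with $E\in\langle\mathbb{L}\Breve{i}^\ast P\rangle$ and $\operatorname{supp}(C)\subseteq Y\setminus U\subsetneq Y$. Pushing forward by the exact $\Breve{i}_\ast$ yields $\Breve{i}_\ast E\in\langle\Breve{i}_\ast\mathbb{L}\Breve{i}^\ast P\rangle\subseteq\mathcal{T}$, while $\operatorname{supp}(\Breve{i}_\ast C)\subsetneq W$ puts $\Breve{i}_\ast C\in\mathcal{T}$ by the outer induction, so the triangle forces $\Breve{i}_\ast N\in\mathcal{T}$.

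The hardest step should be the generic-to-global upgrade in the `full support' case: promoting the stalk-level CSA decomposition to an open neighbourhood, and then to a global triangle via the Verdier localization, requires some care about how thick-generation interacts with restriction to opens (and compatible extension through the Verdier kernel). The rest of the argument is a fairly routine Noetherian induction, but the Azumaya input is precisely what allows the `full support' case to be handled, as it does not yield to a naive support-decrease argument.
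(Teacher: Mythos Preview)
Your approach is essentially the one the paper has in mind: the paper simply defers to the d\'evissage of \cite[Theorem~A]{Dey/Lank:2024}, noting that the Azumaya hypothesis is needed precisely so that $\mathcal{A}_Y$ is generically semisimple for each integral closed $Y\subseteq Z$, which is exactly the point you isolate and exploit.

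There is, however, one imprecision in your ``generic-to-global'' step. From $N|_U\in\langle Q|_U\rangle$ (with $Q:=\mathbb{L}\Breve{i}^\ast P$) the Verdier localization does \emph{not} directly yield a single triangle $C\to N\to E\to C[1]$ with $E\in\langle Q\rangle$ and $C\in\mathcal{K}:=D^b_{\operatorname{coh},Y\setminus U}(\mathcal{A}_Y)$: morphisms in the quotient are roofs, and the intermediate object of a roof witnessing $N|_U\cong(\text{summand of }Q[j]^{\oplus m})|_U$ need not lie in $\langle Q\rangle$. What the localization \emph{does} give, via the bijection between thick subcategories of $D^b_{\operatorname{coh}}(\mathcal{A}_Y)$ containing $\mathcal{K}$ and thick subcategories of $D^b_{\operatorname{coh}}(\mathcal{A}_U)$, is that the preimage of $\langle Q|_U\rangle$ equals $\langle Q,\mathcal{K}\rangle$, so $N\in\langle Q,\mathcal{K}\rangle$. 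This is all you need: applying $\Breve{i}_\ast$ gives $\Breve{i}_\ast N\in\langle\Breve{i}_\ast Q,\Breve{i}_\ast\mathcal{K}\rangle$, and both $\Breve{i}_\ast Q=\Breve{i}_\ast\mathbb{L}\Breve{i}^\ast P$ and (by the outer Noetherian induction, since everything in $\Breve{i}_\ast\mathcal{K}$ has support strictly contained in $W$) $\Breve{i}_\ast\mathcal{K}$ lie in $\mathcal{T}$. With this small correction the argument goes through.
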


\begin{proof}
    This follows verbatim to \cite[Theorem A]{Dey/Lank:2024}, as the required lemmas in ibid.\ also have the appropriate analogues in this setting---it is worthwhile noting that the `Azumaya' assumption is needed as one needs the pullback of $\mathcal{A}$ under closed immersions to be generically semisimple for the proofs of ibid.\ to go through.
\end{proof}

\begin{proposition}\label{prop:lifting_strong_gen_from_centre_for_azumaya}
    Let $(X,\mathcal{A})$ be an Azumaya scheme and $Z$ be a closed subset of $X$.
    Suppose $G$ is a classical generator for $D^b_{\operatorname{coh},Z}(X)$, then $\pi^\ast G=G\otimes_{\mathcal{O}_X} \mathcal{A}$ is a classical generator for $D^b_{\operatorname{coh},Z}(\mathcal{A})$.
\end{proposition}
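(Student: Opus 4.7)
The plan is to invoke \Cref{lem:generate_by_perfect_pushforwards} twice: once over the center to produce a convenient perfect complex, and once over $\mathcal{A}$ to reduce the proposition to two explicit containments, after which everything falls out from the nc.\ projection formula of \Cref{prop:strict_form_nc_projection_formula}.

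First I would apply \Cref{lem:generate_by_perfect_pushforwards} to the trivial Azumaya algebra $\mathcal{O}_X$ to obtain a perfect complex $P_0 \in \operatorname{Perf}_Z(X)$ with $\operatorname{supp}(P_0) = Z$, and set $P := \pi^\ast P_0 = P_0 \otimes_{\mathcal{O}_X} \mathcal{A}$. As $\mathcal{A}$ is locally free over $\mathcal{O}_X$ (being Azumaya), $P$ is perfect over $\mathcal{A}$; and since $\mathcal{A}$ is faithful over $\mathcal{O}_X$, the support of $P$ remains equal to $Z$. A second application of \Cref{lem:generate_by_perfect_pushforwards}, this time to $(X,\mathcal{A})$ with this specific $P$, now reduces the proposition to showing that the thick subcategory generated by $\pi^\ast G$ contains (a) the object $P$ itself, and (b) every $\breve{i}_\ast \mathbb{L}\breve{i}^\ast P$ for $i\colon Y \hookrightarrow X$ a closed immersion from an integral subscheme of $Z$.

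For (a), the exact functor $(-)\otimes_{\mathcal{O}_X}\mathcal{A}$ sends $\langle G \rangle \subseteq D^b_{\operatorname{coh},Z}(X)$ into $\langle \pi^\ast G \rangle \subseteq D^b_{\operatorname{coh},Z}(\mathcal{A})$, and since $P_0 \in \langle G \rangle$ by the hypothesis on $G$, we obtain $P = P_0 \otimes_{\mathcal{O}_X} \mathcal{A} \in \langle \pi^\ast G \rangle$. For (b), I would combine the projection formula of \Cref{prop:strict_form_nc_projection_formula} (applicable since $\mathcal{A}$ is flat over $X$) with associativity of the tensor action and the usual (commutative) projection formula, so as to compute
\[
\breve{i}_\ast \mathbb{L}\breve{i}^\ast P \;\cong\; \mathbb{R}i_\ast \mathcal{O}_Y \otimes_{\mathcal{O}_X}^{\mathbb{L}} P \;\cong\; \bigl(\mathbb{R}i_\ast \mathcal{O}_Y \otimes_{\mathcal{O}_X}^{\mathbb{L}} P_0 \bigr) \otimes_{\mathcal{O}_X} \mathcal{A} \;\cong\; \mathbb{R}i_\ast \mathbb{L}i^\ast P_0 \otimes_{\mathcal{O}_X} \mathcal{A}.
\]
As $\mathbb{R}i_\ast \mathbb{L}i^\ast P_0 \in D^b_{\operatorname{coh},Z}(X) = \langle G \rangle$, tensoring with $\mathcal{A}$ once more lands it inside $\langle \pi^\ast G \rangle$, settling (b) and hence the proposition.

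The main obstacle is really only bookkeeping---making sure the flatness of $\mathcal{A}$ over $X$ is invoked correctly to deploy the strict projection formula, and tracking the associativity between the $D(X)$-action on $D(\mathcal{A})$ and the internal tensor product on $D(X)$. Both, however, are precisely what the framework of \Cref{sec:lifting} was set up to handle, so no genuinely new ingredients seem required beyond \Cref{lem:generate_by_perfect_pushforwards} and \Cref{prop:strict_form_nc_projection_formula}.
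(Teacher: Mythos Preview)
Your approach is correct and essentially the same as the paper's: apply \Cref{lem:generate_by_perfect_pushforwards} to $(X,\mathcal{A})$, identify the resulting generators via the projection formula as objects of the form $\pi^\ast(-)$ with $(-)\in D^b_{\operatorname{coh},Z}(X)=\langle G\rangle$, and conclude. The only difference is cosmetic: the paper simply takes $P=\mathcal{A}$ (so the displayed generators are $\breve{i}_\ast\mathbb{L}\breve{i}^\ast\mathcal{A}\cong i_\ast\mathcal{O}_Y\otimes_{\mathcal{O}_X}\mathcal{A}=\pi^\ast i_\ast\mathcal{O}_Y$), and then invokes the lemma a second time on $(X,\mathcal{O}_X)$ to recognise the $i_\ast\mathcal{O}_Y$ as a generating set for $D^b_{\operatorname{coh},Z}(X)$; your choice $P=P_0\otimes_{\mathcal{O}_X}\mathcal{A}$ with $P_0\in\operatorname{Perf}_Z(X)$ is a touch more careful for general $Z$ and sidesteps the second invocation, landing directly in $\langle G\rangle$ by hypothesis. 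One minor phrasing point: \Cref{lem:generate_by_perfect_pushforwards} does not \emph{produce} $P_0$ for you---it presupposes such an object---so your first ``application'' is really just citing the (standard) existence of a perfect complex with support exactly $Z$.
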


\begin{proof}
    As usual, let $\pi\colon(X,\mathcal{A})\to X$ denote the structure morphism.
    Applying \Cref{lem:generate_by_perfect_pushforwards} with $P=\mathcal{A}$, we see that $D^b_{\operatorname{coh},Z}(\mathcal{A})$ is generated by 
    \begin{displaymath}
        \Breve{i}_\ast \mathbb{L}\Breve{i}^\ast \mathcal{A} \cong i_\ast \mathcal{O}_Y \otimes \mathcal{A} = \pi^\ast \mathcal{O}_Y
    \end{displaymath}
    for $i\colon Y\to X$ closed integral subschemes contained in $Z$.
    However, applying the lemma again, this time to the Azumaya scheme $(X,\mathcal{O}_X)$ we see that
    \begin{displaymath}
    D^b_{\operatorname{coh},Z}(X)= \langle i_\ast \mathcal{O}_Y\mid \text{$i\colon Y\to X$ closed integral subscheme contained in }Z \rangle.
    \end{displaymath}
    By assumption this also equals $\langle G\rangle$.
    Hence, it follows that
    \begin{displaymath}
        \langle \pi^\ast G\rangle=D^b_{\operatorname{coh}}(\mathcal{A})
    \end{displaymath}
    which shows our claim.
\end{proof}

\begin{remark}
    Note that a `strong generation' version of \Cref{prop:lifting_strong_gen_from_centre_for_azumaya} (for $\varnothing\neq Z\neq X$) is possibly vacuous.
    Indeed, for an affine Noetherian integral scheme $X$, it was shown by Elagin--Lunts \cite[Theorem 3.1]{Elagin/Lunts:2018} that any thick subcategory of $D^b_{\operatorname{coh},Z}(X)$ admitting a strong generator is either the trivial subcategory consisting of only zero objects or coincides with $D^b_{\operatorname{coh}}(X)$.   
\end{remark}

\begin{example}\label{ex:nc_Frobenius}
    Let $(X,\mathcal{A})$ be a Azumaya scheme where $X$ is $F$-finite of characteristic $p$.
    Here \textbf{$F$-finite} means that the $i$-th iterate of the Frobenius morphism $F^i \colon X \to X$ is a finite morphism for some integer $i$ (equivalently, for all $i$).    
    Suppose $Q$ is a classical generator for $\operatorname{Perf}(X)$ and $e$ is large enough (this can be made precise, see \cite[Theorem A]{BILMP:2023}), then $F_\ast^e Q \otimes_{\mathcal{O}_X} \mathcal{A}$ is a classical generator for $D^b_{\operatorname{coh}}(\mathcal{A})$.
    Indeed, it follows form loc.\ cit.\ that $F_\ast^e Q$ is a classical generator for $D^b_{\operatorname{coh}}(X)$, so this follows immediately from \Cref{prop:lifting_strong_gen_from_centre_for_azumaya}. Additionally, if $X$ is separated, then $F_\ast^e Q \otimes_{\mathcal{O}_X} \mathcal{A}$ is a strong generator for $D^b_{\operatorname{coh}}(\mathcal{A})$ as the latter admits a strong generator by \Cref{cor:nc_aoki}---an $F$-finite scheme is quasi-excellent of finite Krull dimension, see e.g.\ \cite[Proposition 1.1 \& Theorem 2.5]{Kunz:1976}.
\end{example}

As some context for our last result, recall that a conjecture by Orlov \cite{Orlov:2009} states that the Rouquier dimension of the bounded derived category of coherent sheaves on a smooth variety $X$ equals the Krull dimension of $X$.
A weaker version of this problem, replacing Rouquier dimension by  \textit{countable Rouquier dimension}, has been solved by Olander in \cite{Olander:2023}. 
The countable Rouquier dimension is defined exactly as the Rouquier dimension, but allowing a countable number of objects to generate instead of looking at a single generator.
We finish this section by extending \cite[Theorem 4]{Olander:2023} to Azumaya algebras over derived splinters.
Recall that a Noetherian scheme $X$ is said to be a \textbf{derived splinter} if for all proper surjective morphisms $f \colon Y \to X$ the natural morphism $\mathcal{O}_X \to \mathbb{R}f_\ast \mathcal{O}_Y$ splits. 
In characteristic zero being a derived splinter is equivalent to having {rational singularities}; see \cite{Kovacs:2000, Bhatt:2012} for details.

\begin{proposition}\label{prop:countable_rouq_dim_derived_splinter}
    Let $(X,\mathcal{A})$ be a separated Azumaya scheme and assume that $X$ admits a resolution of singularities and has finite Krull dimension. 
    If $X$ is a derived splinter, then the countable Rouquier dimension of $D^b_{\operatorname{coh}}(\mathcal{A})$ is at most the Krull dimension of $X$.
\end{proposition}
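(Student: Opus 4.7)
The plan is to follow the strategy of Olander's proof, reducing the problem to the smooth case via a resolution of singularities and the derived splinter hypothesis, and then lifting a commutative countable generating set up to the Azumaya algebra via the dévissage underlying \Cref{lem:generate_by_perfect_pushforwards}. First, let $g\colon \widetilde{X}\to X$ be a resolution of singularities, so that $\widetilde{X}$ is smooth (in particular regular of finite Krull dimension equal to $\dim X$). Since $X$ is a derived splinter, the canonical map $\mathcal{O}_X\to \mathbb{R}g_\ast \mathcal{O}_{\widetilde{X}}$ admits a retraction in $D_{\operatorname{qc}}(X)$. As $\mathcal{A}$ is Azumaya, hence flat, the projection formula \Cref{prop:strict_form_nc_projection_formula} gives
\[
\mathbb{R}\breve{g}_\ast \mathbb{L}\breve{g}^\ast E\cong \mathbb{R}g_\ast \mathcal{O}_{\widetilde{X}}\otimes^{\mathbb{L}}_{\mathcal{O}_X} E,
\]
so tensoring the retraction with $E\in D^b_{\operatorname{coh}}(\mathcal{A})$ presents $E$ as a direct summand of $\mathbb{R}\breve{g}_\ast \mathbb{L}\breve{g}^\ast E$. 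Regularity of $\widetilde{X}$ ensures $\mathbb{L}\breve{g}^\ast E\in D^b_{\operatorname{coh}}(\mathcal{A}_{\widetilde{X}})$, and $\mathbb{R}\breve{g}_\ast$ preserves $D^b_{\operatorname{coh}}$ by \Cref{lem:nc_bounded_above}. Hence it suffices to exhibit a countable family $\{G_i\}_{i\in\mathbb{N}}\subseteq D^b_{\operatorname{coh}}(\mathcal{A}_{\widetilde{X}})$ with $D^b_{\operatorname{coh}}(\mathcal{A}_{\widetilde{X}})\subseteq \langle\{G_i\}\rangle_{d+1}$ for some $d\leq \dim \widetilde{X}$: pushing forward by $\mathbb{R}\breve{g}_\ast$ then yields the desired countable witness for $D^b_{\operatorname{coh}}(\mathcal{A})$.

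For the smooth Azumaya scheme $(\widetilde{X},\mathcal{A}_{\widetilde{X}})$, apply \cite[Theorem 4]{Olander:2023} to the commutative smooth scheme $\widetilde{X}$ to obtain a countable family $\{H_j\}_{j\in\mathbb{N}}\subseteq D^b_{\operatorname{coh}}(\widetilde{X})$ with $D^b_{\operatorname{coh}}(\widetilde{X})\subseteq \langle\{H_j\}\rangle_{d+1}$ for $d=\dim \widetilde{X}$, and take $G_j := H_j\otimes^{\mathbb{L}}_{\mathcal{O}_{\widetilde{X}}}\mathcal{A}_{\widetilde{X}}$. The dévissage of \Cref{lem:generate_by_perfect_pushforwards} (applied with $P=\mathcal{A}_{\widetilde{X}}$) reduces the generation of an arbitrary object of $D^b_{\operatorname{coh}}(\mathcal{A}_{\widetilde{X}})$ to building objects of the form $i_\ast\mathcal{O}_Z\otimes_{\mathcal{O}_{\widetilde{X}}}\mathcal{A}_{\widetilde{X}}$ for closed integral immersions $i\colon Z\hookrightarrow \widetilde{X}$. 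Each $i_\ast\mathcal{O}_Z$ lies in $\langle\{H_j\}\rangle_{d+1}$ by Olander, and tensoring with the flat Azumaya sheaf $\mathcal{A}_{\widetilde{X}}$ preserves cone counts, placing each $i_\ast\mathcal{O}_Z\otimes \mathcal{A}_{\widetilde{X}}$ inside $\langle\{G_j\}\rangle_{d+1}$.

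The main obstacle is converting this stratum-by-stratum control into a single uniform $(d+1)$-cone bound for all of $D^b_{\operatorname{coh}}(\mathcal{A}_{\widetilde{X}})$, since \Cref{lem:generate_by_perfect_pushforwards} and \Cref{prop:lifting_strong_gen_from_centre_for_azumaya} are phrased only at the level of classical generation and a priori the dévissage could inflate the number of cones. To overcome this, one natural route is to invoke the stalk-local-to-global principle of \Cref{thm:local_global_stalk_generation}: since Azumaya algebras are matrix algebras at each strict henselization, the required $(d+1)$-cone bound at stalks follows from Olander's commutative theorem, and one then globalises through the tensor action of $\operatorname{Perf}(\widetilde{X})$ on $D^b_{\operatorname{coh}}(\mathcal{A}_{\widetilde{X}})$. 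An alternative is to mimic Olander's Beĭlinson-type diagonal resolution directly in the Azumaya setting, using that $\widetilde{X}\times_{\mathsf{k}}\widetilde{X}$ is smooth and that $\mathcal{A}_{\widetilde{X}}\boxtimes\mathcal{A}_{\widetilde{X}}^{\operatorname{op}}$ is Morita-trivial along the diagonal.
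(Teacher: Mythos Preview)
There is a genuine gap in your reduction step. The claim ``Regularity of $\widetilde{X}$ ensures $\mathbb{L}\breve{g}^\ast E\in D^b_{\operatorname{coh}}(\mathcal{A}_{\widetilde{X}})$'' is false: regularity of the \emph{target} of $g$ says nothing about boundedness of $\mathbb{L}g^\ast$. A resolution of singularities is not flat, and for $E$ supported at a singular point (e.g.\ a skyscraper) the derived pullback typically has infinitely many nonvanishing cohomology sheaves---this is exactly detecting that $X$ is not regular. So you only get $\mathbb{L}\breve{g}^\ast E\in D^-_{\operatorname{coh}}(\mathcal{A}_{\widetilde{X}})$. The paper confronts this directly: one knows only that $\mathbb{R}\breve{g}_\ast\mathbb{L}\breve{g}^\ast E\in D^-_{\operatorname{coh}}(\mathcal{A})$, hence $E\in D^b_{\operatorname{coh}}(\mathcal{A})\cap\langle \mathbb{R}\breve{g}_\ast D^-_{\operatorname{coh}}(\mathcal{A}_{\widetilde{X}})\rangle_1$, and then \Cref{lem:nc_truncate_down_to_bounded} is invoked to truncate and conclude $E\in\langle \mathbb{R}\breve{g}_\ast D^b_{\operatorname{coh}}(\mathcal{A}_{\widetilde{X}})\rangle_1$. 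Without this truncation step your pushforward of a countable generating set for $D^b_{\operatorname{coh}}(\mathcal{A}_{\widetilde{X}})$ need not catch $E$ at all.

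Separately, your treatment of the smooth case is more circuitous than necessary and, as you yourself note, does not close. Neither \Cref{lem:generate_by_perfect_pushforwards} nor \Cref{thm:local_global_stalk_generation} controls the number of cones---both yield only classical generation---so neither route produces the uniform $(d+1)$-bound. The paper instead observes that over a regular Noetherian scheme the stalks of an Azumaya algebra have global dimension at most $\dim\widetilde{X}$ (Auslander--Goldman), and then Olander's argument applies verbatim to $\mathcal{A}_{\widetilde{X}}$: his proof only uses a uniform bound on the global dimension of the local rings together with affine diagonal, both of which hold here. This gives the countable Rouquier bound for $D^b_{\operatorname{coh}}(\mathcal{A}_{\widetilde{X}})$ directly, with no d\'evissage or stalk-to-global gymnastics.
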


\begin{proof}
    Let $f\colon Y \to X$ be a resolution of singularities. Then $Y$ is separated as $f$ is proper and $X$ is separated, in particular, $Y$ has affine diagonal. 
    Moreover, $\mathcal{A}_{Y}$ is an Azumaya algebra over a regular separated Noetherian scheme of finite Krull dimension, and so the global dimension of each stalk of $\mathcal{A}_{Y}$ is at most $\dim X$ (see e.g.\ \cite[Theorem 1.8]{Auslander/Goldman:1961}). 
    Hence, it follows from the proof of \cite[Theorem 4]{Olander:2023} that $D^b_{\operatorname{coh}}(\mathcal{A}_{Y})$ has countable Rouquier dimension at most $\dim X$, see also \cite[Remark 3.19]{DeDeyn/Lank/ManaliRahul:2024}. 
    Let $E$ be an arbitrary object in $D^b_{\operatorname{coh}}(\mathcal{A})$.
    By \Cref{prop:strict_form_nc_projection_formula} we have $\mathbb{R}\Breve{f}_\ast \mathbb{L} \Breve{f}^\ast E\cong \mathbb{R}f_\ast \mathcal{O}_{Y} \otimes^{\mathbb{L}}_{\mathcal{O}_X} E$. 
    As $X$ is a derived splinter, the natural map $\mathcal{O}_X \to \mathbb{R}f_\ast \mathcal{O}_{Y}$ splits in $D^b_{\operatorname{coh}}(X)$, and so $E$ is a direct summand of $\mathbb{R}\Breve{f}_\ast \mathbb{L} \Breve{f}^\ast E$. 
    Using \Cref{lem:nc_bounded_above} one sees that $\mathbb{R}\Breve{f}_\ast \mathbb{L} \Breve{f}^\ast E$ is an object of $D^{-}_{\operatorname{coh}}(\mathcal{A})$.
    Consequently, $E$ belongs to $\langle \mathbb{R}\Breve{f}_\ast D^b_{\operatorname{coh}}(\mathcal{A}_{Y}) \rangle_1$ by \Cref{lem:nc_truncate_down_to_bounded}. 
    Thus the countable Rouquier dimension of $D^b_{\operatorname{coh}}(\mathcal{A})$ is at most that of $D^b_{\operatorname{coh}}(\mathcal{A}_{Y})$, which completes proof.
\end{proof}

\section{Big vs.\ small generation---the elephant in the room}
\label{sec:big_vs_small_elephant}

In this final section, we discuss what we know---and do \emph{not} know---concerning the relation between `big and small generation'.
There are two forms of generation that arose above.
One can consider the existence of a strong generator of some chosen essentially small subcategory\footnote{Note that a triangulated category that is not essentially small, e.g.\ it is non-trivial with arbitrary small coproducts, can never admit a strong generator.}(=small generator) or the existence of a strong $\oplus$-generator with respect to some (essentially small) subcategory(=big generator). 
More precisely, fix a Noether nc.\ scheme $(X,\mathcal{A})$ (or more generally $X$ qcqs with $\mathcal{A}$ coherent as $\mathcal{O}_X$-module); above we looked at finiteness of either
\begin{displaymath}
    \dim \left(D^b_{\operatorname{coh}}(\mathcal{A})\right)\text{, i.e.\ the small}
\end{displaymath}
or of
\begin{displaymath}
    \dim_{\oplus}\left(D_{\operatorname{qc}}(\mathcal{A}), D^b_{\operatorname{coh}}(\mathcal{A})\right)\text{, i.e.\ the big}.
\end{displaymath}
Much of the literature focuses on small strong generation, as the corresponding categories are classically more studied/of more interest.
Yet, the existence of small strong generators is, especially recently, shown using `big techniques'.
In addition, our work above shows that big generators are relatively well-behaved with respect to descent for various topologies, whilst it is not immediately clear how to adapt the proofs to show the corresponding statements for small generators.
As big generators generally yield small generators, it is only naturally naive to hope that the two notions are the same.

Well let us just start with the bad news.
In general, one cannot expect the notions to be the same, as the following example shows.
\begin{example}[\cite{Stevenson:2025}]\label{ex:gregs}
    There exist commutative coherent rings $R$ of infinite global dimension where
    \begin{displaymath}
        D^b_{\operatorname{mod}}(\operatorname{Mod}(R))=D^b(\operatorname{mod}(R))=\operatorname{Perf}(R)
    \end{displaymath}
    admits a strong generator but for which $D(\operatorname{Mod}(R))$ does not admit a strong $\oplus$-generator with bounded finitely presented cohomology.
    Indeed, the existence of the latter would imply that the global dimension of $R$ is finite by \cite[Theorem A]{DeDeyn/Lank/ManaliRahul:2024}, but this contradicts the fact that $R$ has infinite global dimension.
\end{example}

However, there are settings where big and small generators are known to coincidence generally. 
Indeed, our work \cite[Theorem A and B]{DeDeyn/Lank/ManaliRahul:2024}, shows that, as long as $\mathcal{A}$ is Noetherian, a compact object of $D_{\operatorname{qc}}(\mathcal{A})$ is a strong $\oplus$-generator if and only if it is a strong generator of the subcategory of compact objects. 
So, in a \emph{Noether} (and `regular' i.e.\ when $D^b_{\operatorname{coh}}=\operatorname{Perf}$) setting big and small coincide for bounded coherent objects.
This raises the following natural question.
\begin{question}\label{question:nc_setup}
    Let $(X,\mathcal{A})$ be a Noether nc.\ scheme and fix some $G\in D^b_{\operatorname{coh}}(\mathcal{A})$.
    Is $G$ a strong generator for $D^b_{\operatorname{coh}}(\mathcal{A})$ if and only if it is a strong $\oplus$-generator for $D_{\operatorname{qc}}(\mathcal{A})$?
\end{question}
One implication (the `if') is always true and holds very generally (see e.g.\ \cite[Lemma 2.14]{DeDeyn/Lank/ManaliRahul:2024}).
The `only if' direction also holds, granted we already know the existence of some strong $\oplus$-generator with bounded and coherent cohomology.
However, it is unclear how to prove this direction without using the existence of this strong $\oplus$-generator.

A nice setting, under which our noncommutative set-up falls, for more generally understanding the connection between small and big generators is that of `approximable triangulated categories' as introduced by Neeman in \cite{Neeman:2021a}. Let us briefly recall some definitions, the following first appeared in \cite[Definition 0.21]{Neeman:2021b}.

\begin{definition}
    A triangulated category $\mathcal{T}$ with small coproducts is said to be \textbf{approximable} if there exists a compact generator $G$ for $\mathcal{T}$, a $t$-structure $\tau=(\mathcal{T}^{\leq 0}, \mathcal{T}^{\geq 0})$ on $\mathcal{T}$ and a positive integer $A$ satisfying
    \begin{enumerate}
        \item $G\in\mathcal{T}^{\leq A}$, $\operatorname{Hom}(G,\mathcal{T}^{\leq -A})=0$ and
        \item for each object $F$ in $\mathcal{T}^{\leq 0}$ there exists a distinguished triangle
        \begin{displaymath}
            E \to F \to D \to E[1]
        \end{displaymath}
        with $D\in\mathcal{T}^{\leq -1}$ and $E$ can be obtained from $\{G[i] : -A \leq i \leq A\}$ using only $A$ cones, direct summands, and small coproducts.
    \end{enumerate}
\end{definition}

Suppose $\mathcal{T}$ is such an approximable triangulated category (in particular, we assume by definition that it has all small coproducts). 
Let $\mathcal{T}^{-}_c$ denote the collection of objects $E\in\mathcal{T}$ such that for each non-negative integer $n$ there exists a distinguished triangle
\begin{displaymath}
    P \to E \to F \to G[1]
\end{displaymath}
with $P\in\mathcal{T}^c$ and $F\in\mathcal{T}^{\leq -n}$.
In addition, put $\mathcal{T}^b_c:=\mathcal{T}^{-}_c\cap\mathcal{T}^b$ where $\mathcal{T}^b := (\cup_{n=1}^\infty \mathcal{T}^{\leq n}) \cap (\cup^\infty_{n=1} \mathcal{T}^{\geq -n})$.

Consider the above in our noncommutative set-up, i.e.\ consider a Noether nc.\ scheme $(X,\mathcal{A})$ and take $\mathcal{T}=D_{\operatorname{qc}}(\mathcal{A})$ and $\tau$ the standard $t$-structure. 
This is approximable by \cite{DeDeyn/Lank/ManaliRahul:2024} with data including $\tau$. 
Moreover, we have $\mathcal{T}^b_c = D^b_{\operatorname{coh}}(\mathcal{A})$.
Thus an abstract way of formulating \Cref{question:nc_setup} (and more generally the types of generation questions we considered in this work) would be.

\begin{question}\label{question:abstract}
    Let $\mathcal{T}$ be an approximable triangulated category. 
    \begin{enumerate}
        \item When does $\mathcal{T}^b_c$ admit a strong generator?
        \item When does $\mathcal{T}$ admit a strong $\oplus$-generator from $\mathcal{T}^b_c$?
        \item How are these related? (Again one implication is clear.)
    \end{enumerate}
\end{question}

Of course, given \Cref{ex:gregs} it is necessary to add a satisfactory notion of `Noetherianity' to the hypotheses, as in full generality the last question will otherwise definitely be false.
Moreover, perhaps one can relax approximability to the more general notion of `weak approximability' in the sense of \cite{Neeman:2021b}.

\bibliographystyle{alpha}
\bibliography{mainbib} 
\end{document}